\newcommand{\mysubsubsection}[1]{\subsubsection*{\bfseries #1}}
\renewcommand{\tocsection}[3]{
  \indentlabel{\@ifnotempty{#2}{\ignorespaces#1 #2\quad}}\bfseries#3}
\renewcommand{\tocsubsection}[3]{
  \indentlabel{\@ifnotempty{#2}{\ignorespaces#1 #2\quad}}#3}
\newcommand\@dotsep{4.5}
\def\@tocline#1#2#3#4#5#6#7{\relax
  \ifnum #1>\c@tocdepth
  \else
    \par \addpenalty\@secpenalty\addvspace{#2}
    \begingroup \hyphenpenalty\@M
    \@ifempty{#4}{
      \@tempdima\csname r@tocindent\number#1\endcsname\relax
    }{
      \@tempdima#4\relax
    }
    \parindent\z@ \leftskip#3\relax \advance\leftskip\@tempdima\relax
    \rightskip\@pnumwidth plus1em \parfillskip-\@pnumwidth
    #5\leavevmode\hskip-\@tempdima{#6}\nobreak
    \leaders\hbox{$\m@th\mkern \@dotsep mu\hbox{.}\mkern \@dotsep mu$}\hfill
    \nobreak
    \hbox to\@pnumwidth{\@tocpagenum{\ifnum#1=1\bfseries\fi#7}}\par
    \nobreak
    \endgroup
  \fi}
\renewcommand\csname r@tocindent0\endcsname{0pt}
\def\l@subsection{\@tocline{2}{0pt}{2.5pc}{5pc}{}}
\newcounter{results}[section]
\theoremstyle{plain}
\newtheorem{theorem}[results]{Theorem}
\newtheorem{lemma}[results]{Lemma}
\newtheorem{proposition}[results]{Proposition}
\newtheorem{corollary}[results]{Corollary}
\theoremstyle{remark}
\newtheorem{remark}[results]{Remark}
\newtheorem{example}[results]{Example}
\theoremstyle{plain}
\newtheorem{definition}[results]{Definition}
\numberwithin{equation}{section}
\newcommand{\eps}{\ensuremath{\varepsilon}} 
\newcommand{\nchi}{{\raise.3ex\hbox{$\chi$}}}
\newcommand{\rmC}{\mathrm C}
\newcommand{\proj}{\ensuremath{\mathbf{pr}}} 
\renewcommand{\exp}{\operatorname{\mathsf{exp}}}
\DeclareMathOperator*{\argmin}{arg\,min} 
\newcommand{\intt}[1]{\operatorname{int}\left(#1\right)}
\newcommand{\restr}[1]{\lower3pt\hbox{$|_{#1}$}}
\newcommand{\weakto}{\ensuremath{\rightharpoonup}} 
\newcommand{\dom}{\ensuremath{\mathrm{D}}}
\newcommand{\scalprod}[2]{\ensuremath{\langle #1, #2\rangle}}
\newcommand{\la}{\langle}
\newcommand{\ra}{\rangle}
\newcommand{\floor}[1]{\left\lfloor #1 \right\rfloor}
\newcommand{\ceil}[1]{\left\lceil #1 \right\rceil}
\newcommand{\de}{\ensuremath{\,\mathrm d}}
\renewcommand{\d}{\mathrm d}
\newcommand{\sfe}{\mathsf e} 
\newcommand{\sfv}{\mathsf v} 
\newcommand{\sfx}{\mathsf x}
\newcommand{\N}{\mathbb{N}}
\renewcommand{\P}{\mathbb P} 
\newcommand{\R}{\mathbb{R}} 
\newcommand{\aalpha}{\boldsymbol\alpha}
\newcommand{\bb}{\boldsymbol b} 
\newcommand{\ii}{\boldsymbol i}
\newcommand{\eeta}{\boldsymbol \eta}
\newcommand{\ff}{\boldsymbol f}
\newcommand{\ggamma}{\boldsymbol\gamma}
\newcommand{\mmu}{\boldsymbol\mu}
\newcommand{\rrho}{\boldsymbol\rho}
\renewcommand{\ss}{\boldsymbol s}
\newcommand{\ssigma}{\boldsymbol\sigma} 
\newcommand{\ttheta}{\boldsymbol\vartheta}
\newcommand{\Bb}{\boldsymbol B} 
\newcommand{\fF}{{\boldsymbol F}}
\newcommand{\Gg}{\boldsymbol G}
\newcommand{\Llambda}{\boldsymbol\Lambda}
\newcommand{\Ttheta}{\boldsymbol\Theta} 
\newcommand{\rsqm}[1]{\mathsf m_2(#1)} 
\newcommand{\prob}{\ensuremath{\mathcal{P}}} 
\DeclareMathOperator{\supp}{supp}
\newcommand{\U}{\mathsf U} 
\newcommand{\X}{\mathsf X} 
\newcommand{\TX}{\mathsf {T\kern-1.5pt X}}
\newcommand{\UU}{\mathbb U} 
\newcommand{\interval}{\mathcal I} 
\newcommand{\bry}[1]{\boldsymbol b_{#1}}
\newcommand{\bri}[1]{\operatorname{bar}\left (#1\right )}
\newcommand{\bram}[2]{\ensuremath{\left [ #1, #2\right ]_{r}}}
\newcommand{\cB}{{\mathcal B}}
\newcommand{\frF}{{\boldsymbol{\mathrm F}}}
\newcommand{\Sp}[1]{\mathcal{S}\left(#1\right)}
\newcommand{\mmo}{{\boldsymbol B}}
\newcommand{\Sgp}{\boldsymbol S}
\newcommand{\Rest}{\mathsf R}
\newcommand{\finalstep}[2]{{\mathrm N(#1,#2)}}
\newcommand{\EVI}{{\rm EVI}\xspace} 
\newcommand{\IDF}{{\rm IDF}\xspace} 
\newcommand{\MPVF}{{\rm MPVF}\xspace} 
\newcommand{\SDF}{{\rm SDF}\xspace} 
\newcommand{\PVF}{{\rm PVF}\xspace}
\title[Stochastic Euler Schemes and Dissipative Evolutions in $\prob_2$]{Stochastic Euler Schemes and Dissipative Evolutions in the Space of Probability Measures}
\author{Giulia Cavagnari}
\address{Giulia Cavagnari: Politecnico di Milano, Dipartimento di Matematica, Piazza Leonardo Da Vinci 32, 20133 Milano (Italy)}
\email{giulia.cavagnari@polimi.it}
\author{Giuseppe Savar\'e}
\address{Giuseppe Savar\'e: Bocconi University,
  Department of Decision Sciences and BIDSA, Via Roentgen 1, 20136 Milano (Italy)}
\email{giuseppe.savare@unibocconi.it}
\author{Giacomo Enrico Sodini}
\address{Giacomo Enrico Sodini: Institut für Mathematik - Fakultät für Mathematik - Universität Wien, Oskar-Morgenstern-Platz 1, 1090 Wien (Austria)}
\email{giacomo.sodini@univie.ac.at}
\subjclass{Primary: 34A06, 47B44, 49Q22; Secondary: 34A12, 34A60}
 \keywords{Optimal transport, measure differential equations/inclusions in Wasserstein spaces, probability vector fields, dissipative operators, stochastic gradient descent.}
\begin{document}

\begin{abstract} We study the convergence of stochastic time-discretization schemes for evolution equations driven by random velocity fields, including examples like stochastic gradient descent and interacting particle systems. Using a unified framework based on Multivalued Probability Vector Fields, we analyze these dynamics at the level of probability measures in the Wasserstein space. Under suitable dissipativity and boundedness conditions, we prove that the laws of the interpolated trajectories converge to those of a limiting evolution governed by a maximal dissipative extension of the associated barycentric field. This provides a general measure-theoretic study for the convergence of stochastic schemes in continuous time.
\end{abstract}
\maketitle
\tableofcontents
\thispagestyle{empty}

\section{Introduction}
The aim of this paper is to introduce and develop a general framework for studying the convergence of trajectories generated by a suitable stochastic version of the explicit Euler method, which we use 
to approximate the evolution of interacting particle systems governed by dissipative fields in Hilbert spaces.

\subsubsection*{Superposition of autonomous vector fields.}
In order to keep the exposition simple, we first explain the main ideas  of our approach in the case of a simple system of ordinary differential equations in $\R^d$ driven by 
a continuous 
 vector field $b: \R^d \to \R^d$,
\begin{equation}\label{eq:ode}
\dot{x}(t) = b(x(t)), \quad t\in [0,T],  \quad x(0) = \bar{x} \in \R^d   
\end{equation}
satisfying, for some $\lambda\in\R$, a one-sided Lipschitz condition
\begin{equation}
\label{eq:one-sided-Lip}
        \la b(x_0)-b(x_1),x_0-x_1\ra\le \lambda 
    |x_0-x_1|^2\quad\text{for every }x_0,x_1\in \R^d.
\end{equation}
We suppose that $b$ arises as a \emph{stochastic} superposition of
a family of vector fields $g:\R^d\times \U\to\R^d$
depending on
a random parameter $u\in \U$ in  
a probability space $\U$ endowed with 
a probability measure $\mathbb{U}$:
\[ b(x)=
\int_\U g(x,u)\,\d\mathbb{U}(u)=
\mathbb{E}_{\mathbb U} [ g(x,\cdot)]\quad \text{for every } x \in \R^d,\]
with
\begin{displaymath}
    \int_\U |g(x,u)|^2\,\d\mathbb{U}(u)=
\mathbb{E}_{\mathbb U} \Big[ |g(x,\cdot)|^2\Big]
\le L(1+|x|^2).
\end{displaymath}
We can perform a natural time discretization of \eqref{eq:ode} by 
selecting a sequence of uniform step sizes $\tau=T/N$, $N\in \N$, and a sequence 
$(V^{n})_n $ 
 of independent,  $\U$-valued random variables 
 with law $\mathbb U$,
 defined in some probability space
$(\Omega,\P)$. A stochastic approximation $X^n_\tau$ of 
$x(n\tau)$ 
is a family of $\R^d$-valued random variables defined on $\Omega$, which
can then be obtained by the 
following version of the explicit Euler method
\begin{equation}\label{eq:scheme1}
X^{n+1}_\tau = X^n_\tau + \tau g(X^n_\tau, V^n), \quad n = 0,1,\cdots, N;
\quad
X^0_\tau:=\bar x.
\end{equation}
A special case of the above framework occurs when 
\begin{equation}
\label{eq:gradient}
    b =-\nabla h,\quad 
    h(x)=\frac{1}{K} \sum_{u=1}^K H_u(x)
\end{equation}
is the sum of gradients of functions $H_u: \R^d \to \R$. 
In this case, a deterministic approach to approximate \eqref{eq:ode} may be difficult to implement numerically when $K$ is large. For this reason, stochastic approximation techniques have been developed, inspired by the seminal works of Robbins and Monro \cite{RM51}  and of Kiefer and Wolfowitz \cite{KW52}, and they have witnessed a renewed interest, largely motivated by their growing relevance in machine learning contexts (cf. e.g. \cite{RHW86,Bottou10,MB11}). 
In this regard, 
the simplest stochastic formulation takes $\U = \{1, \dots, K\}$, $\mathbb{U}$ to be the uniform distribution on $\U$, and $g(x,u):= -\nabla H_u(x)$; the scheme above in \eqref{eq:scheme1} with $\tau$ fixed 
coincides then with the classical Stochastic Gradient Descent (SGD) method
and one aims to pass to the limit as $n\to\infty$ in order to find a minimum 
(or a stationary point) of $h$.
The effectiveness of this approach relies on the reduction of the computational cost at each iteration.

Instead of
the asymptotic limit as $n\to\infty$,
we want to study the uniform approximation
in $[0,T]$
of the deterministic trajectory of 
\eqref{eq:ode} as $\tau=T/N$ vanishes. 

\subsubsection*{Interacting particle systems}
In our analysis, we can include much more general 
evolution processes, as the ones involving interactions.
In the simplest case,
we may consider a system of $M$ particles $\{x^\omega\}_{\omega=1}^M$ and an 
interaction field $f: \R^d \times \R^d \to \R^d$ determining the velocity of each particle based on the positions of all particles in the system according to 
\begin{equation}
\label{eq:interacting-ode}
\dot{x}^\omega(t) =\frac{1}{M} \sum_{m=1}^M f(x^\omega (t), x^m(t)), \quad x^\omega (0)=\bar x^\omega ,\quad \omega =1,\cdots, M,
\quad t\in [0,T].
\end{equation}
More generally, we can assume that the initial configuration of the particles is described by the law of a random variable $\bar X$:
in this case, we can formulate the evolution of 
the system described by the random variables $X(t,\cdot)$ 
as 
\begin{equation}
    \label{eq:interacting-ode2}
    \dot X(t,\omega)=
    \int_\Omega f\big(X(t,\omega),X(t,\omega')\big)\,\d\mathbb P(\omega'),\quad
    X(0,\omega)=\bar X(\omega)
    \quad\text{for every }\omega\in \Omega.
\end{equation}
We can then consider the approximation scheme 
\begin{equation}\label{eq:scheme2}
X^{n+1}_\tau = X^n_\tau + \tau f(X^n_\tau, Y^{n}_\tau), \quad n=0,1,\cdots,N,
\end{equation}
where $Y^{n}_\tau$ is an independent copy of $X^n_\tau$ and it also independent
of $Y^{k}_\tau$, $k=0,\cdots,n-1$.
\medskip

For both schemes \eqref{eq:scheme1} and \eqref{eq:scheme2}, we can construct a piecewise linear (random) curve $X_\tau$ on the interval $[0,T]$ by linearly interpolating the values  $(X^n_\tau)_{n}$ at the nodes $n\tau$. A natural question arises regarding the convergence of $X_\tau$ as $\tau \downarrow 0$ to the deterministic solution
of 
\eqref{eq:ode}  
(cf. \cite{Benaim99,KY03})
and to that of \eqref{eq:interacting-ode}, \eqref{eq:interacting-ode2}. 

Of course, many variations of the two examples 
above can be considered—for example, allowing the field $g$ to depend on the law of $X^n_\tau$ or assuming that also the field $f$ in the second case arises as a superposition.  In fact, we interpret these evolutions as particular cases of a broader framework, which we use to analyze the behavior of $X_\tau$ as $\tau \downarrow 0$.

The general idea, developed in this paper, is to regard these evolutions as occurring in the Wasserstein space of probability measures $\prob_2(\R^d)$, focusing on the law of $X^n_\tau$ rather than on the random variable itself. We lift the vector fields $b$, $g$, and $f$ to corresponding Probability Vector Fields (\PVF), a notion of distributed velocity field acting on probability measures, introduced in \cite{Piccoli_2019,Piccoli_MDI,Camilli_MDE} and studied in our previous works \cite{CSS, CSS2grande}, also considering the multivalued case. For a comprehensive review of the literature on the study of evolution equations in the Wasserstein space of probability measures, particularly in relation to optimal control problems, we refer the reader to \cite{Piccoli-soa23}.

\subsubsection*{Evolution of probability measures driven by probability vector fields}
To illustrate this approach,  
let us consider the general case of 
the evolution of probability measures 
with bounded support driven by a 
continuous nonlocal vector field
$\boldsymbol b:\X \times \prob_b(\X)\to \X$:
here $\X=\R^d$ for simplicity,
(though we can also handle the case of an infinite-dimensional separable Hilbert space)
and $\prob_b(\X)$ denotes the space
of probability measures with bounded support.

We assume that $\boldsymbol{b}$ is $\lambda$-dissipative \cite{CSS}, i.e.
\begin{equation}
    \label{eq:tot-diss-intro}
    \int \langle \boldsymbol{b}(x_1,\mu_1)
    -\boldsymbol{b}(x_0,\mu_0),
    x_1-x_0\rangle\,\d\mmu(x_0,x_1)
    \le 
    \lambda 
    W_2^2(\mu_0,\mu_1) 
\end{equation}
for every $\mu_0,\mu_1\in \prob_b(\X)$ 
and for some optimal coupling $\mmu\in \Gamma_o(\mu_0,\mu_1)$ for the $L^2$-Wasserstein distance $W_2$,
a condition which is a natural metric-generalization to probability measures of the classical dissipativity (or anti-monotonicity) for operators in Hilbert spaces \cite{BrezisFR}.
By \cite[Theorem 4.2]{CSS2grande} 
for every initial datum $\bar\mu\in \prob_b(\X)$ we can find a unique solution
$\mu\in \rmC([0,T];\prob_b(\X))$
satisfying the continuity equation
\begin{equation}
    \label{eq:cont-eq-intro}
    \partial_t \mu_t +\nabla\cdot\big(\boldsymbol{b}(\cdot,\mu_t)\mu_t\big)=0
    \quad \text{in }(0,T)\times \X,\quad 
    \mu_{t=0}=\bar \mu.
\end{equation}
Selecting an initial random variable
$\bar X$ on a probability space $(\Omega, \P)$ such that $\bar X_\sharp\P=\bar \mu$ (here $\sharp$ is the push-forward operator), 
the measures $\mu_t$, $t\in [0,T]$, can be equivalently characterized as the laws
of the random variables $X(t,\cdot)$ satisfying the 
systems of nonlocal ODEs
\begin{equation}
    \label{eq:nonlocal-ODE-intro}
    \dot X(t,\omega)=
    \boldsymbol{b}\big(X(t,\omega),X(t,\cdot)_\sharp\P\big),\quad 
    X(0,\omega)=\bar X(\omega).
\end{equation}
We can then consider the characteristic map
$\mathrm X:\Omega\to \rmC([0,T];\X)$:
for (almost) every $\omega\in \Omega$, 
$\mathrm X(\omega)$ is the curve 
$t\mapsto X(t,\omega)$ i.e.~a characteristic 
of the PDE \eqref{eq:cont-eq-intro}.
The law $\eeta:=\mathrm X_\sharp\P$ 
is therefore a  probability measure in 
$\prob(\rmC([0,T];\X))$
concentrated on the characteristics of 
\eqref{eq:cont-eq-intro}.

In order to describe the stochastic 
approximation of 
\eqref{eq:nonlocal-ODE-intro}, 
we suppose that $\boldsymbol b$
can be represented as the barycenter
of a Probability Vector Field (PVF) 
$\frF$, which in this particular case 
is a map
from $\prob_b(\X)$ to 
$\prob_b(\X\times \X)
\cong \prob_b(\TX)$ (here $\TX\cong \X\times \X$ denotes the flat tangent bundle to $\X$, which can be interpreted as the natural space for 
the space-velocity pairs $(x,v)$),
satisfying the structural condition
\begin{equation}
    \label{eq:structure-intro}
    \pi^\X_\sharp \frF[\mu]=\mu
    \quad\text{for every $\mu\in \prob_b(\X)$,} 
\end{equation}
where $\pi^\X(x,v)=x$ is the projection on $\X$.
The disintegration $\frF[\mu]
=\int \Phi_x\,\d\mu(x)$
of $\frF[\mu]$ with respect to its first marginal $\mu$ 
yields a Borel family $(\Phi_x)_{x\in \X}$ of probability distributions on 
velocities $\Phi_x\in \prob_b(\X)$ 
at $\mu$-a.e. $x$, whose mean value is precisely $\boldsymbol b(x,\mu):$
\begin{equation}
    \boldsymbol{b}(x,\mu)=
    \int v\,\d\Phi_x(v)\quad\text{for $\mu$-a.e. $x\in \X$}.
\end{equation}

In the simple case \eqref{eq:ode}
we have 
\begin{equation}\label{eq:intrompvf}
 \boldsymbol b(x,\mu)=b(x),\quad
 \frF[\mu] = (\pi^\X, g)_\sharp (\mu \otimes \mathbb{U}), \quad \mu \in \prob_b(\X).   
\end{equation}
In the case of the interacting particle system
\eqref{eq:interacting-ode},
we have
\begin{displaymath}
    \boldsymbol b(x,\mu)=
    \int f(x,y)\,\d\mu(y),\quad 
    \frF[\mu]=(\boldsymbol i_\X,f)_\sharp 
    (\mu\otimes\mu),
\end{displaymath}
being $i_\X$ is the identity on $\X$.
Assuming that 
there exists a constant $a\ge0$ such that 
\begin{equation}
    \label{eq:bound-intro}
    \la v,x\ra\le a(1+|x|^2)
    \quad\text{for $\frF[\mu]$-a.e.~$(x,v)\in \TX$,}
\end{equation}
and for every $R>0$ 
the support of $\frF[\mu]$
is uniformly bounded whenever 
$\supp(\mu)\subset B_R(0),$
the theory developed in \cite{CSS} well applies to the evolution driven by the \PVF $\frF$. The latter can be approximated by  a measure-theoretic version of the Explicit Euler scheme: given 
a step size $\tau=T/N$ 
and an approximation $\bar \mu_\tau \in \prob_b(\X)$ of $\bar \mu$, we iteratively define
\begin{equation}\label{eq:EEFex1}
 M_\tau^0:=\bar\mu_\tau, \quad M_\tau^{n+1} = \exp^\tau_\sharp \frF[M_\tau^n],
 \quad n=0,1,\cdots, N-1,
 \end{equation}
where $\exp^\tau: \TX \to \X$ sends $(x,v)$ to $x + \tau v$. This scheme corresponds to the scheme \eqref{eq:scheme1}
(resp.~\eqref{eq:scheme2}) at the level of laws, indeed for each $n,\tau$ we have that $M_\tau^n$ is the law of the random variable $X^n_\tau$ in \eqref{eq:scheme1} (resp.~\eqref{eq:scheme2}).

In our earlier work \cite{CSS}, we studied the properties of the scheme \eqref{eq:EEFex1} and the uniform convergence of the corresponding piecewise-constant interpolants $t\mapsto M_\tau(t)$ in the space of measures. In particular, we are also able to characterize the limit curve $\mu\in\rmC([0,T];\prob_2(\X))$ in terms of an Evolution Variational Inequality associated with $\frF$.

Comparing this result with the theory in \cite[Theorems 4.2, 4.4]{CSS2grande} (cf.~Theorem \ref{prop1}), this immediately gives that
    $\mu$ 
coincides with the (unique) solution of 
\eqref{eq:cont-eq-intro}
generated by the 
barycenter $\boldsymbol b$ of $\frF$.

It is then natural to lift also the Explicit Euler scheme for $\frF$, in the form of its linear interpolation $M_\tau(\cdot)$, to a probability measure on curves, that we call $\eeta_\tau \in \prob(\rmC([0,T]; \X))$, such that its evaluation at times $n\tau$ matches $M_\tau^n$ (see Definition \ref{def:intEE} for details).
The measure $\eeta_\tau $
can be interpreted as the law
of the piecewise linear interpolation $X_\tau$ of 
a 
sequence $(X^n_\tau)_n$ 
of random variables 
satisfying the following properties:
\begin{itemize}
    \item 
    the law of $X^n_\tau$ is $M^n_\tau$:
    $(X^n_\tau)_\sharp \P=M^n_\tau$, $n=0,\cdots,N$,
    and $X^0_\tau\to \bar X$ in $L^2$
    as $\tau\downarrow0$,
    \item 
    $(X^n_\tau)_n$ is 
    a Markov chain,
    \item 
    denoting by $V^n_\tau$ the ``forward'' discrete velocity
    $\tau^{-1}(X^{n+1}_\tau-
X^n_\tau)$, the joint law 
of $\big(X^n_\tau,V^n_\tau\big)$ 
is precisely $\frF[M^n_\tau]$.
\end{itemize}
We are therefore led to study the convergence of $\eeta_\tau$, which corresponds to the convergence of the process $X_\tau$. 
Our main result is that 
$\eeta_\tau$ converges to $\eeta$ in the strong $L^2$-Wasserstein topology on $\prob_2(\rmC([0,T]; \X))$ (notice that, in this case, the convergence would not just be limited to the time evaluations, as already shown).

While compactness of the family $(\eeta_\tau)_\tau$ in the weak topology of probability measures 
follows from \cite{CSS} with the dissipativity and boundedness conditions, characterizing the limit is one main contribution of this paper; see in particular Theorem \ref{thm:main}. 
 In this result we show that the convergence takes place actually in the strong $L^2$-Wasserstein topology on $\prob_2(\rmC([0,T]; \X))$ and that the limit $\eeta$ is concentrated precisely on the solutions 
 to \eqref{eq:nonlocal-ODE-intro}.
 The tools we employ to characterize $\eeta$ are largely based on our second work \cite{CSS2grande} where we study, among other things, the properties of (multivalued) {\PVF}s enjoying a \emph{total dissipativity} condition.
 
 As a byproduct, we obtain the convergence 
 in $L^2(\Omega, \P; \rmC([0,T]; \X))$ of $X_\tau$
 to the unique solution of the deterministic ODE in \eqref{eq:nonlocal-ODE-intro} driven by $\boldsymbol b$
(Theorem \ref{prop:young}).
We will further extend the result described above in a much greater generality so to treat simultaneously also the case of multivalued and possibly discontinuous 
probability vector fields.
These results fully clarify the links between the explicit Stochastic Euler Method \eqref{eq:EEFex1}
induced by a metrically dissipative (multivalued) \PVF $\frF$ 
and the contraction semigroup generated by its totally dissipative barycentric 
projection, which in turn can be studied by
the implicit Euler Scheme. 

\quad \\
\textbf{Plan of the paper.} The paper is organized as follows. Section \ref{sec:Ecomp} contains the preliminaries on Wasserstein spaces, the main notations adopted, a recall of the definition of multivalued \PVF (\MPVF) and notions of dissipativity taken from \cite{CSS,CSS2grande}. In Section \ref{sec:EE}, we recall the Explicit Euler scheme for dissipative \MPVF{s} and construct a probabilistic representation of its affine interpolants. We then study its convergence as the step-size vanish. In Section \ref{sec:IE}, we study the stability of the probabilistic representation of the Implicit Euler scheme for a maximal totally dissipative \MPVF with respect to the initial data. Finally, our main results are contained in Section \ref{sec:final}, while illustrative examples appear in Section \ref{sec:examples}. Appendices \ref{sec:appA} and \ref{sec:sticky} close the work.

\mysubsubsection{Acknowledgments.} G.C. acknowledges the partial support of INDAM-GNAMPA project 2024 \emph{Controllo ottimo infinito dimensionale:
aspetti teorici ed applicazioni} (CUP\_E53C23001670001).

\section{Preliminaries}\label{sec:Ecomp}

We introduce here the main notations used throughout the paper.

Given $\lambda\in\R$, we write $\lambda_+:=\max\{0,\lambda\}$ and denote by $\floor{\cdot}$ the floor function. Given a set $X$, we denote by $\ii_X:X\to X$ the identity map on $X$.
Given $M\in\N$ and sets $X_0,\dots,X_M$, we define the \emph{$i$-th projection map} by
\[\pi^i:X_0\times X_1\times\dots\times X_M\to X_i,\quad \pi^i(x_0,\dots,x_M)=x_i,\]
with $i=0,\dots,M$; note that we start counting from the index $0$, this is done to be coherent with the time evaluations of curves defined in some interval of the form $[0,T]$. Similarly, we define
\[\pi^{i,j}:X_0\times X_1\times\dots\times X_M\to X_i\times X_j,\quad \pi^{i,j}(x_0,\dots,x_M)=(x_i,x_j),\]
for $i,j=0,\dots,M$.
\medskip

To ensure generality, we introduce the basic notions of measure theory considering $X$ and $Y$ to be Lusin completely regular topological spaces. Recall that a topological space $X$ is \emph{completely regular} if it is Hausdorff and, for every closed set $C$ and point $x \in X \setminus C$, there exists a continuous function $f: X \to [0,1]$ such that $f(C) = \{1\}$ and $f(x) = 0$. This framework is well-suited to our analysis of Borel probability measures on (subsets of) a separable Hilbert space $X$, which may carry either the strong or weak topology.

With $\prob(X)$ we denote the set of Borel probability measures on $X$ endowed with the weak/narrow topology, induced by the duality with $\rmC_b(X)$, the space of bounded continuous real-valued functions defined on $X$.

Given $\mu\in\prob(X)$ and a Borel map $r:X\to Y$, we define the \emph{push-forward} measure $r_\sharp\mu\in\prob(Y)$ by
\[\int_Y \varphi(y)\de(r_\sharp\mu)(y)=\int_X \varphi(r(x))\de\mu(x),\]
for any $\varphi:Y\to\R$ bounded Borel map.

For later use, we recall the \emph{disintegration theorem} (c.f. \cite[Theorem 5.3.1]{ags}).
\begin{theorem}\label{thm:disintegr}
Let $\mathbb X, X$ be Lusin completely regular topological spaces, $\mmu\in\prob(\mathbb{X})$, $r:\mathbb{X}\to X$ be a Borel map. Then, there exists a $(r_{\sharp}\mmu)$-a.e.~uniquely determined Borel family of probability measures $\{\mmu_x\}_{x\in X}\subset\prob(\mathbb{X})$ such that $\mmu_x(\mathbb{X}\setminus r^{-1}(x))=0$ for $(r_{\sharp}\mmu)$-a.e. $x\in X$, and
\[\int_{\mathbb{X}}\varphi(\boldsymbol{x})\de\mmu(\boldsymbol{x})=\int_X\left(\int_{r^{-1}(x)}\varphi(\boldsymbol{x})\de\mmu_x(\boldsymbol{x})\right)\de(r_{\sharp}\mmu)(x)\]
for every bounded Borel map $\varphi:\mathbb{X}\to\R$.
\end{theorem}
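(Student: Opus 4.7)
The plan is to reduce to a setting where classical arguments apply and then build the disintegration kernel through a Radon--Nikodym construction, upgrading the pointwise almost-everywhere statement to a Borel-regular one via a countable dense family.

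First I would reduce to the case where $\mathbb X$ and $X$ are Polish (indeed, even compact metrizable). Every Lusin completely regular space embeds as a universally measurable (in fact Borel, for the compact metric target) subset of a compact metrizable space; since $\mmu$ and $r_\sharp\mmu$ only see Borel sets, we can extend them by zero to the compactifications and work there, recovering the statement for the original spaces by noting that both measures concentrate on the original Borel subsets. From now on assume $\mathbb X, X$ compact metrizable with $\mmu\in\prob(\mathbb X)$ and $\nu:=r_\sharp\mmu\in\prob(X)$.

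Next I would construct, for each bounded Borel $\varphi:\mathbb X\to\R$, the signed measure $\lambda_\varphi$ on $X$ defined by $\lambda_\varphi(A):=\int_{r^{-1}(A)}\varphi\,\d\mmu$. Then $\lambda_\varphi\ll \nu$ and its Radon--Nikodym derivative $\Phi_\varphi:=\d\lambda_\varphi/\d\nu$ is a bounded Borel function on $X$, uniquely determined $\nu$-a.e., linear and positivity-preserving in $\varphi$. The key regularization step: pick a countable $\mathbb Q$-linear subalgebra $\mathcal D\subset\rmC(\mathbb X)$, containing $1$ and dense in $\rmC(\mathbb X)$ for the sup norm. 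For each $\varphi\in\mathcal D$ select a pointwise Borel version of $\Phi_\varphi$. The set $X_0\subset X$ on which, simultaneously for all $\varphi\in\mathcal D$, the assignment $\varphi\mapsto\Phi_\varphi(x)$ is linear, positive, sends $1$ to $1$ and satisfies $|\Phi_\varphi(x)|\le\sup|\varphi|$, has full $\nu$-measure (countably many null sets). On $X_0$ this functional extends uniquely by continuity to a positive unital functional on $\rmC(\mathbb X)$, whence by Riesz representation it is represented by a unique $\mmu_x\in\prob(\mathbb X)$. Setting $\mmu_x$ arbitrarily (e.g.~equal to a fixed reference measure) off $X_0$ yields a family $\{\mmu_x\}_{x\in X}$; Borel measurability of $x\mapsto\mmu_x$ with respect to the narrow topology follows because $x\mapsto\int\varphi\,\d\mmu_x=\Phi_\varphi(x)$ is Borel for every $\varphi\in\mathcal D$, hence for every $\varphi\in\rmC(\mathbb X)$, which characterizes Borel measurability into $\prob(\mathbb X)$.

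It then remains to check the concentration property $\mmu_x\big(\mathbb X\setminus r^{-1}(x)\big)=0$ and to extend the integral identity from continuous to bounded Borel test functions. For the concentration, fix a countable family of continuous functions $\{\psi_k\}$ on $X$ separating points; applying the identity $\int \psi_k(r(\boldsymbol x))\varphi(\boldsymbol x)\,\d\mmu=\int\psi_k(x)\Phi_\varphi(x)\,\d\nu(x)$ with $\varphi\equiv 1$ shows $\int\psi_k(r(\boldsymbol x))\,\d\mmu_x(\boldsymbol x)=\psi_k(x)$ for $\nu$-a.e.~$x$, and since $\{\psi_k\}$ separates points, this forces $r_\sharp\mmu_x=\delta_x$ on a common full-measure set, i.e.~$\mmu_x$ is concentrated on $r^{-1}(x)$. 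The extension to bounded Borel $\varphi$ is a standard monotone class argument. Uniqueness follows because any two kernels satisfying the stated identity give the same $\Phi_\varphi$ for every $\varphi\in\mathcal D$ outside a $\nu$-null set, hence define the same measure $\nu$-a.e.

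I expect the main obstacle to be exactly the passage from the pointwise a.e.~defined Radon--Nikodym derivatives $\Phi_\varphi$ (one null set per $\varphi$) to a single full-measure set on which $x\mapsto\mmu_x$ is \emph{simultaneously} a Borel map into $\prob(\mathbb X)$ with values concentrated on the fibers; this is what forces the reduction to a separable subalgebra $\mathcal D$ and the countable separation argument for concentration. The compact-metrizable reduction is what makes Riesz representation and separability available; if one tries to work directly in the Lusin category, one must appeal to Souslin measurable selection instead, which is a strictly heavier tool.
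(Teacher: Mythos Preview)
The paper does not give its own proof of this statement: it is recalled as the classical disintegration theorem and simply cited as \cite[Theorem 5.3.1]{ags}. Your sketch is essentially the standard proof that underlies that reference---Radon--Nikodym derivatives on a countable dense $\mathbb Q$-subalgebra of $\rmC(\mathbb X)$, Riesz representation to produce $\mmu_x$, a separating family $\{\psi_k\}$ to force $r_\sharp\mmu_x=\delta_x$, and a monotone class argument to pass from continuous to bounded Borel $\varphi$. These steps are correct.

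One point worth tightening is the reduction step. You assert that a Lusin completely regular space embeds as a Borel subset of a compact metrizable space; what one actually has directly is that a Lusin space is the continuous bijective image of a Polish space, and hence (by Lusin--Souslin) Borel \emph{isomorphic} to a Borel subset of a Polish space, which in turn sits as a $G_\delta$ in a compact metric space. Since your argument ultimately only needs the Borel structure (the topology on $\mathbb X$ enters only through Riesz, and after the Borel isomorphism you may use the topology of the model Polish space instead), this is enough---but the passage deserves a sentence, because the topological embedding you describe is not automatic. Similarly, when you ``extend $r$'' to the compactifications you should note that the extension is Borel precisely because the original spaces sit as Borel subsets of their compact models; that is exactly where the Lusin hypothesis is used.
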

\begin{remark}\label{rmk:disintegr}
When $\mathbb{X}=X_0\times X_1$ and $r$ is the projection $\pi^0$ on the first component, we can
identify the disintegration $\{\mmu_x\}_{x\in X_0} \subset
\prob(\mathbb{X})$ of $\mmu\in\prob(X_0\times X_1)$ w.r.t.~the map $\pi^0$
with a family of probability measures $ \{\mu_{x_0}\}_{x_0\in X_0} \subset
\prob(X_1)$. We write
$\mmu= \displaystyle\int_{X_0}\mu_{x_0}\,\d ({\pi^0}_{\sharp}\mmu)(x_0)$.
\end{remark}

The set of admissible couplings (transport plans) between two probability measures $\mu\in\prob(X)$, $\nu\in\prob(Y)$ is given by
\[\Gamma(\mu,\nu):= \left  \{ \ggamma \in \prob(X \times Y) \mid {\pi^{0}}_{\sharp} \ggamma = \mu \, , \, {\pi^{1}}_{\sharp} \ggamma = \nu \right \}.\]
If $\ggamma\in\prob(X\times Y)$, we say that the measures ${\pi^0}_\sharp\ggamma$ and ${\pi^1}_\sharp\ggamma$, are the first and second marginals of $\gamma$, respectively.

If $(X,|\cdot|)$ is a separable normed space, given $\mu\in\prob(X)$ we define its \emph{$2$-moment} by
\[\rsqm \mu :=\left( \int_\X |x|^2 \de \mu(x)\right)^{1/2},\]
and denote by $\prob_2(X)$ the set of all measures in $\prob(X)$ with finite $2$-moment.
\medskip

If $(X,|\cdot|)$ is a complete and separable normed space,
we denote by $W_2$ the Wasserstein distance over $\prob_2(X)$, defined by
\begin{align} \label{eq:w21} W_2^2(\mu, \mu') &:= \inf \left \{ \int_{X \times X} |x-y|^2 \de \ggamma(x,y) \mid \ggamma \in \Gamma(\mu, \mu') \right \},
\end{align}
and refer to \cite[\S 7]{ags} for the main properties of the complete and separable metric space $(\prob_2(X),W_2)$.

We denote by $\Gamma_o(\mu, \mu')$ the subset of couplings in $\Gamma(\mu, \mu')$ realizing the infimum in \eqref{eq:w21}.
\medskip

In the sequel, $\X$ denotes a separable Hilbert space with
norm $|\cdot|$ and scalar product $\la\cdot,\cdot\ra$.
In case $X=\rmC(\interval;\X)$ is endowed with the uniform norm $\|\cdot\|_\infty$ and $\interval\subset\R$ is a compact interval, we use the notation $W_{2,\infty}$ to denote the Wasserstein distance over $\prob_2(X)$.
\medskip

We denote by $\TX$ the tangent bundle to $\X$, identified with the cartesian product $\X\times\X$
with the induced norm $|(x,v)|:=\big(|x|^2+|v|^2\big)^{1/2}$ and the 
strong-weak topology, i.e.~the product of the strong topology on the first component and the weak topology on the second one. The set $\prob(\TX)$ is defined thanks to the identification of
$\TX$ with $\X\times \X$ and it is endowed with
the narrow topology induced by the strong-weak topology in $\TX$. Given $\Phi \in \prob(\TX)$,   we define the partial $2$-moment of $\Phi$ as
\[|\Phi|_2:=\left( \int_{\TX} |v|^2 \de \Phi(x,v)\right)^{1/2}.\]

Following the characterization result provided in \cite[Section 2.1]{CSS2grande}, we give the following definition of convergence in $\prob_2^{sw}(\TX)$.
\begin{definition}\label{def:sw}
    Given $(\Phi_n)_{n \in\mathbb N}\subset \prob_2(\TX)$, $\Phi \in \prob_2(\TX)$, we say that $\Phi_n\to\Phi$ in $\prob_2^{sw}(\TX)$ as $n\to\infty$ if all the following are satisfied
      \begin{enumerate}
      \item $\Phi_n\to\Phi$ in $\prob(\TX)= \prob(\X^s\times \X^w)$, where $\X^s$ is endowed with the strong topology, while $\X^w$ with the weak,
      \item $\displaystyle \lim_{n \to + \infty}\int |x|^2\,\d\Phi_n(x,v)=\int |x|^2\,\d\Phi(x,v)$,
      \item $\displaystyle \sup_{n} |\Phi_n|_2<\infty$.
\end{enumerate}
\end{definition}

\medskip

In $\TX$ we will use the following notation for projection maps: we set
\begin{align*}
\sfx:\TX\to\X,\quad\sfx(x,v):=x;&\qquad\sfv:\TX\to\X,\quad\sfv(x,v)=v;\\
\sfx^i:\TX\times\TX\to\X,\quad\sfx^i(x_0,v_0,x_1,v_1):=x_i;&\qquad\sfv^i:\TX\times\TX\to\X,\quad\sfv^i(x_0,v_0,x_1,v_1)=v_i;
\end{align*}
for $i=0,1$. Given $\interval\subset\R$ an interval of $\R$ and $t\in\interval$, we define the \emph{exponential map} $\exp^t:\TX\to\X$,
\[\exp^t(x,v)=x+tv,\]
and the \emph{evaluation map} $\sfe_t:\rmC(\interval;\X)\to\X$ as
\[\sfe_t(\gamma)=\gamma(t).\]

\subsection{Dissipative Probability Vector Fields and \EVI solutions}
From now on, $\X$ denotes a separable Hilbert space.
We recall here the definition of \MPVF and different notions of dissipativity: the metric dissipativity introduced in \cite{CSS}, where we provide well-posedness results of the associated evolution equation in the Wasserstein space, using a measure-theoretic Explicit Euler scheme; the total dissipativity, employed in \cite{CSS2grande} to deal with an Implicit Euler scheme.
\begin{definition}[Multivalued Probability Vector Field - \MPVF] \label{def:MPVF}
  A \emph{multivalued probability vector field} $\frF$ is a nonempty subset of
  $\prob_2(\TX)$ with domain $\dom(\frF) := \sfx_\sharp(\frF)=
  \{ \sfx_\sharp\Phi:\Phi\in \frF \}$.
  Given $\mu \in \prob_2(\X)$, we define the \emph{section} $\frF[\mu]$
  of $\frF$ as 
  \[ \frF[\mu] := \left \{ \Phi \in \frF
      \mid \sfx_{\sharp}\Phi = \mu \right \}. \]
\end{definition}

Given $\Phi\in\prob_2(\TX)$, the \emph{barycenter of $\Phi$} is the function $\bry\Phi\in L^2(\X,\sfx_\sharp\Phi;\X)$ defined by 
\begin{equation}\label{eq:bry}
\bry\Phi(x):=\int v\,\d\Phi(x,v)=\int v\,\d\Phi_x(v),
\end{equation}
where $\{\Phi_x\}_{x\in\X}\subset\prob_2(\X)$ is the disintegration of $\Phi$ w.r.t. $\sfx_\sharp\Phi$.

\begin{definition}\label{def:baryc}
Let $\frF\subset\prob_2(\TX)$ be a \MPVF. We define the \MPVF $\bri\frF$ by
\begin{equation}\label{eq:brimpvf}
\bri\frF[\mu]:= \left \{ \bri\Phi:=(\ii_\X, \bry\Phi)_\sharp\mu : \Phi \in \frF[\mu] \right \}, \quad \mu \in \dom(\frF).
\end{equation} 
\end{definition}

\medskip

Before we recall the metric and total notions of dissipativity for a \MPVF $\frF$,
we briefly recall the definition of duality pairings between measures in $\prob_2(\TX)$.

\begin{definition}[Metric-duality pairings] \label{def:pairings} For every $\Phi_0, \Phi_1 \in \prob_2(\TX)$, $\mu_1 \in \prob_2(\X)$, we set
\begin{align*}
    \Lambda(\Phi_0, \mu_1)&:= \left \{ \ssigma \in \Gamma(\Phi_0, \mu_1)
    \mid
    (\sfx^0,\sfx^1)_{\sharp}\ssigma \in  \Gamma_o(\sfx_\sharp \Phi_0, \mu_1) \right
      \},\\
      \Lambda(\Phi_0, \Phi_1)&:= \left \{ \Ttheta \in \Gamma(\Phi_0, \Phi_1)
                         \mid (\sfx^0,\sfx^1)_{\sharp} \Ttheta\in
                          \Gamma_o(\sfx_\sharp \Phi_0, \sfx_\sharp \Phi_1)
    \right \},
  \end{align*}
  where, with a slight abuse of notation, we used $\sfx^1$ to be also the projection map $\sfx^1(x_0,v_0,x_1)=x_1$ acting on $\TX\times\X$.
  We set
  \begin{align*}
    \bram{\Phi_0}{\mu_1} &:= \min \left \{ \int_{\TX \times
                                   \X} \scalprod{x_0-x_1}{v_0} \de \ssigma
                                   \mid \ssigma \in \Lambda(\Phi_0,
                                   \mu_1) \right \}, \\
\bram{\Phi_0}{\Phi_1} &:= \min \left \{ \int_{\TX \times \TX} \scalprod{x_0-x_1}{v_0-v_1} \de \Ttheta \mid \Ttheta \in \Lambda(\Phi_0, \Phi_1) \right \}.
  \end{align*}
\end{definition}

\begin{definition}[Metric $\lambda$-dissipativity] \label{def:dissipative}
  A \MPVF $\frF \subset \prob_2(\TX)$ is (metrically) \emph{$\lambda$-dissipative},
  $\lambda\in \R$, if
    \begin{equation}
      \bram{\Phi_0}{\Phi_1} \le \lambda W_2^2(\sfx_\sharp\Phi_0,\sfx_\sharp \Phi_1)
      \quad \text{ for every }\,\Phi_0,\Phi_1\in \frF.
  \label{eq:33}
\end{equation}
In case $\lambda=0$, we say that $\frF$ is dissipative.
\end{definition}

\begin{definition}[Total $\lambda$-dissipativity]
    \label{def:total-dissipativity}
    We say that a \MPVF $\frF \subset \prob_2(\TX)$ is \emph{totally $\lambda$-dissipative}, $\lambda\in\R$, if 
    for every $\Phi_0,\Phi_1\in \frF$
    and every $\boldsymbol \vartheta\in \Gamma(\Phi_0,\Phi_1)$ we have
    \begin{equation}
        \label{eq:total-dissipativity}
        \int \langle v_1-v_0,x_1-x_0
        \rangle \,\d\boldsymbol\vartheta(x_0,v_0,x_1,v_1)\le \lambda \int |x_1-x_0|^2\d\boldsymbol\vartheta(x_0,v_0,x_1,v_1).
    \end{equation}
    We say that $\frF$ is \emph{maximal totally $\lambda$-dissipative} if it is 
    maximal in the class of totally $\lambda$-dissipative \MPVF{s}: if $\frF'\supset \frF$
    and $\frF'$ is totally $\lambda$-dissipative, then $\frF'=\frF.$
\end{definition}

We recall the definition of $\lambda$-\EVI solution for a \MPVF. Here, $\interval$ denotes an arbitrary (bounded or unbounded) interval of $\R$.
\begin{definition}[$\lambda$-Evolution Variational Inequality] \label{def:evi}
  Let $\frF$ be a \MPVF and let $\lambda \in \R$.
  We say that a continuous curve $\mu: \interval  \to \overline{\dom(\frF)}$
  is a \emph{$\lambda$-\EVI solution} (in $\interval$) for the \MPVF $\frF$ if 
\begin{equation}\label{eq:EVIdef}
    \begin{aligned}
      \frac12\frac\d{\d t} W_2^2(\mu_t,\sfx_\sharp \Phi)
      &\le \lambda W_2^2(\mu_t,\sfx_\sharp \Phi)-
      \bram{\Phi}{\mu_t} \text{ in } \mathscr{D}'(\intt{\interval}) \text{ for every } \Phi \in \frF,
    \end{aligned}
  \end{equation}
 where when we write $\mathscr{D}'(\intt{\interval})$ it means that the expression holds in the distributional sense in $\intt{\interval}$. Equivalently, \eqref{eq:EVIdef} can be rephrased as 
 \begin{equation*}
    \begin{aligned}
      \frac12\frac\d{\d t}^+ W_2^2(\mu_t,\sfx_\sharp \Phi)
      &\le \lambda W_2^2(\mu_t,\sfx_\sharp \Phi)-
      \bram{\Phi}{\mu_t} \text{ for every $t\in\intt{\interval}$ } \text{ for every } \Phi \in \frF,
    \end{aligned}
  \end{equation*}
  where $\frac\d{\d t}^+$ denotes the right upper Dini derivative.
\end{definition}

\section{Explicit Euler scheme and lifting to curves}\label{sec:EE}

We recall here below the definition of Explicit Euler scheme for a \MPVF $\frF \subset \prob_2(\TX)$ coming from \cite{CSS} and provide a construction of a probabilistic representation of it with a probability measure on continuous paths valued in $\X$.

\begin{definition}[Explicit Euler Scheme]\label{def:EEscheme}
  Let $\frF\subset\prob_2(\TX)$ be a \MPVF and suppose we are given
  a step size $\tau>0$, an initial datum $\bar \mu\in\dom(\frF)$,
  a bounded interval $[0,T]$, corresponding to
  the final step $\finalstep T\tau:=\ceil{T/\tau},$
  and a stability bound $L>0$.
A sequence
  $(M^n_\tau,\Phi_\tau^n)_{0\le n\le \finalstep T\tau}\subset \dom(\frF)\times \frF $ is a
  \emph{$L$-stable solution to the Explicit Euler Scheme in $[0,T]$ starting from $\bar \mu$} if 
\begin{equation}
  \label{eq:EE}
  \tag{EE}
  \left\{
\begin{aligned}
  M_{\tau}^0& = \bar\mu ,\\
  \Phi_\tau^n &\in \frF[M_{\tau}^n],\
  |\Phi_\tau^n|_2 \le L
  &&0\le n<\finalstep T\tau,\\
  M_{\tau}^{n} &= (\exp^{\tau})_{\sharp} \Phi_\tau^{n-1} 
  &&1\le n\le \finalstep T\tau.
\end{aligned}
\right.
\end{equation}
\end{definition}

\begin{definition}[Single-step and multi-step plans]\label{def:step-plans}
Let $\tau>0$.
\begin{itemize}
\item Given $\Phi\in\prob(\TX)$, we define the \emph{single-step plan associated with $\Phi$} as
\[T_\tau\equiv T_\tau(\Phi):=(\sfx,\exp^\tau)_\sharp\Phi\in\prob(\X^2),\]
so that $(\pi^0)_\sharp {T_\tau}=\sfx_\sharp\Phi$. By disintegration theorem, there exists a $\left[\sfx_\sharp\Phi\right]$-a.e.~uniquely determined Borel family of probability measures $\{T_{\tau,x}\}_{x\in\X}\subset\prob(\X)$ such that
    \[ T_\tau = \int_{\X} ( \delta_{x} \otimes T_{\tau,x}) \de \left[\sfx_\sharp\Phi\right](x).\]
    \item Given $N\in\N$ and a family $(\Phi^n)_{0\le n\le N}\subset\prob(\TX)$, we denote the single-step plans associated with each $\Phi^n$ by $T_\tau^n:=T_\tau(\Phi^n)$, $n\in\{0,\dots,N-1\}$, and define by recursion
\begin{align*}
\aalpha_\tau^1&:=T_\tau^0,\\
\aalpha_\tau^n&:=\int ( \delta_{(x_0,x_1,\dots,x_{n-1})} \otimes T_{\tau,x_{n-1}}^{n-1}) \de \aalpha_\tau^{n-1}(x_0,x_1,\dots,x_{n-1})\in\prob(\X^{n+1}),
\end{align*}
for $2\le n\le N$. The \emph{multi-step plan associated with $(\Phi^n)_{0\le n\le N}$} is defined by
\[\aalpha_\tau\equiv\aalpha_\tau\left((\Phi^n)_n\right):=\aalpha_\tau^N\in\prob(\X^{N+1}).\]
\end{itemize}
\end{definition}

\begin{lemma}\label{lem:Ralpha}
Given $N,n\in\N$, $n\le N$, we define the restriction map $\Rest_n:\X^{N+1}\to\X^{n+1}$ as \[\Rest_n(x_0,\dots,x_N)=(x_0,\dots,x_n).\]
Let $\tau>0$ and $(\Phi^n)_{0\le n\le N}\subset\prob(\TX)$. With the notations of Definition \ref{def:step-plans}, we have
\begin{enumerate}
\item\label{item:Ralpha} $(\Rest_n)_\sharp\aalpha_\tau=\aalpha_\tau^n$;
\item\label{item:evalalpha} $(\pi^n)_\sharp\aalpha_\tau^n=\sfx_\sharp\Phi^n$,
\end{enumerate}
for any $n\in\{1,\dots,N\}$, where we recall that $\pi^n:\X^{n+1}\to\X$ is the projection map defined as $\pi^n(x_0,\dots,x_n)=x_n$.
\end{lemma}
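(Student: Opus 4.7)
Both statements are straightforward inductive consequences of the recursive construction of $\aalpha_\tau^n$, so the plan is essentially to unwind the definitions carefully. I would prove the two assertions by induction and in the order (1) then (2), using the disintegration of $T_\tau^{n-1}$ w.r.t.~its first marginal in the recursion step.

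\emph{Part (1): $(\Rest_n)_\sharp\aalpha_\tau=\aalpha_\tau^n$.} I would proceed by downward induction on $n$, observing that $\Rest_N$ is the identity on $\X^{N+1}$ (so the base case $n=N$ is trivial) and that the restriction maps factor as successive ``drop the last coordinate'' maps $D_m:\X^{m+1}\to\X^m$. It therefore suffices to show $(D_{n+1})_\sharp \aalpha_\tau^{n+1}=\aalpha_\tau^n$ for every $n<N$. Using the defining recursion
\[\aalpha_\tau^{n+1}=\int \bigl(\delta_{(x_0,\dots,x_n)}\otimes T_{\tau,x_n}^{n}\bigr)\de\aalpha_\tau^{n}(x_0,\dots,x_n),\]
and that $T_{\tau,x_n}^n$ is a probability measure on $\X$, pushing forward by $D_{n+1}$ reduces the inner $\otimes T_{\tau,x_n}^n$ to a factor of total mass $1$, leaving $\int \delta_{(x_0,\dots,x_n)}\de\aalpha_\tau^n=\aalpha_\tau^n$. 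Composing these one-step reductions yields $(\Rest_n)_\sharp\aalpha_\tau=\aalpha_\tau^n$.

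\emph{Part (2): $(\pi^n)_\sharp\aalpha_\tau^n=\sfx_\sharp\Phi^n$.} I would argue by forward induction on $n$. For $n=1$, the identity is immediate from $\aalpha_\tau^1=T_\tau^0=(\sfx,\exp^\tau)_\sharp\Phi^0$ together with the underlying compatibility $\exp^\tau_\sharp\Phi^{n-1}=\sfx_\sharp\Phi^n$ coming from the Explicit Euler step \eqref{eq:EE}. For the inductive step, I would apply $\pi^n$ to the recursive formula for $\aalpha_\tau^n$: since $\pi^n$ only ``sees'' the last coordinate, the Dirac part $\delta_{(x_0,\dots,x_{n-1})}$ collapses and one obtains
\[(\pi^n)_\sharp\aalpha_\tau^n=\int T_{\tau,x_{n-1}}^{n-1}\de\bigl((\pi^{n-1})_\sharp\aalpha_\tau^{n-1}\bigr)(x_{n-1}).\]
By the inductive hypothesis the outer measure equals $\sfx_\sharp\Phi^{n-1}$, which is precisely the first marginal of $T_\tau^{n-1}$ with respect to which $\{T_{\tau,x_{n-1}}^{n-1}\}_{x_{n-1}}$ is the disintegration of $T_\tau^{n-1}=(\sfx,\exp^\tau)_\sharp\Phi^{n-1}$. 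The integral therefore recomposes to $(\pi^1)_\sharp T_\tau^{n-1}=\exp^\tau_\sharp\Phi^{n-1}$, which by the Explicit Euler step coincides with $\sfx_\sharp\Phi^n$, completing the induction.

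\emph{Main obstacle.} There is no deep obstacle here; the only point requiring care is the bookkeeping between the recursive definition of $\aalpha_\tau^n$ and the disintegration of $T_\tau^{n-1}$ relative to its first marginal $\sfx_\sharp\Phi^{n-1}$. In particular, one needs $(\pi^{n-1})_\sharp\aalpha_\tau^{n-1}=\sfx_\sharp\Phi^{n-1}$ to ensure that the conditional measures $T_{\tau,x_{n-1}}^{n-1}$ are evaluated only where they are uniquely determined, which is exactly what the induction guarantees.
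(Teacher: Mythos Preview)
Your proposal is correct and follows essentially the same approach as the paper: item~(1) is immediate from the recursive construction (the paper dismisses it in one sentence, you spell out the one-step ``drop last coordinate'' reduction), and item~(2) is proved by forward induction using the disintegration of $T_\tau^{n-1}$ together with the Euler compatibility $\exp^\tau_\sharp\Phi^{n-1}=\sfx_\sharp\Phi^n$, exactly as the paper does via test functions. The only caveat, which applies equally to the paper's own proof, is that this compatibility is not part of the lemma's hypotheses as stated (the $(\Phi^n)_n$ are generic in Definition~\ref{def:step-plans}); you are right to flag it explicitly.
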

\begin{proof}
Item \eqref{item:Ralpha} is immediate by definition. To prove item \eqref{item:evalalpha}, we notice that
\[(\pi^1)_\sharp T_\tau^n=\sfx_\sharp\Phi^{n+1},\quad\text{for any }n\in\{0,\dots,N-1\},\]
and that $(\pi^1)_\sharp\aalpha_\tau^1=(\pi^1)_\sharp T_\tau^0=\sfx_\sharp\Phi^1$.
By induction, assume that item \eqref{item:evalalpha} holds for $n\in\{1,\dots,N-1\}$ and let us prove it for $n+1$. For any bounded Borel function $f:\X\to\R$, we have
\begin{align*}
\int f(x)\de \left((\pi^{n+1})_\sharp\aalpha_\tau^{n+1}\right)(x)&=\int f(x_{n+1})\de \aalpha_\tau^{n+1}(x_0,\dots,x_{n+1})\\
&=\int f(x_{n+1})\de T_{\tau,x_n}^n(x_{n+1}) \de\aalpha_\tau^n(x_0,\dots,x_n)\\
&=\int f(x_{n+1})\de T_{\tau,x_n}^n(x_{n+1}) \de \left((\pi^n)_\sharp \aalpha_\tau^n\right)(x_n)\\
&=\int f(x_{n+1})\de T_{\tau,x_n}^n(x_{n+1}) \de \left[\sfx_\sharp\Phi^n\right](x_n)\\
&=\int f(x_{n+1})\de T_\tau^n(x_n,x_{n+1})\\
&=\int f(x_{n+1})\de \left[\sfx_\sharp\Phi^{n+1}\right](x_{n+1}),\\
\end{align*}
thus proving item \eqref{item:evalalpha}.
\end{proof}

\begin{definition}[Interpolations of \eqref{eq:EE}]\label{def:intEE}
Let $\frF \subset \prob_2(\TX)$ be a \MPVF, $\bar\mu \in \dom(\frF)$ and $\tau, T, L>0$. Let $(M^n_\tau,\Phi_\tau^n)_{0\le n\le \finalstep T\tau}\subset \dom(\frF)\times \frF$ be as in Definition \ref{def:EEscheme} and $\aalpha_\tau\in\prob(\X^{\finalstep T\tau+1})$ be the multi-step plan associated with $\left(\Phi_\tau^n\right)_n$ as in Definition \ref{def:step-plans}. We define the following interpolations of the sequence $(M^n_\tau,\Phi_\tau^n)_n$:
\begin{align}
\label{eq:affineM}
  M_{\tau}(t) &:= (\exp^{t-n\tau})_{\sharp} \Phi_\tau^n\in\prob_2(\X) \text{\quad if } t \in [n\tau, (n+1)\tau]
  \text{ for some } n \in \N,\ 0\le n<\finalstep T\tau,\\
\fF_{\tau}(t) &:= \Phi_\tau^{\floor{t/\tau}}\in\prob_2(\TX), \quad t\in [0,T].\label{eq:piecewiseVel}   
\end{align}

Denote by $G:\X^{\finalstep T\tau+1}\to\rmC([0,T]; \X)$ the map sending points $(x_0, x_1, \dots, x_\finalstep T\tau)$ to their affine interpolation, i.e. to the curve $\gamma$ given by
    \[ \gamma(t):= \frac{1}{\tau}\left((n+1)\tau-t\right) x_n+\frac{1}{\tau}(t-n\tau) x_{n+1},\]
if $t \in [n\tau, (n+1)\tau]\cap[0,T]$, $0\le n<\finalstep T\tau$. We define the \emph{selected probabilistic representation of the affine interpolation of $(M^n_\tau,\Phi_\tau^n)_n$} as
\begin{equation}\label{eq:eeta_tauEE}
\eeta_\tau:= G_\sharp \aalpha_\tau \in \prob(\rmC([0,T]; \X)).
\end{equation}

We define the following (possibly empty) sets
\begin{equation}\label{eq:setsEM}
\begin{split}
\mathscr M(\bar\mu,\tau,T,L)&:=\Big\{M_\tau\mid M_\tau \text{ is the curve given by }\eqref{eq:affineM}\Big\},\\
\mathscr R(\bar\mu,\tau,T,L)&:=\Big\{\eeta_\tau\mid \eeta_\tau \text{ is the measure given by }\eqref{eq:eeta_tauEE}\Big\},\\
\mathscr E(\bar\mu,\tau,T,L)&:=\Big\{(M_\tau,\fF_\tau)\mid M_\tau, \fF_\tau \text{ are the curves given by }\eqref{eq:affineM},\eqref{eq:piecewiseVel}\text{ respectively}\Big\},\\
\mathscr T(\bar\mu,\tau,T,L)&:=\Big\{(M_\tau,\fF_\tau,\eeta_\tau)\mid M_\tau, \fF_\tau,\eeta_\tau \text{ are given by }\eqref{eq:affineM},\eqref{eq:piecewiseVel},\eqref{eq:eeta_tauEE}\text{ respectively}\Big\}.
\end{split}
\end{equation}

\end{definition}

In the following, a frequently requested assumption is the \emph{solvability} of the Explicit Euler scheme at a given $\bar\mu\in \dom(\frF)$. Meaning that, for some $\bar\mu \in \dom(\frF)$, $T,L>0$ and a vanishing sequence of step sizes $\tau(h) \downarrow 0$, we require the sets $\mathscr E(\bar\mu,\tau(h),T,L)$ to be not empty for every $h \in \N$. Sufficient conditions ensuring this property are given e.g. in \cite[Proposition 5.20]{CSS}. 

We recall that the solvability of the Explicit Euler Scheme \eqref{eq:EE}, together with the $\lambda$-dissipativity of $\frF$, provides existence and uniqueness of $\lambda$-\EVI solutions (cf. \cite[Theorem 5.9(3)-(4)]{CSS}).

\begin{theorem} \label{thm:apriori-estimate} Let $\frF$ be a
  $\lambda$-dissipative \MPVF according to \eqref{eq:33}. If $T,L>0$, $\N\ni h\mapsto \tau(h)\in\R_+$ is a vanishing sequence of
  time steps, $\bar\mu\in \dom(\frF)$ and
  $M_h\in \mathscr M(\bar\mu,\tau(h),T,L)$,
  then $M_h$ is uniformly converging to a Lipschitz continuous limit curve
  $
  \mu:[0,T]\to\overline{\dom(\frF)}$ which
  is the unique $\lambda$-\EVI solution in $[0,T]$ for $\frF$
  starting from $\bar\mu$.
\end{theorem}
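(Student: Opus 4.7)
The statement is essentially the conclusion of \cite[Theorem 5.9(3)-(4)]{CSS} restated for the affine interpolants of Definition \ref{def:intEE}, so the plan is to reduce to that reference by verifying that the piecewise-affine $M_\tau$ enjoys the same estimates used there for the piecewise-constant interpolants.

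First, I would prove a uniform $L$-Lipschitz estimate for $M_\tau$ in $(\prob_2(\X),W_2)$. On $[n\tau,(n+1)\tau]$ the coupling $(\exp^{s-n\tau},\exp^{t-n\tau})_\sharp \Phi_\tau^n$ between $M_\tau(s)$ and $M_\tau(t)$ yields $W_2(M_\tau(s),M_\tau(t))\le |t-s||\Phi_\tau^n|_2\le |t-s|L$, and patching across nodes via triangle inequality extends this to all of $[0,T]$. A direct consequence is that $M_\tau$ and the piecewise-constant interpolant $t\mapsto M_\tau^{\lfloor t/\tau\rfloor}$ differ by at most $\tau L$ in $W_2$, so they share the same uniform limits; standard compactness arguments then give relative compactness of $(M_h)_h$ in $\rmC([0,T];\prob_2(\X))$.

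Next, I would derive the discrete $\lambda$-\EVI estimate. Fix $\Phi\in\frF$ and set $\nu:=\sfx_\sharp\Phi$. Expanding $|x_0+\tau v_0-x_1|^2$ along any $\ssigma\in\Lambda(\Phi_\tau^n,\nu)$ gives
\[
W_2^2(M_\tau^{n+1},\nu)\le W_2^2(M_\tau^n,\nu)+2\tau\bram{\Phi_\tau^n}{\nu}+\tau^2 L^2,
\]
while choosing $\Ttheta\in\Lambda(\Phi_\tau^n,\Phi)$ in \eqref{eq:33}, splitting $\la x_0-x_1,v_0\ra=\la x_0-x_1,v_0-v_1\ra+\la x_0-x_1,v_1\ra$, and minimizing the second summand, delivers the bound $\bram{\Phi_\tau^n}{\nu}\le \lambda W_2^2(M_\tau^n,\nu)-\bram{\Phi}{M_\tau^n}$. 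Together these two estimates form the one-step analogue of \eqref{eq:EVIdef}.

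The final step is the limit passage. Any uniform cluster point $\mu$ of $(M_h)_h$ is $L$-Lipschitz, and summing the discrete EVI in $n$ against smooth time-test functions, then passing to the limit along $h$ using lower semicontinuity of $\mu\mapsto\bram{\Phi}{\mu}$ in $W_2$ and the uniform $L$-bound on $|\Phi_\tau^n|_2$, yields the distributional form \eqref{eq:EVIdef} for $\mu$ and every $\Phi\in\frF$. Uniqueness of $\lambda$-\EVI solutions is a standard consequence of \eqref{eq:EVIdef}, so the cluster point is unique and the whole family $(M_h)_h$ converges uniformly to it. The principal technical obstacle is precisely this limit passage, and the uniform $L$-stability assumption on $|\Phi_\tau^n|_2$ in Definition \ref{def:EEscheme} is included exactly to make it go through.
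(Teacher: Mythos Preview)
The paper does not actually prove this theorem: it is stated as a recall, with the sentence immediately preceding it attributing the result to \cite[Theorem~5.9(3)--(4)]{CSS}. You correctly identify this citation at the outset, so at the level of ``what the paper does'' your proposal matches.

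Where you go further is in sketching an argument, and that sketch deviates from the actual proof in \cite{CSS}. There, convergence of $(M_h)_h$ is obtained not via compactness plus identification of the limit but via explicit error estimates between two discrete solutions with different step sizes (see \cite[Proposition~6.3]{CSS}), which directly exhibit $(M_h)_h$ as Cauchy in $\rmC([0,T];\prob_2(\X))$; the \EVI characterization of the limit is then derived from a separate discrete \EVI estimate. Your compactness route is conceptually cleaner but puts all the weight on the limit passage, and there your appeal to ``lower semicontinuity of $\mu\mapsto\bram{\Phi}{\mu}$'' needs justification: the integrand $\langle x_0-x_1,v_0\rangle$ is unbounded, so one has to invoke tightness of the optimizers in $\Lambda(\Phi,\cdot)$ together with uniform integrability of the second moments (available here from the $W_2$-convergence and the fixed $\Phi$). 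This is indeed established in \cite{CSS} (see the semicontinuity properties of the pairings in \S3 there), so your approach can be completed, but it is not the route the cited reference takes.
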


 The link between $\mathscr E(\bar\mu,\tau,T,L)$ and $\mathscr R(\bar\mu,\tau,T,L)$, which justifies the definition of the set of triples $\mathscr T(\bar\mu,\tau,T,L)$, is made explicit by the following result.

\begin{proposition}\label{prop:exlift}
    Let $\frF \subset \prob_2(\TX)$ be a \MPVF and let $\bar\mu \in \dom(\frF)$ and $\tau, T, L>0$.  Let $(M_\tau, \frF_\tau,\eeta_\tau)\in \mathscr T(\bar\mu,\tau,T,L)$, then
    \begin{enumerate}
        \item\label{item1-etatau} $\eeta_\tau$ is concentrated on curves starting from $\supp(\bar\mu)$ which are affine in every interval $[n\tau, (n+1)\tau] \cap [0,T]$, $n=0, \dots, \finalstep T\tau$;
        \item\label{item2-etatau} we have\[\left(\sfe_{n\tau},\frac{\sfe_{(n+1)\tau}-\sfe_{n\tau}}{\tau}\right)_\sharp\eeta_\tau=\frF_\tau(\tau n) \quad \text{ for every } n=0, \dots, \finalstep T\tau-2.\]
    \end{enumerate}
    In particular, $(\mathsf{e}_t)_\sharp \eeta_\tau = M_\tau(t)$ for every $t \in [0,T]$.
\end{proposition}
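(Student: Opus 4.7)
The plan is to reduce everything to direct pushforward computations on the multi-step plan $\aalpha_\tau$, since by construction $\eeta_\tau = G_\sharp \aalpha_\tau$ and all the maps appearing in the statement factor through projections $\pi^n$ on the coordinates of $\X^{\finalstep T\tau + 1}$.

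First I would address item (1). The concentration on curves that are affine on each $[n\tau, (n+1)\tau]\cap[0,T]$ is tautological from the definition of $G$: the image of $G$ consists precisely of such interpolants. For the starting point condition, I would use $\sfe_0\circ G = \pi^0$ together with Lemma \ref{lem:Ralpha}\eqref{item:Ralpha} applied with $n=1$, which factors through $\Rest_1$, to obtain
\[(\sfe_0)_\sharp \eeta_\tau \;=\; (\pi^0)_\sharp \aalpha_\tau \;=\; (\pi^0)_\sharp \aalpha_\tau^1 \;=\; (\pi^0)_\sharp T_\tau^0 \;=\; \sfx_\sharp \Phi_\tau^0 \;=\; M_\tau^0 \;=\; \bar\mu,\]
so that $\eeta_\tau$-almost every curve starts in $\supp(\bar\mu)$.

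The central computation, which drives both item (2) and the final \emph{In particular} clause, is the identity
\[(\pi^n,\pi^{n+1})_\sharp \aalpha_\tau \;=\; T_\tau^n \;=\; (\sfx,\exp^\tau)_\sharp \Phi_\tau^n \qquad \text{for } 0\le n\le \finalstep T\tau-1.\]
I would prove this by first invoking Lemma \ref{lem:Ralpha}\eqref{item:Ralpha} to pass from $\aalpha_\tau$ to $\aalpha_\tau^{n+1}$. For $n\ge 1$, the recursive formula in Definition \ref{def:step-plans} together with Lemma \ref{lem:Ralpha}\eqref{item:evalalpha} ($(\pi^n)_\sharp \aalpha_\tau^n = \sfx_\sharp \Phi_\tau^n$) yields
\[(\pi^n,\pi^{n+1})_\sharp \aalpha_\tau^{n+1} \;=\; \int (\delta_{x_n}\otimes T_{\tau,x_n}^n)\,\d[\sfx_\sharp \Phi_\tau^n](x_n) \;=\; T_\tau^n,\]
while the base case $n=0$ is immediate from $\aalpha_\tau^1 = T_\tau^0$.

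From this identity, item (2) follows by composing with the affine map $(y_0,y_1)\mapsto (y_0, \tau^{-1}(y_1-y_0))$, since $\sfe_{n\tau}\circ G = \pi^n$ on $[n\tau,(n+1)\tau]$. Indeed, $(\sfx, \tau^{-1}(\exp^\tau - \sfx))(x,v) = (x,v)$, so the composition collapses to the identity on $\TX$ and returns $\Phi_\tau^n = \fF_\tau(n\tau)$. The \emph{In particular} claim is analogous: for $t\in[n\tau,(n+1)\tau]\cap[0,T]$, the map $\sfe_t\circ G$ acts on the two relevant coordinates as $(y_0,y_1)\mapsto y_0 + \tau^{-1}(t-n\tau)(y_1-y_0)$, whose composition with $(\sfx,\exp^\tau)$ simplifies to $(x,v)\mapsto x + (t-n\tau)v = \exp^{t-n\tau}(x,v)$, giving $(\sfe_t)_\sharp \eeta_\tau = (\exp^{t-n\tau})_\sharp \Phi_\tau^n = M_\tau(t)$.

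I do not anticipate any substantive obstacle: the whole argument is bookkeeping around the recursive structure of $\aalpha_\tau^n$ and the elementary identity $\exp^\tau(x,v)-x = \tau v$. The only mild care required is handling the base case $n=0$ separately (since Lemma \ref{lem:Ralpha}\eqref{item:evalalpha} is stated for $n\ge 1$) and noting that the range $n\le \finalstep T\tau - 2$ in item (2) ensures $(n+1)\tau \le T$ so that $\sfe_{(n+1)\tau}$ is well-defined on $\rmC([0,T];\X)$.
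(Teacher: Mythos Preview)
Your proposal is correct and follows essentially the same route as the paper's proof: both reduce to $\aalpha_\tau$ via $\eeta_\tau=G_\sharp\aalpha_\tau$, invoke Lemma~\ref{lem:Ralpha} to pass to $\aalpha_\tau^{n+1}$, unwind the recursive definition together with $(\pi^n)_\sharp\aalpha_\tau^n=M_\tau^n$, and recognize the result as $T_\tau^n=(\sfx,\exp^\tau)_\sharp\Phi_\tau^n$. The only cosmetic difference is that the paper writes this as a single chain of integral equalities against a test function, whereas you isolate the intermediate identity $(\pi^n,\pi^{n+1})_\sharp\aalpha_\tau=T_\tau^n$ and then compose with the affine change of variables; the content is the same.
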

\begin{proof} It is clear by construction that $\eeta_\tau$ satisfies \eqref{item1-etatau}. Let $n \in \{0, \dots, \finalstep T\tau-2\}$ and observe that, for every bounded Borel function $f:\TX\to\R$, we have
\begin{align*}
&\int_{\TX} f(x,v) \de \left[\left(\sfe_{n\tau},\frac{\sfe_{(n+1)\tau}-\sfe_{n\tau}}{\tau}\right)_\sharp\eeta_\tau\right](x,v)\\
&= \int_{\X^{\finalstep T\tau+1}} f \left ( x_n, \frac{x_{n+1}-x_n}{\tau} \right ) \de \aalpha_\tau(x_0,\dots, x_{\finalstep T\tau}) \\
&= \int_{\X^{n+2}} f \left ( x_n, \frac{x_{n+1}-x_n}{\tau} \right ) \de \aalpha_\tau^{n+1}(x_0,\dots, x_{n+1}) \\
&= \int_{\X^{n+2}} f \left ( x_n, \frac{x_{n+1}-x_n}{\tau} \right ) \de T_{\tau, x_n}^n(x_{n+1})\de\aalpha_\tau^n(x_0,\dots, x_n) \\
&= \int_{\X^2} f \left ( x_n, \frac{x_{n+1}-x_n}{\tau} \right ) \de T_{\tau, x_n}^n(x_{n+1})\de M_\tau^n(x_n) \\
&= \int_{\X^2} f \left ( x_n, \frac{x_{n+1}-x_n}{\tau} \right ) \de T_\tau^n(x_n,x_{n+1}) \\
&= \int_{\TX} f(x,v)\de \Phi_\tau^n(x,v)\equiv \int_{\TX} f(x,v)\de \left[\frF_\tau(\tau n) \right](x,v),
\end{align*}
where we used the definitions of $\eeta_\tau$, $\aalpha_\tau^n$, $T_\tau^n$ in Definitions \ref{def:intEE},\ref{def:step-plans} and Lemma \ref{lem:Ralpha}.

This proves \eqref{item2-etatau}.
\end{proof}

\medskip

In the following, we give a counterpart of Theorem \ref{thm:apriori-estimate} for probabilistic representations of the Explicit Euler Scheme.

\begin{proposition}\label{prop:tight} Let $\frF$ be a
  $\lambda$-dissipative \MPVF according to \eqref{eq:33}. Let $T,L>0$, $\N\ni h\mapsto \tau(h)\in\R_+$ be a vanishing sequence of
  time steps, $\bar\mu\in \dom(\frF)$. Let $(M_h, \frF_h) \in \mathscr E(\bar{\mu},\tau(h),T,L)$ and let $\eeta_h \in\prob(\rmC([0,T]; \X))$ satisfying \eqref{item1-etatau},\eqref{item2-etatau} of Proposition \ref{prop:exlift} for $(M_h, \frF_h)_h$. Then the following hold:
\begin{enumerate}
\item the family $(\eeta_h)_h$ is tight in $\prob(\rmC([0,T]; \X))$; in particular it weakly converges, up to subsequences, to a measure $\eeta\in\prob_2(\rmC([0,T]; \X))$;
\item $\mu_t:=(\sfe_t)_\sharp \eeta$, $t \in [0,T]$, is the unique $\lambda$-\EVI solution in $[0,T]$ for $\frF$ starting from $\bar\mu$.
\end{enumerate}
Moreover, if $\supp(\bar\mu)$ is bounded, then $(\eeta_h)_h$ is relatively compact in $\prob_2(\rmC([0,T]; \X))$, so that it converges to $\eeta$ also in $W_{2, \infty}$, up to subsequences.
\end{proposition}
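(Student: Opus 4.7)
The strategy is: (i) extract a narrowly convergent subsequence of $(\eeta_h)_h$ via Prokhorov's theorem, and (ii) identify its time marginals using the $W_2$-convergence of $M_h$ already granted by Theorem~\ref{thm:apriori-estimate}.

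For tightness, I would first derive a uniform $H^1$-type estimate on paths under $\eeta_h$. The $L$-stability $|\frF_h(t)|_2\le L$, combined with Proposition~\ref{prop:exlift}(2), says that under $\eeta_h$ the squared increment $|\gamma((n+1)\tau(h))-\gamma(n\tau(h))|^2/\tau(h)^2$ has mean at most $L^2$. Since $\eeta_h$-almost every curve is affine on $[n\tau(h),(n+1)\tau(h)]$, summing over $n$ yields
\[\int\int_0^T |\dot\gamma(t)|^2\,\d t\,\d\eeta_h(\gamma) \le TL^2.\]
Cauchy--Schwarz then gives $|\gamma(t)-\gamma(s)|^2\le |t-s|\,\|\dot\gamma\|_{L^2}^2$, and Markov's inequality produces the uniform modulus-of-continuity estimate
\[\eeta_h\left(\left\{\gamma\colon\sup_{|t-s|\le\delta}|\gamma(t)-\gamma(s)|>\eps\right\}\right) \le \frac{\delta\,TL^2}{\eps^2}.\]
Combined with marginal tightness---since $(\sfe_t)_\sharp\eeta_h=M_h(t)$ by Proposition~\ref{prop:exlift}, and $M_h(t)\to\mu_t$ in $W_2$ by Theorem~\ref{thm:apriori-estimate}---Prokhorov's theorem supplies a subsequence $\eeta_{h_k}\weakto\eeta$ in $\prob(\rmC([0,T];\X))$. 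The elementary bound $\|\gamma\|_\infty^2\le 2|\gamma(0)|^2+2T\|\dot\gamma\|_{L^2}^2$ together with Fatou's lemma then ensures $\eeta\in\prob_2(\rmC([0,T];\X))$.

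For the identification in~(2), continuity of $\sfe_t$ and the narrow convergence give $(\sfe_t)_\sharp\eeta=\lim_k(\sfe_t)_\sharp\eeta_{h_k}=\lim_k M_{h_k}(t)=\mu_t$, and Theorem~\ref{thm:apriori-estimate} asserts that $\mu$ is the unique $\lambda$-\EVI solution starting from $\bar\mu$; in particular the identification does not depend on the chosen subsequence.

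For the bounded-support addendum, the plan is to upgrade narrow convergence to $W_{2,\infty}$ by establishing uniform integrability of $\|\gamma\|_\infty^2$ under $(\eeta_h)_h$. This is expected to be the main obstacle: the $L^2$-stability $|\Phi|_2\le L$ controls only second moments of velocities, not their essential supremum, so bounded support of $\bar\mu$ by itself does not suffice. Here I would invoke the natural boundedness condition on $\frF$ underlying the solvability criterion~\cite[Prop.~5.20]{CSS}---namely, that $\supp\Phi$ is uniformly bounded whenever $\sfx_\sharp\Phi$ has bounded support---to propagate inductively in $n$ a uniform bound $\supp M_h(t)\subset B_{R'}$ for every $t\in[0,T]$ and every $h$. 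This forces $\|\gamma\|_\infty\le R'$ for $\eeta_h$-a.e.\ $\gamma$, giving trivial uniform integrability of $\|\gamma\|_\infty^2$ and hence relative compactness of $(\eeta_h)_h$ in $\prob_2(\rmC([0,T];\X))$; by uniqueness of the narrow limit the extracted subsequence must converge to the same $\eeta$ in the strong $W_{2,\infty}$-topology.
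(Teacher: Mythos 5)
Your treatment of parts (1) and (2) is essentially the paper's argument: the uniform bound $\sup_h\int\mathcal{A}_2\,\d\eeta_h<+\infty$ (up to the small slip that the correct bound is $L^2(T+\tau(h))$ rather than $TL^2$, since $\finalstep T{\tau(h)}\ge T/\tau(h)$), combined with uniform convergence of the marginal curves $M_h$ from Theorem~\ref{thm:apriori-estimate}, yields tightness, and the identification of $(\sfe_t)_\sharp\eeta$ is the same push-forward/uniqueness argument the paper performs via \cite[Lemma~5.2.1]{ags}. The paper packages the tightness step by citing \cite[Theorem~10.4]{abs21} instead of carrying out the Cauchy--Schwarz/Markov modulus-of-continuity estimate by hand, but the substance is identical.

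The bounded-support addendum is where your argument goes astray. You assert that ``bounded support of $\bar\mu$ by itself does not suffice'' and therefore import an additional boundedness hypothesis on $\frF$ (namely that $\supp\Phi$ is uniformly bounded whenever $\sfx_\sharp\Phi$ has bounded support, akin to condition~\ref{itemF2} used later in Section~\ref{ex:nonlocalSGD}). That hypothesis is \emph{not} part of Proposition~\ref{prop:tight}, which assumes only $\lambda$-dissipativity of $\frF$, the $L$-stability of the Euler iterates, and boundedness of $\supp(\bar\mu)$; your proposal therefore proves a weaker statement. The missing ingredient is that the data you already have are in fact enough. This is exactly what Proposition~\ref{prop:compactness} in Appendix~\ref{sec:appA} is built for: from (i) the action bound, (ii) boundedness of $\supp((\sfe_0)_\sharp\eeta_h)=\supp(\bar\mu)$, and (iii) relative compactness of the marginal curves $(M_h(\cdot))_h$ in $\rmC([0,T];\prob_2(\X))$ (which Theorem~\ref{thm:apriori-estimate} already supplies), one deduces relative compactness of $(\eeta_h)_h$ in $\prob_2(\rmC([0,T];\X))$ \emph{without} any pointwise control on the supports. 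The mechanism is a de la Vall\'ee Poussin argument (Propositions~\ref{prop:poussin1}--\ref{prop:poussin2}): the compactness of $\mathcal K_0=\{M_h(t):t\in[0,T],\,h\}$ in $\prob_2(\X)$ yields a convex superlinear $\varphi$ with $\sup_{\mu\in\mathcal K_0}\int\varphi(|x|^2)\,\d\mu<\infty$, and one then shows by a chain-rule/H\"older computation that $\|\gamma\|_\infty^2$ has uniformly integrable $\eeta_h$-moments via a primitive $\psi$ of $(\varphi(s)/s)^{1/q}$. This is the genuine content of the addendum and the step you need to supply rather than circumvent by strengthening the hypotheses.
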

\begin{proof} Let $\mathcal{A}_2: \rmC([0,T]; \X) \to [0,+\infty]$ be the $2$-action functional defined in \eqref{eq:A2}.
By Proposition \ref{prop:exlift}, $M_h(t) = (\sfe_t)_\sharp \eeta_h$, $t \in [0,T]$, and $M_h$ is uniformly converging to $(\tilde\mu_t)_{t \in [0,T]}\subset\mathrm{Lip}([0,T];\X)$ by Theorem \ref{thm:apriori-estimate}. Thus, in order to prove tightness of $(\eeta_h)_h$ it is enough (see e.g.~\cite[Theorem 10.4]{abs21}) to prove that 
\begin{equation}\label{eq:tight}
\sup_h\int \mathcal{A}_2 \de \eeta_h < +\infty.
\end{equation}
By Proposition \ref{prop:exlift}\eqref{item1-etatau},\eqref{item2-etatau}, we have
\begin{align*}
    \int \mathcal{A}_2 \de \eeta_h =  \sum_{k=0}^{\finalstep T{\tau(h)} -1} \int_{\TX} \tau(h) |v|^2 \de\left[\frF_{\tau(h)}(k \tau(h))\right](x,v) \le \tau(h) L^2 \finalstep T{\tau(h)} \le L^2(T + \tau(h)),
\end{align*}
which yields \eqref{eq:tight}.

As a consequence, $\tilde\mu_t=(\sfe_t)_\sharp \eeta=:\mu_t$ for any $t \in [0,T]$ (cf. \cite[Lemma 5.2.1]{ags}), so that item (2) follows by Theorem \ref{thm:apriori-estimate}.

The last assertion is an immediate application of Proposition \ref{prop:compactness}.
    
\end{proof}

\section{Total dissipativity and lifting to curves}\label{sec:IE}
Let $(\Omega, \cB)$ be a standard Borel space endowed with a non atomic probability measure $\P$.
We recall that (cf.~\cite{CSS2grande}) every maximal totally $\lambda$-dissipative \MPVF $\frF\subset\prob_2(\TX)$ is in one-to-one correspondence with a maximal $\lambda$-dissipative (multivalued) operator $\Bb\subset L^2(\Omega, \cB, \P; \X)\times L^2(\Omega, \cB, \P; \X)$, called \emph{Lagrangian representation of $\frF$}. This fact revealed to be of particular importance in the context of (totally) dissipative \MPVF{s} in \cite{CSS2grande}, due the well-developed theory on monotone/dissipative operators in Hilbert spaces (cf. \cite{BC17,BrezisFR}), as it is $\Bb$.

Recall that there exists (cf.~\cite[Theorems A.5, A.6]{CSS2grande}) a semigroup of $e^{\lambda t}$-Lipschitz transformations $(\Sgp_t)_{t \ge
  0}$ with $\Sgp_t: \overline{\dom(\mmo)} \to
\overline{\dom(\mmo)}$ s.t.~for every $\bar X\in \dom(\mmo)$
the curve $t \mapsto \Sgp_t \bar X$ is included in $\dom(\mmo)$ and it
is the unique locally Lipschitz continuous solution of the differential inclusion
\begin{equation*}
\begin{cases}
  \dot{X}_t \in \mmo X_t \quad \text{ a.e. } t>0, \\
  X\restr{t=0} = \bar X.
\end{cases}
\end{equation*}
Given $D \subset \prob_2(\X)$, we set 
\begin{equation}\label{eq:setSxmu}
    \Sp{\X, D}:= \left \{ (x,\mu) \in \X \times D \mid x \in \supp(\mu) \right \}.
\end{equation}
By \cite[Theorem 3.4]{CSS2grande} or \cite[Theorem 4.12]{CSS2piccolo}), for every $t \ge 0$, there exists a uniquely defined continuous map $\ss_t:\Sp{\X,\overline{\dom(\frF)}}\to \X$ such that for every $\mu \in \overline{\dom(\frF)}$, the map $\ss_t(\cdot,\mu): \supp(\mu) \to \X$ is $e^{\lambda t}$-Lipschitz continuous and
\[
  \label{eq:7-2} \text{ for every $X \in \overline{\dom(\mmo)}$, } \Sgp_t X(\omega)=\ss_t(X(\omega),X_\sharp\P) \text{ for $\P$-a.e.~$\omega \in \Omega$.}
\]
The map $\ss_t$ is also associated to the semigroup of $e^{\lambda t}$-Lipschitz transformations $S_t$ of $\frF$ in $\prob_2(\TX)$ via the formula
$S_t(\bar\mu):=\ss_t(\cdot,\bar\mu)_\sharp{\bar\mu}$ (cf. \cite[Definition 4.1]{CSS2grande}), see the next Theorem \ref{thm:implift}.
Reasoning as in \cite[Eq. (4.14)]{CSS2grande}, if $\bar\mu\in\dom(\frF)$, we can associate to $(\ss_t)_{t\ge0}$ a $\bar\mu$-measurable map
\begin{equation}\label{eq:s}
    {\mathrm s}_{\bar\mu}:\X\to H^1(0,T;\X),\quad\mathrm s_{\bar\mu}[x](t):=\ss_t(x,\bar\mu).
\end{equation}
Notice also that, by definition of semigroup, whenever $\bar X \in L^2(\Omega, \cB, \P; \X)$ is such that $(\bar X)_\sharp \P=\bar\mu$, then
\begin{equation}\label{eq:init}
    \sfe_0 \circ {\mathrm s}_{\bar\mu} \circ \bar X = \bar X.
\end{equation}
The following result comes from \cite[Theorems 4.2, 4.4, 4.8]{CSS2grande}.

\begin{theorem}\label{thm:implift} Let $\frF \subset \prob_2(\TX)$ be a maximal totally $\lambda$-dissipative
    \MPVF. Then,
    for every $\bar\mu\in \overline{\dom(\frF)}$, the curve $\mu:[0,+\infty)\to\prob_2(\X)$, $\mu_t:=S_t(\bar\mu)$, is the unique $\lambda$-EVI solution in $[0,+\infty)$ for $\frF$ starting from 
    $\bar\mu$. Moreover
    $S_t$ is a semigroup of $e^{\lambda t}$-Lipschitz transformations
    satisfying
    \begin{equation*}
        W_2(S_t(\bar\mu'),S_t(\bar\mu''))
        \le e^{\lambda t}
        W_2(\bar\mu',\bar\mu'')
        \quad\text{for every }
        \bar\mu',\bar\mu''\in \overline{\dom(\frF)},\,t\ge0.
    \end{equation*}
    If $T>0$ and, in addition, $\bar\mu \in \dom(\frF)$, then $\eeta:=(\mathrm s_{\bar\mu})_\sharp \bar\mu\in\prob(\rmC([0,T];\X))$ is the unique element of $\prob(\rmC([0,T]; \X))$ concentrated on absolutely continuous curves satisfying 
    \begin{enumerate}
  \item $(\mathsf e_t)_\sharp \eeta=\mu_t$ for every $t\in [0,T]$;
  \item $\eeta$-a.e.~$\gamma$ is an integral solution of the
    differential equation
    $\dot\gamma(t)=\bb^\circ_t(\gamma(t))$ a.e.~in $[0,T]$, where $\bb^\circ$ is the map associated with the unique element of minimal norm of $\Bb$ as in \cite[Theorem 3.4]{CSS2grande}.
    \end{enumerate}
    Finally, if $\bar\mu\in \overline{\dom(\frF)}$ with $\#(\supp(\bar\mu))$ finite, then 
    \[\#(\supp(\mu_t)) \text{ is finite and non-increasing w.r.t.~} t\ge 0.\]
\end{theorem}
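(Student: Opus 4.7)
The plan is to exploit the Lagrangian representation $\Bb \subset L^2(\Omega,\cB,\P;\X)^2$ of $\frF$. Since $\frF$ is maximal totally $\lambda$-dissipative, $\Bb$ is maximal $\lambda$-dissipative on the Hilbert space $L^2(\Omega,\cB,\P;\X)$, so classical Komura–Brezis theory (already invoked via \cite[Theorems A.5, A.6]{CSS2grande}) produces the semigroup $(\Sgp_t)_{t\ge 0}$ of $e^{\lambda t}$-Lipschitz maps on $\overline{\dom(\mmo)}$ that generates the unique locally Lipschitz integral solution of $\dot X \in \mmo X$. The key structural fact I would first check is that $\Sgp_t \bar X$ depends only on the law of $\bar X$: given two random variables $\bar X', \bar X''$ with the same law, one can build a measure-preserving automorphism of $\Omega$ intertwining them, and by invariance of $\Bb$ under such automorphisms (a consequence of $\Bb$ arising as a Lagrangian representation of a \MPVF) the images $\Sgp_t \bar X'$ and $\Sgp_t \bar X''$ have the same law. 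This justifies $S_t(\bar\mu) := \ss_t(\cdot,\bar\mu)_\sharp \bar\mu$. The Wasserstein contraction then follows by taking $X', X''$ with $(X',X'')_\sharp \P \in \Gamma_o(\bar\mu',\bar\mu'')$ and estimating $W_2^2(S_t\bar\mu', S_t\bar\mu'') \le \|\Sgp_t X' - \Sgp_t X''\|_{L^2}^2 \le e^{2\lambda t} W_2^2(\bar\mu',\bar\mu'')$.

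Next I would verify that $\mu_t := S_t(\bar\mu)$ is a $\lambda$-\EVI solution. For any $\Phi \in \frF$, realize $\Phi$ via an element $(Y,V) \in \Bb$ with $Y_\sharp \P = \sfx_\sharp \Phi$, arranged so that $(X_t, Y)_\sharp \P \in \Gamma_o(\mu_t, \sfx_\sharp\Phi)$ where $X_t = \Sgp_t \bar X$. The Hilbert-space computation $\tfrac{1}{2}\tfrac{\d}{\d t}\|X_t - Y\|^2 = \langle \dot X_t, X_t-Y\rangle \le \langle V, X_t - Y\rangle + \lambda \|X_t - Y\|^2$, valid $\P$-a.e., integrates into exactly the bound $\tfrac{1}{2}\tfrac{\d}{\d t} W_2^2(\mu_t, \sfx_\sharp\Phi) \le \lambda W_2^2(\mu_t, \sfx_\sharp\Phi) - \bram{\Phi}{\mu_t}$ of Definition \ref{def:evi}. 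Uniqueness of $\lambda$-\EVI solutions is then a standard Grönwall argument directly from the EVI.

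For the probabilistic representation with $\bar\mu \in \dom(\frF)$, I would set $\eeta := (\mathrm s_{\bar\mu})_\sharp \bar\mu$ and verify both claims. Property (1) is immediate: $(\sfe_t)_\sharp \eeta = \ss_t(\cdot,\bar\mu)_\sharp \bar\mu = \mu_t$. Property (2) follows from transferring the differential inclusion: by Brezis' theory $t \mapsto \Sgp_t \bar X$ is the unique solution with $\dot X_t = \mmo^\circ X_t$ where $\mmo^\circ$ is the minimal section; translating this through $\ss_t$ and using the identification of the minimal section of $\Bb$ with the map $\bb^\circ_t$ of \cite[Theorem 3.4]{CSS2grande} gives that $\eeta$-a.e.~curve $\gamma$ satisfies $\dot\gamma(t) = \bb^\circ_t(\gamma(t))$. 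For uniqueness of $\eeta$, given any other $\eeta'$ satisfying (1) and (2), I would lift it: choose $\bar X$ with law $\bar\mu$ and, by a measurable selection on $\rmC([0,T];\X)$, construct $X : \Omega \to \rmC([0,T];\X)$ with $X_\sharp \P = \eeta'$ and $(\sfe_0 \circ X) = \bar X$ (using non-atomicity of $\P$ and \eqref{eq:init}). The resulting $L^2$-curve $t \mapsto X_t$ satisfies $\dot X_t = \bb^\circ_t \circ X_t$ with $X_0 = \bar X$, so by uniqueness in Hilbert space $X_t = \Sgp_t \bar X$ $\P$-a.e., giving $\eeta' = \eeta$.

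The main obstacle is this uniqueness part of the probabilistic representation, because lifting an arbitrary path-measure $\eeta'$ to an $L^2$-valued curve that \emph{pointwise in $\omega$} solves the differential inclusion requires a measurable selection compatible with the prescribed initial random variable; the dissipativity of $\bb^\circ_t$ is what ultimately makes two such lifts coincide. The final statement on finite support is by contrast elementary: if $\bar\mu = \sum_{i=1}^N p_i \delta_{x_i}$, then $\mu_t = \sum_i p_i \delta_{\ss_t(x_i,\bar\mu)}$ has at most $N$ atoms, and since two atoms that coincide at some time must coincide forever after by the Lipschitz continuity of $\ss_t(\cdot,\bar\mu)$ and the semigroup property, the cardinality of the support is non-increasing in $t$.
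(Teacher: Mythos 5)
The paper does not prove Theorem~\ref{thm:implift} at all: it is stated as a direct citation of \cite[Theorems 4.2, 4.4, 4.8]{CSS2grande}, and the machinery you invoke (Lagrangian representation $\Bb$, Brezis--Komura theory, law-invariance, the map $\ss_t$) is precisely that of the cited companion paper. So your outline follows the intended route rather than a genuinely different one, and it is fair to compare it against what \cite{CSS2grande} must actually establish.

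There is, however, a genuine error in your law-invariance step. You assert that if $\bar X'$ and $\bar X''$ have the same law then there is a measure-preserving \emph{automorphism} $\sigma$ of $\Omega$ with $\bar X'' = \bar X' \circ \sigma$. This is false in general: on $\Omega = [0,1]$ with Lebesgue measure, $\bar X'(\omega) := \omega$ and $\bar X''(\omega) := 2\omega \bmod 1$ have the same uniform law, but the only candidate $\sigma$ with $\bar X'' = \bar X'\circ\sigma$ is $\sigma(\omega) = 2\omega \bmod 1$, which is measure-preserving but not injective. The actual argument in \cite{CSS2grande} is subtler: one shows that $\Bb$ is invariant under the \emph{group} of measure-preserving isomorphisms, then combines this invariance with maximality and an approximation/selection argument (ultimately producing the continuous map $\ss_t$ on $\Sp{\X,\overline{\dom(\frF)}}$); the fact that $\Sgp_t X$ depends on $X$ only through its law is a consequence of that construction, not something obtained by pre-composing with an automorphism. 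Two further points you would need to firm up: (i) in the EVI verification the coupling $(X_t,Y)_\sharp\P$ cannot be kept optimal for all $t$ at once — you must fix $t_0$, choose $Y$ so that $(X_{t_0},Y)_\sharp\P\in\Gamma_o(\mu_{t_0},\sfx_\sharp\Phi)$, and compare upper Dini derivatives at $t_0$ using $W_2^2(\mu_t,\sfx_\sharp\Phi)\le \|X_t-Y\|_{L^2}^2$ with equality at $t=t_0$; and (ii) you correctly flag the uniqueness of $\eeta$ as the real obstacle, but the measurable lift you sketch — producing $X:\Omega\to\rmC([0,T];\X)$ with $X_\sharp\P=\eeta'$ and $\sfe_0\circ X=\bar X$ from an arbitrary $\eeta'$ satisfying (1)--(2) — is not justified and is precisely where the work lies.
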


We also recall the following result taken from \cite[Theorem 3.20]{CSS2grande}.
\begin{theorem}[The minimal selection]\label{thm:minsel}
  Let $\frF\subset\prob_2(\TX)$ be a maximal totally $\lambda$-dissipative
  \MPVF. The following hold.
  \begin{enumerate}
  \item For every $\mu\in \dom(\frF)$ there exists a unique vector field
    $\ff^\circ[\mu]\in L^2(\X,\mu;\X)$ such that
    \begin{equation}\label{eq:defq}
      (\ii_\X, \ff^\circ[\mu])_\sharp\mu\in \frF[\mu],\quad
      \int |\ff^\circ[\mu]|^2\,\d\mu\le 
      \int |v|^2\,\d\Phi\quad\text{for every }\Phi\in\frF[\mu].
    \end{equation}
    We denote the \emph{minimal selection of $\frF$} at $\mu$ by
    \[\frF^\circ[\mu]:=(\ii_\X, \ff^\circ[\mu])_\sharp\mu.\]
     \item If $\mmo$ is the Lagrangian representation of $\frF$, then for
    every $\mu\in \dom(\frF)$, we have 
    \[\ff^\circ[\mu]=\bb^\circ[\mu]\quad\mu\text{-a.e.},\]
    where $\bb^\circ$ as in \cite[Theorem 3.4]{CSS2grande}.
    \item The map $|\frF|_2:\prob_2(\X)\to [0,+\infty]$ defined by
    \begin{equation}
      \label{eq:21}
      |\frF|_2(\mu):=
      \begin{cases}
        \displaystyle
        \int |\ff^\circ[\mu]|^2\,\d\mu&\text{if }\mu\in \dom(\frF),\\
        +\infty&\text{if }\mu \not\in \dom(\frF)
      \end{cases}
    \end{equation}
    is lower semicontinuous.
\end{enumerate}
\end{theorem}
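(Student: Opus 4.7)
The plan is to reduce everything to the Lagrangian side and exploit the classical structure of maximal $\lambda$-dissipative operators in Hilbert spaces. Fix $\mu\in\dom(\frF)$ and any $\bar X\in L^2(\Omega,\cB,\P;\X)$ with $\bar X_\sharp\P=\mu$. By the one-to-one correspondence between $\frF$ and its Lagrangian representation $\mmo$, the sections decode as
\[
\mmo\bar X=\big\{V\in L^2(\Omega,\cB,\P;\X)\mid(\bar X,V)_\sharp\P\in\frF[\mu]\big\},
\]
and $(\Omega,\cB,\P)$ being a standard non-atomic probability space guarantees that every $\Phi\in\frF[\mu]$ arises as $(\bar X,V)_\sharp\P$ for some $V\in\mmo\bar X$. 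Since $\mmo$ is maximal $\lambda$-dissipative, $\mmo\bar X$ is a nonempty closed convex subset of $L^2$, so it admits a unique element of minimal norm $V^\circ$. Setting $\Phi^\circ:=(\bar X,V^\circ)_\sharp\P$, the identity $\int|v|^2\,\d\Phi=\|V\|_{L^2}^2$ for any lift $(\bar X,V)_\sharp\P=\Phi$ yields the minimality inequality in \eqref{eq:defq}.

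To extract $\ff^\circ[\mu]\in L^2(\X,\mu;\X)$ from $V^\circ$, I would prove that $V^\circ$ is $\sigma(\bar X)$-measurable. Given any measure-preserving automorphism $\tau$ of $\Omega$ fixing $\bar X$ (i.e.~$\bar X\circ\tau=\bar X$), the law invariance of $\mmo$ built into its construction in \cite{CSS2grande} gives $V^\circ\circ\tau\in\mmo\bar X$; since $\|V^\circ\circ\tau\|_{L^2}=\|V^\circ\|_{L^2}$, uniqueness of the minimal-norm element forces $V^\circ\circ\tau=V^\circ$, and non-atomicity of $\P$ then implies $V^\circ$ factors through $\bar X$ as $V^\circ=\ff^\circ[\mu]\circ\bar X$ for a Borel map $\ff^\circ[\mu]\in L^2(\X,\mu;\X)$. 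Uniqueness of $\ff^\circ[\mu]$ in $L^2(\X,\mu;\X)$ is inherited from that of $V^\circ$, proving (1). Item (2) is immediate: $\bb^\circ$ as defined in \cite[Theorem 3.4]{CSS2grande} is characterized by $\bb^\circ X(\omega)=\bb^\circ[X_\sharp\P](X(\omega))$ being the minimal-norm element of $\mmo X$, so the construction of $\ff^\circ[\mu]$ above coincides $\mu$-a.e.~with $\bb^\circ[\mu]$.

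For the lower semicontinuity in (3), take $\mu_n\to\mu$ in $\prob_2(\X)$ with $\liminf_n|\frF|_2(\mu_n)=\ell<+\infty$. Passing to a subsequence, one may assume $\mu_n\in\dom(\frF)$ and may lift to $X_n,X\in L^2(\Omega,\cB,\P;\X)$ with $(X_n)_\sharp\P=\mu_n$, $X_\sharp\P=\mu$, and $X_n\to X$ strongly in $L^2$, via a standard optimal-coupling construction available since $(\Omega,\P)$ is standard non-atomic. Set $V_n:=\ff^\circ[\mu_n]\circ X_n$, so that $V_n\in\mmo X_n$ and $\|V_n\|_{L^2}^2=|\frF|_2(\mu_n)$ is uniformly bounded. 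Extract a further subsequence along which $V_n\weakto V$ weakly in $L^2$; the strong-weak graph closedness of the maximal $\lambda$-dissipative operator $\mmo$ gives $V\in\mmo X$, hence $\mu=X_\sharp\P\in\dom(\frF)$, and then
\[
|\frF|_2(\mu)=\|\ff^\circ[\mu]\circ X\|_{L^2}^2\le\|V\|_{L^2}^2\le\liminf_n\|V_n\|_{L^2}^2=\ell.
\]

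The main obstacle is the factorization step in (1), i.e.~establishing that $V^\circ$ is $\sigma(\bar X)$-measurable. This rests on the law-invariance of the Lagrangian representation together with the uniqueness of the minimal-norm element, and it is the one place where the specific structural properties of $\mmo$ obtained in \cite{CSS2grande}, rather than the generic theory of maximal dissipative operators, are genuinely used. The closedness argument underlying (3) is standard once the Lagrangian lifting is in place, and (2) is a matter of matching definitions.
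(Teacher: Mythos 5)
The paper itself does not prove this result but imports it verbatim from \cite[Theorem 3.20]{CSS2grande}, so I can only assess your argument on its own terms. Your overall plan---lift to the Lagrangian representation $\mmo$, take the unique minimal-norm element $V^\circ$ of the closed convex set $\mmo\bar X$, show it is a function of $\bar X$, and obtain lower semicontinuity from strong--weak graph closedness of the maximal $\lambda$-dissipative operator---is in the right spirit, and parts (2) and (3) are fine once part (1) is secured. The difficulty is in the factorization step.

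The claim that invariance of $V^\circ$ under all measure-preserving automorphisms $\tau$ with $\bar X\circ\tau=\bar X$, together with non-atomicity of $\P$, forces $V^\circ$ to be $\sigma(\bar X)$-measurable is not correct for an \emph{arbitrary} lift $\bar X$. Non-atomicity of $\P$ does not give non-atomicity of the conditional measures $\P(\cdot\mid\bar X=x)$, and it is these conditionals that control how rich the stabilizer group is. A concrete obstruction: take $(\Omega,\P)=([0,1],\mathrm{Leb})$ and let $\bar X$ map $[0,1/3]$ identically and $(1/3,1]$ onto $(0,1/3]$ by an affine contraction of ratio $1/2$; then each fiber $\bar X^{-1}(x)$ consists of two points carrying conditional masses $1/3$ and $2/3$, so the only measure-preserving automorphism fixing $\bar X$ is the identity, and the invariant $\sigma$-algebra is all of $\cB$, strictly larger than $\sigma(\bar X)$. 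Your argument would then not detect whether $V^\circ$ factors. Two repairs are available. One is to choose a lift $\bar X$ with non-atomic conditionals (always possible on a standard non-atomic space, e.g.\ by working on $\X\times[0,1]$ with $\mu\otimes\mathrm{Leb}$), which is harmless thanks to law-invariance of $\mmo$. The other, cleaner and closer to the structure of the statement, avoids the factorization lemma entirely: given the minimizer $\Phi^\circ:=(\bar X,V^\circ)_\sharp\P\in\frF[\mu]$, show that its barycentric projection $\bri{\Phi^\circ}=(\ii_\X,\bry{\Phi^\circ})_\sharp\mu$ again lies in $\frF[\mu]$ (total $\lambda$-dissipativity of $\frF$ passes to the barycentric projection by replacing the Dirac disintegration with $\Phi^\circ_x$, and maximality then forces membership). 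Jensen's inequality gives $\int|\bry{\Phi^\circ}|^2\,\d\mu\le\int|v|^2\,\d\Phi^\circ$ with equality iff $\Phi^\circ_x$ is a Dirac $\mu$-a.e., and since the lift $W:=\bry{\Phi^\circ}\circ\bar X$ belongs to $\mmo\bar X$ with $\|W\|\le\|V^\circ\|$, minimality of $V^\circ$ forces equality, hence $\Phi^\circ$ is concentrated on the graph of $\ff^\circ[\mu]:=\bry{\Phi^\circ}$. This closes the gap without any appeal to automorphism groups.
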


The following stability result is a simple consequence of \cite[Theorem 4.9]{CSS2grande}.
\begin{proposition}\label{prop:yell} Let $\frF \subset \prob_2(\TX)$ be a maximal totally $\lambda$-dissipative \MPVF and let $\bar\mu_{h}, \bar\mu \in \dom(\frF)$, $h\in\N$, be such that \begin{enumerate}
        \item $(\bar\mu_{h})_{h}$ converges to $\bar\mu$ in $\prob_2(\X)$, as $h\to\infty$;
        \item $S:=\sup_h |\frF|_2(\bar\mu_{h})<\infty$.
    \end{enumerate}
If $T>0$ and $\eeta_h, \eeta \in \prob(\rmC([0,T]; \X))$ are as in Theorem \ref{thm:implift} for $\bar\mu_{h}$ and $\bar\mu$ respectively, then there exists $C>0$ depending only on $T,\lambda,S,\eeta$ such that
\[ W_{2, \infty} (\eeta_h, \eeta) \le C \, \sqrt{W_2(\bar\mu_{h}, \bar\mu)} \quad \text{ for every }h\in\N.\]
\end{proposition}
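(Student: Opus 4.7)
The plan is to reduce the claim to the Lagrangian level, where it becomes a direct consequence of \cite[Theorem 4.9]{CSS2grande}.

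\textbf{Lifting to random variables.} Since $(\Omega,\cB,\P)$ is standard and non-atomic, for each $h$ I pick $\bar X_h,\bar X\in L^2(\Omega,\cB,\P;\X)$ with $(\bar X_h)_\sharp\P=\bar\mu_h$, $(\bar X)_\sharp\P=\bar\mu$ and such that the coupling is optimal, i.e.
$$\|\bar X_h-\bar X\|_{L^2(\Omega;\X)}^2=W_2^2(\bar\mu_h,\bar\mu).$$
Define $X_h,X:\Omega\to \rmC([0,T];\X)$ by
$$X_h(\omega)(t):=\ss_t(\bar X_h(\omega),\bar\mu_h),\qquad X(\omega)(t):=\ss_t(\bar X(\omega),\bar\mu),$$
so that $X_h=\mathrm s_{\bar\mu_h}\circ\bar X_h$ and $X=\mathrm s_{\bar\mu}\circ\bar X$ in the notation of \eqref{eq:s}. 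Combining \eqref{eq:init} with Theorem \ref{thm:implift} and the uniqueness part of that statement yields $\eeta_h=(X_h)_\sharp\P$ and $\eeta=(X)_\sharp\P$. The pair $(X_h,X)_\sharp\P$ is therefore an admissible coupling of $\eeta_h$ and $\eeta$, whence
$$W_{2,\infty}^2(\eeta_h,\eeta)\ \le\ \int_\Omega\sup_{t\in[0,T]}|X_h(t,\omega)-X(t,\omega)|^2\,\d\P(\omega).$$

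\textbf{Applying Lagrangian stability.} By the one-to-one correspondence between maximal totally $\lambda$-dissipative \MPVF{s} and maximal $\lambda$-dissipative operators in $L^2(\Omega;\X)$ recalled at the beginning of Section \ref{sec:IE}, the curves $X_h$ and $X$ are the unique locally Lipschitz solutions of $\dot Y\in \Bb Y$ starting from $\bar X_h$ and $\bar X$, respectively. Using Theorem \ref{thm:minsel}, assumption (2) translates to the uniform bound $\sup_h\|\Bb^\circ\bar X_h\|_{L^2(\Omega;\X)}^2=\sup_h |\frF|_2(\bar\mu_h)\le S$. This is exactly the setting covered by \cite[Theorem 4.9]{CSS2grande}, which supplies a constant $C'>0$ depending only on $T$, $\lambda$, $S$ and on the reference trajectory $X$ (equivalently, on $\eeta$) such that
$$\int_\Omega \sup_{t\in[0,T]}|X_h-X|^2\,\d\P\ \le\ (C')^{2}\,\|\bar X_h-\bar X\|_{L^2(\Omega;\X)}\ =\ (C')^{2}\,W_2(\bar\mu_h,\bar\mu).$$
Taking square roots and setting $C:=C'$ finishes the argument.

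\textbf{Where the real work sits.} The only substantive ingredient is the cited Theorem 4.9: its role is to upgrade the pointwise-in-$t$ Lagrangian contraction $\|X_h(t)-X(t)\|_{L^2(\Omega)}\le e^{\lambda t}\|\bar X_h-\bar X\|_{L^2(\Omega)}$ to the uniform-in-$t$ estimate on $\|\sup_t|X_h-X|\|_{L^2(\Omega)}$. The natural route — and the source of the square root in the bound — is a chaining/discretization: partition $[0,T]$ into $N$ uniform pieces, combine the contraction at the nodes (yielding an error of order $(N+1)e^{2\lambda T}W_2^2$) with a Cauchy-Schwarz oscillation estimate $|X_h(t)-X_h(t_k)|^2\le(T/N)\int_{t_k}^{t_{k+1}}|\dot X_h|^2$, bounded thanks to the $L^2$-in-time velocity control inherited from assumption (2) and the non-increase (up to an $e^{\lambda t}$ factor) of the minimal-selection norm along the flow; then optimize $N\sim 1/W_2(\bar\mu_h,\bar\mu)$ to balance the two terms. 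Since this delicate step is precisely what Theorem 4.9 accomplishes, the present statement reduces to bookkeeping.
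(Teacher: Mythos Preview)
Your proof is correct and follows essentially the same route as the paper: lift to random variables realizing a (near-)optimal coupling of the initial data, push forward by $\mathrm s_{\bar\mu_h}$ and $\mathrm s_{\bar\mu}$ to obtain $\eeta_h$ and $\eeta$, and then invoke the Lagrangian stability estimate of \cite[Theorem 4.9]{CSS2grande} to control $\|X_h-X\|_{L^2(\Omega;\rmC([0,T];\X))}$. The only minor difference is cosmetic---the paper uses \cite[Proposition 3.18]{CSS2piccolo} to get a coupling within a factor $2$ of $W_2$ rather than exactly optimal, and it extracts from the \emph{proof} of Theorem 4.9 the two-step estimate (an $L^2(0,T)$ bound followed by the interpolation inequality $\|Y\|_\infty^2\le C\|Y\|_{L^2}\|Y\|_{H^1}$) rather than the chaining/discretization mechanism you sketch in your closing paragraph; that last paragraph is heuristic commentary and not needed for the argument.
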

\begin{proof} Applying \cite[Proposition 3.18]{CSS2piccolo}, we can find random variables $\bar X, \bar X_h \in L^2(\Omega, \cB, \P; \X)$ such that 
    $(\bar X_h)_\sharp \P=\bar\mu_{h}$, $\bar X_\sharp \P =\bar\mu $, and $|\bar X_h - \bar X|_{L^2(\Omega; \X)} \le 2 \,W_2(\bar\mu_{h}, \bar\mu)$.
    We now consider the family of 
    $\P$-measurable maps 
    $\mathrm X_h:\Omega\to H^1(0,T;\X)
    \subset \mathrm C([0,T];\X)$ 
    defined as $\mathrm X_h:= \mathrm{s}_{\bar\mu_{h}} \circ \bar X_h$ and $\mathrm X:= \mathrm{s}_{\bar\mu} \circ \bar X$ (cf. \eqref{eq:s}). By the proof of \cite[Theorem 4.9]{CSS2grande}, we have
\begin{align*}
    \|\mathrm X_h-\mathrm X\|_{L^2(\Omega;L^2(0,T;\X))}^2 &\le T\,\mathrm e^{2\lambda_+ T} |\bar X_h-\bar X|^2_{L^2(\Omega; \X)},\\
    \|\mathrm X_h-\mathrm X\|^2_{L^2(\Omega;\mathrm C([0,T];\X))} &\le D\,\left(S'+\|\mathrm X\|_{L^2(\Omega; H^1(0,T;\X))}\right)
         \|\mathrm X_h-\mathrm X\|_{L^2(\Omega;L^2(0,T;\X))},
\end{align*}    
for constants $D,S'>0$ depending only on $T,\lambda,S$. The sought order of convergence immediately follows observing that 
\[ W_{2, \infty}(\eeta_h, \eeta) \le  \|\mathrm X_h-\mathrm X\|_{L^2(\Omega;\mathrm C([0,T];\X))}. \]
\end{proof}

\section{\MPVF{s} with totally dissipative barycenter}\label{sec:final} 
We first introduce another intermediate notion of dissipativity, that will play a crucial role.
\begin{definition}[Unconditional $\lambda$-dissipativity]
    \label{def:u-dissipativity}
    We say that a \MPVF $\frF \subset \prob_2(\TX)$ is \emph{unconditionally $\lambda$-dissipative}, $\lambda\in\R$, if 
    for every $\ggamma\in \prob_2(\X\times\X)$ and every $\Phi_i\in \frF[{\pi^i}_\sharp\ggamma]$, $i=0,1$, there exists $\ttheta\in \Gamma(\Phi_0,\Phi_1)$ such that $(\sfx^0,\sfx^1)_\sharp\ttheta=\ggamma$ and
    \begin{equation*}
        \int \langle v_1-v_0,x_1-x_0
        \rangle \,\d\ttheta(x_0,v_0,x_1,v_1)\le \lambda \int |x_1-x_0|^2\d\ggamma(x_0,x_1).
    \end{equation*}
\end{definition}
It is easy to check that total $\lambda$-dissipativity implies unconditional $\lambda$-dissipativity, which implies (metric) $\lambda$-dissipativity.
It is remarkable that a totally dissipative barycenter is enough to get unconditional dissipativity.

\begin{proposition} \label{prop:dissbari} Let $\frF \subset \prob_2(\TX)$ be a \MPVF whose barycenter $\bri\frF$ is totally $\lambda$-dissipative. Given $\Phi, \Psi \in \frF$ and $\ggamma \in \Gamma(\sfx_\sharp \Phi, \sfx_\sharp \Psi)$, define
\[ \ssigma := \int_{\X^2} \left ( \delta_{x} \otimes \Phi_{x} \right ) \otimes \left (  \delta_{y} \otimes \Psi_{y} \right ) \de \ggamma(x, y) \in \prob(\TX\times\TX),\]
where $\{\Phi_{x}\}_{x \in \X}$ and $\{\Psi_{y}\}_{y \in \X}$ are the disintegrations of $\Phi$ and $\Psi$ w.r.t.~their spatial marginals i.e.
\[ \Phi = \int_{\X} (\delta_{x} \otimes \Phi_{x}) \de (\sfx_\sharp \Phi)(x), \quad \Psi = \int_{\X} (\delta_{y} \otimes \Psi_{y}) \de (\sfx_\sharp \Psi)(y). \]
Then $\sigma\in\Gamma(\Phi, \Psi)$, $(\sfx^0, \sfx^1)_\sharp \ssigma = \ggamma$, and 
\begin{equation}\label{eq:dissbri}
    \int_{\TX} \langle x - y, v - w \rangle \de \ssigma (x, v, y, w) \le \lambda\int_{\X^2} |x-y|^2 \de \ggamma(x, y).
\end{equation}
In particular, $\frF$ is unconditionally $\lambda$-dissipative.
\end{proposition}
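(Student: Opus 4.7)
The plan is to verify the marginal claims by direct disintegration bookkeeping, and then to deduce the inner-product inequality by lifting $\ggamma$ to a coupling of the barycentric measures and invoking the assumed total $\lambda$-dissipativity of $\bri\frF$.

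First I would check that $\ssigma$ realizes the announced marginals. Using that $\{\Phi_x\}_x$, $\{\Psi_y\}_y$ are probability measures and that the marginals of $\ggamma$ are $\sfx_\sharp\Phi$ and $\sfx_\sharp\Psi$, a one-line computation with bounded Borel test functions gives
\[
(\pi^{0,1})_\sharp \ssigma = \int_\X (\delta_x\otimes\Phi_x)\,\d(\sfx_\sharp\Phi)(x)=\Phi,\quad (\pi^{2,3})_\sharp\ssigma=\Psi,
\]
and integrating out the velocity variables yields $(\sfx^0,\sfx^1)_\sharp\ssigma=\ggamma$. So $\ssigma\in\Gamma(\Phi,\Psi)$ with the required spatial coupling.

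Next I would rewrite the target integral in terms of barycenters. By Fubini and the definition of $\bry{\Phi}, \bry{\Psi}$ (cf.~\eqref{eq:bry}),
\[
\int_{\TX\times\TX}\!\!\langle x-y,v-w\rangle\,\d\ssigma=\int_{\X^2}\!\Big\langle x-y,\bry\Phi(x)-\bry\Psi(y)\Big\rangle\,\d\ggamma(x,y).
\]
Now I would introduce the natural plan between the barycentric measures: define $\Theta:\X^2\to\TX\times\TX$ by $\Theta(x,y):=(x,\bry\Phi(x),y,\bry\Psi(y))$ and set $\ttheta:=\Theta_\sharp\ggamma$. Since $(\pi^0,\pi^1)_\sharp\ggamma$ has marginals $\sfx_\sharp\Phi$ and $\sfx_\sharp\Psi$, the marginals of $\ttheta$ on the two $\TX$ factors are exactly $(\ii_\X,\bry\Phi)_\sharp(\sfx_\sharp\Phi)=\bri\Phi$ and $(\ii_\X,\bry\Psi)_\sharp(\sfx_\sharp\Psi)=\bri\Psi$, so $\ttheta\in\Gamma(\bri\Phi,\bri\Psi)$. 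Since by hypothesis $\bri\Phi,\bri\Psi\in\bri\frF$ and $\bri\frF$ is totally $\lambda$-dissipative, applying Definition~\ref{def:total-dissipativity} to $\ttheta$ and performing the change of variables through $\Theta$ gives
\[
\int_{\X^2}\!\Big\langle \bry\Psi(y)-\bry\Phi(x),y-x\Big\rangle\,\d\ggamma(x,y)\le \lambda\int_{\X^2}|x-y|^2\,\d\ggamma(x,y),
\]
which is precisely \eqref{eq:dissbri} after switching signs in both slots of the inner product.

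Finally, the unconditional $\lambda$-dissipativity of $\frF$ is an immediate corollary: given any $\ggamma\in\prob_2(\X\times\X)$ and any $\Phi_0\in\frF[\pi^0_\sharp\ggamma]$, $\Phi_1\in\frF[\pi^1_\sharp\ggamma]$, the coupling $\ssigma$ produced above lies in $\Gamma(\Phi_0,\Phi_1)$, has $(\sfx^0,\sfx^1)_\sharp\ssigma=\ggamma$, and satisfies \eqref{eq:dissbri}, which matches the condition in Definition~\ref{def:u-dissipativity}. I expect the only mildly delicate point to be keeping the disintegration/change-of-variables bookkeeping clean; the actual structure of the argument is a direct transfer of total dissipativity from $\bri\frF$ to $\frF$ via the barycenter map.
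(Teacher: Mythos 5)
Your proof is correct and follows the same strategy as the paper's: verify the marginal identities, integrate out the velocity variables to reduce the inner‑product integral to its barycentric form, and then invoke the total $\lambda$-dissipativity of $\bri\frF$. The only difference is one of explicitness — the paper passes directly from the barycentric integral to the inequality "by total $\lambda$-dissipativity," while you spell out the intermediate step of pushing $\ggamma$ forward through $(x,y)\mapsto(x,\bry\Phi(x),y,\bry\Psi(y))$ to obtain a coupling $\ttheta\in\Gamma(\bri\Phi,\bri\Psi)$ to which Definition~\ref{def:total-dissipativity} applies; this is exactly the bookkeeping the paper leaves implicit, so the two arguments are essentially identical.
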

\begin{proof}
    It is clear that $\ssigma \in \Gamma(\Phi, \Psi)$ and that $(\sfx^0, \sfx^1)_\sharp \ssigma = \ggamma$. Let us show \eqref{eq:dissbri}:
\begin{equation}\label{eq:dissUD}
\begin{split}
\int_{\TX} \langle x - y, v - w \rangle \de \ssigma (x, v, y, w) &= \int_{\X^2} \int_{\X} \int_{\X}   \langle x - y, v - w \rangle \de \Phi_{x}(v) \de \Psi_{y}(w) \de \ggamma(x, y) \\
& = \int_{\X^2} \langle x-y, \bry\Phi(x)- \bry\Psi(y) \rangle \de \ggamma(x, y) \\
& \le  \lambda\int_{\X^2} |x-y|^2 \de \ggamma(x, y),
\end{split}
\end{equation}
by total $\lambda$-dissipativity of the barycenter.
\end{proof}

\begin{remark}\label{rem:riscr} For later use, we rewrite the expression \eqref{eq:dissbri} of  Proposition \ref{prop:dissbari}, which involves the plan $\ssigma\in\prob(\TX\times\TX)$, with a correspondent expression involving a plan in $\prob(\X^4)$. Let $\tau>0$, $\mu:=\sfx_\sharp \Phi$, $\nu:=\sfx_\sharp \Psi$, and set  $T_\tau\equiv T_\tau(\Phi)\in\prob(\X^2)$ and $\tilde{T}_\tau\equiv T_\tau(\Psi)\in\prob(\X^2)$ the single-step plans associated with $\Phi$ and $\Psi$, respectively, as in Definition \ref{def:step-plans}.
Define the plan
\[ \ttheta := \int_{\X^2} \left ( \delta_{x_0} \otimes T_{\tau,x_0} \right ) \otimes \left ( \delta_{y_0} \otimes \tilde{T}_{\tau,y_0} \right ) \de \ggamma(x_0,y_0) \in \prob(\X^4). \]
Then, under the assumptions of Proposition \ref{prop:dissbari}, we get
\[ \int_{\X^4} \langle x_0-y_0, \frac{x_1-x_0}{\tau} - \frac{y_1-y_0}{\tau} \rangle \de \ttheta(x_0, x_1, y_0, y_1) \le \lambda\int_{\X^2} |x_0-y_0|^2 \de \ggamma(x_0, y_0). \]
Indeed, we can prove it by disintegration arguments: denote by $\{\ggamma_{x_0}\}_{x_0 \in \X}$ and $\{\ggamma_{y_0}\}_{y_0 \in \X}$ the disintegrations of $\ggamma$ w.r.t.~the first and second marginals, respectively, i.e.
\[ \ggamma = \int_{\X} (\delta_{x_0} \otimes \ggamma_{x_0}) \de \mu(x_0), \quad \ggamma = \int_{\X} (\delta_{y_0} \otimes \ggamma_{y_0}) \de \nu(y_0). \]
Then
\begin{equation*}
\begin{split}
&\int_{\X^4} \langle x_0-y_0, \frac{x_1-x_0}{\tau} - \frac{y_1-y_0}{\tau} \rangle \de \ttheta(x_0, x_1, y_0, y_1)\\
&=\int_{\X^4} \langle x_0-y_0, \frac{x_1-x_0}{\tau} - \frac{y_1-y_0}{\tau} \rangle \de T_{\tau,x_0}(x_1)\de \tilde{T}_{\tau,y_0}(y_1)\de\ggamma(x_0,y_0)\\
&=\int_{\X^4} \langle x_0-y_0, \frac{x_1-x_0}{\tau} - \frac{y_1-y_0}{\tau} \rangle \de \tilde{T}_{\tau,y_0}(y_1)\de\ggamma_{x_0}(y_0)\de T_\tau(x_0,x_1)\\
&=\int_{\TX\times\X^2} \langle x_0-y_0, v_0 - \frac{y_1-y_0}{\tau} \rangle \de \tilde{T}_{\tau,y_0}(y_1)\de\ggamma_{x_0}(y_0)\de \Phi(x_0,v_0)\\
&=\int_{\X^4} \langle x_0-y_0, v_0 - \frac{y_1-y_0}{\tau} \rangle \de \tilde{T}_{\tau,y_0}(y_1)\de\ggamma_{x_0}(y_0)\de \Phi_{x_0}(v_0)\de\mu(x_0)\\
&=\int_{\X^4} \langle x_0-y_0, v_0 - \frac{y_1-y_0}{\tau} \rangle \de \tilde{T}_{\tau,y_0}(y_1)\de \Phi_{x_0}(v_0)\de\ggamma(x_0,y_0)\\
&=\int_{\X^4} \langle x_0-y_0, v_0 - \frac{y_1-y_0}{\tau} \rangle\de \Phi_{x_0}(v_0)\de\ggamma_{y_0}(x_0) \de \tilde{T}_\tau(y_0,y_1)\\
&=\int_{\TX\times\X^2} \langle x_0-y_0, v_0 - w_0 \rangle\de \Phi_{x_0}(v_0)\de\ggamma_{y_0}(x_0) \de \Psi(y_0,w_0)\\
&=\int_{\X^4} \langle x_0-y_0, v_0 - w_0 \rangle\de \Phi_{x_0}(v_0) \de \Psi_{y_0}(w_0)\de\ggamma(x_0,y_0),
\end{split}
\end{equation*}
and conclude as in \eqref{eq:dissUD}.
Otherwise, one can analogously prove that
\[\ttheta=\left(\sfx^0,\exp^\tau\circ(\sfx^0,\sfv^0),\sfx^1,\exp^\tau\circ(\sfx^1,\sfv^1)\right)_\sharp\ssigma.\]

\end{remark}
\medskip

In the following, we show that the $\lambda$-\EVI solution generated by  any maximal extension of the barycenter of a \MPVF $\frF$ is a $\lambda$-\EVI solution for the starting \MPVF $\frF$.  For the interested reader, we refer to \cite[Theorem 3.14]{CSS2grande} for conditions granting uniqueness of maximal extensions for a \MPVF.

\begin{proposition}\label{prop1} Let $\frF \subset \prob_2(\TX)$ be a \MPVF whose barycenter $\bri\frF$ is totally $\lambda$-dissipative. If $\bar\mu\in\overline{\dom(\frF)}$ and $\mu_t:= S_t(\bar\mu)$, $t \ge 0$, is as in Theorem \ref{thm:implift} for any maximal totally $\lambda$-dissipative extension of $\bri \frF$, then it is also a $\lambda$-\EVI solution for $\frF$.
\end{proposition}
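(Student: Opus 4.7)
The plan is to reduce the $\lambda$-EVI inequality for $\frF$ to the $\lambda$-EVI inequality for a maximal totally $\lambda$-dissipative extension $\hat{\frF}$ of $\bri\frF$, which holds by hypothesis via Theorem \ref{thm:implift}. The two key observations are: (i) for every $\Phi\in\frF$, the barycentric lift $\bri\Phi=(\ii_\X,\bry\Phi)_\sharp\mu$ (with $\mu:=\sfx_\sharp\Phi$) belongs to $\bri\frF\subset\hat{\frF}$ by definition of $\bri\frF$ in \eqref{eq:brimpvf} and maximality; (ii) $\sfx_\sharp\bri\Phi=\mu=\sfx_\sharp\Phi$, so the two Wasserstein terms $W_2^2(\mu_t,\sfx_\sharp\Phi)$ and $W_2^2(\mu_t,\sfx_\sharp\bri\Phi)$ coincide. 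With these in hand, applying the EVI for $\hat\frF$ at $\Psi=\bri\Phi$ immediately gives
\[
\tfrac12\tfrac{\d^+}{\d t}W_2^2(\mu_t,\sfx_\sharp\Phi)\le \lambda W_2^2(\mu_t,\sfx_\sharp\Phi)-\bram{\bri\Phi}{\mu_t},
\]
and the whole proof reduces to proving the pairing inequality $\bram{\Phi}{\mu_t}\le \bram{\bri\Phi}{\mu_t}$.

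For this inequality, I would proceed by exhibiting an admissible plan $\ssigma\in\Lambda(\Phi,\mu_t)$ whose value matches $\bram{\bri\Phi}{\mu_t}$. Fix an optimal $\ggamma\in\Gamma_o(\mu,\mu_t)$ realizing $\bram{\bri\Phi}{\mu_t}$; since $\bri\Phi$ is concentrated on the graph of the Borel map $\bry\Phi$, the plans in $\Lambda(\bri\Phi,\mu_t)$ are in bijection with $\Gamma_o(\mu,\mu_t)$ through $\ggamma\mapsto(\ii_\X,\bry\Phi,\ii_\X)_\sharp\ggamma$, so
\[
\bram{\bri\Phi}{\mu_t}=\int_{\X^2}\la x_0-x_1,\bry\Phi(x_0)\ra\,\d\ggamma(x_0,x_1).
\]
Then, using the disintegration $\Phi=\int_\X(\delta_{x_0}\otimes\Phi_{x_0})\,\d\mu(x_0)$ recalled in \eqref{eq:bry}, I set
\[
\ssigma:=\int_{\X^2}\bigl(\delta_{x_0}\otimes\Phi_{x_0}\otimes\delta_{x_1}\bigr)\,\d\ggamma(x_0,x_1)\in\prob(\TX\times\X).
\]
A direct check of the marginals shows $\ssigma\in\Gamma(\Phi,\mu_t)$ with $(\sfx^0,\sfx^1)_\sharp\ssigma=\ggamma\in\Gamma_o(\mu,\mu_t)$, hence $\ssigma\in\Lambda(\Phi,\mu_t)$; and integration of $\la x_0-x_1,v_0\ra$ against $\ssigma$ reproduces exactly the right-hand side displayed above, because $\int v_0\,\d\Phi_{x_0}(v_0)=\bry\Phi(x_0)$. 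Taking the infimum in the definition of $\bram{\Phi}{\mu_t}$ yields the claimed inequality.

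Combining the two steps, for every $\Phi\in\frF$ one obtains
\[
\tfrac12\tfrac{\d^+}{\d t}W_2^2(\mu_t,\sfx_\sharp\Phi)\le \lambda W_2^2(\mu_t,\sfx_\sharp\Phi)-\bram{\Phi}{\mu_t},
\]
which is exactly Definition \ref{def:evi} for $\frF$. The continuity of $t\mapsto\mu_t$ and the containment $\mu_t\in\overline{\dom(\frF)}$ come for free from Theorem \ref{thm:implift} once one notes that $\overline{\dom(\bri\frF)}\subseteq\overline{\dom(\frF)}$ (in fact they have the same domains since $\bri\Phi$ is defined only when $\Phi\in\frF$ exists). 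The main conceptual point — and the only nontrivial step — is the pairing inequality, which is really a Jensen-type statement saying that replacing a probability of velocities $\Phi_{x_0}$ with its mean can only \emph{increase} the dissipativity pairing, because the test is linear in $v_0$; everything else is bookkeeping about disintegrations and the fact that the optimal spatial couplings $\Gamma_o(\sfx_\sharp\Phi,\mu_t)$ coincide for $\Phi$ and $\bri\Phi$.
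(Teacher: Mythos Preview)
Your proposal is correct and follows essentially the same route as the paper's proof: both apply the $\lambda$-\EVI for the maximal extension at $\bri\Phi\in\hat\frF$, observe that $\sfx_\sharp\bri\Phi=\sfx_\sharp\Phi$, and reduce everything to the pairing inequality $\bram{\Phi}{\mu_t}\le\bram{\bri\Phi}{\mu_t}$, which is then proved by lifting an optimal $\ggamma\in\Gamma_o(\sfx_\sharp\Phi,\mu_t)$ realizing $\bram{\bri\Phi}{\mu_t}$ to a plan $\ssigma\in\Lambda(\Phi,\mu_t)$ via the disintegration $\{\Phi_{x_0}\}$ and using $\int v_0\,\d\Phi_{x_0}(v_0)=\bry\Phi(x_0)$. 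The constructions and computations match almost line by line.
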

\begin{proof}
Let $\Gg$ be  any maximal totally $\lambda$-dissipative extension of $\bri \frF$.
Notice that, since $\dom(\frF)=\dom(\bri{\frF})$, then $\overline{\dom(\frF)}\subset\overline{\dom(\Gg)}$. By Theorem \ref{thm:implift}, the curve $\mu$ is the unique $\lambda$-\EVI solution for $\Gg$ starting from $\bar\mu$. Thus, by Definition \ref{def:evi}, $\mu$ is also a $\lambda$-\EVI solution for $\bri\frF$. Let $t\in(0,+\infty)$ and $\Phi\in\frF$. Recalling that $\bri{\Phi}$ is concentrated on the map $\bry\Phi$ (cf. Definition \ref{def:baryc}), we have
 \begin{equation*}
    \begin{aligned}
      \frac12\frac\d{\d t}^+ W_2^2(\mu_t,\sfx_\sharp \Phi)
      &\le \lambda W_2^2(\mu_t,\sfx_\sharp \Phi)-
      \bram{\bri{\Phi}}{\mu_t}\\
      &= \lambda W_2^2(\mu_t,\sfx_\sharp \Phi)-\int_{\TX \times\X} \scalprod{\bry\Phi(x_0)}{x_0-x_1} \de \bar{\ggamma}(x_0,x_1),
    \end{aligned}
  \end{equation*}
  where
  \[\bar{\ggamma}=\argmin_{\ggamma\in\Gamma_o(\sfx_\sharp \Phi,\mu_t)}\int_{\TX \times\X} \scalprod{\bry\Phi(x_0)}{x_0-x_1} \de \ggamma(x_0,x_1).\]
  In order to prove that $\mu$ is a $\lambda$-\EVI for $\frF$, it is then sufficient to show that 
  \[\bram{\Phi}{\mu_t}\le \int_{\TX \times\X} \scalprod{\bry\Phi(x_0)}{x_0-x_1} \de \bar{\ggamma}(x_0,x_1).\]
  Let $\{\Phi_x\}_{x\in\X}\subset\prob_2(\X)$ be the disintegration of $\Phi$ w.r.t. the projection map $\sfx$ and $\ttheta\in\Lambda(\Phi,\mu_t)$ defined by $\ttheta:=\int \Phi_{x_0}(v_0)\,\d \bar{\ggamma}(x_0,x_1)$. We have
  \begin{align*}
  \bram{\Phi}{\mu_t}&\le \int_{\TX \times\X} \scalprod{v_0}{x_0-x_1} \de \ttheta(x_0,v_0,x_1)\\
  &=\int_{\TX} \scalprod{v_0}{x_0} \de \Phi(x_0,v_0)-\int_{\TX\times\X} \scalprod{v_0}{x_1} \de \ttheta(x_0,v_0,x_1)\\
  &=\int_{\X} \scalprod{\bry\Phi(x_0)}{x_0} \de [\sfx_\sharp\Phi](x_0)-\int_{\X\times\X} \scalprod{\bry\Phi(x_0)}{x_1} \de \bar{\ggamma}(x_0,x_1)\\
  &=\int_{\X\times\X} \scalprod{\bry\Phi(x_0)}{x_0-x_1} \de \bar{\ggamma}(x_0,x_1).
  \end{align*}
\end{proof}

We state the first convergence result of a sequence of probabilistic representations of the Explicit Euler scheme (corresponding to vanishing step sizes) for a metrically dissipative \MPVF $\frF$ towards the probabilistic representation of the Implicit Euler scheme for a maximal totally dissipative extension of the barycenter of $\frF$. This result, stated in Theorem \ref{thm:red}, is proved here assuming that the starting point $\bar\mu$ is finitely supported; we will extend it to a general $\bar\mu$ in Theorem \ref{thm:main}. To prove Theorem \ref{thm:red}, we apply the following Theorem \ref{thm:sticky1}, stated here in a simplified form and whose proof is postponed to Appendix \ref{sec:sticky}.

\begin{theorem}[Sticky particles representation]\label{thm:sticky1}
Let $\mu:[0,+\infty)\to\prob_2(\X)$ be such that
\begin{equation*}
    \#(\supp(\mu_t)) \text{ is finite and non-increasing w.r.t.~} t\ge 0.
\end{equation*}
Then there exists at most one $\eeta\in \prob(\rmC([0,+\infty);\X))$ such that $(\mathsf{e}_t)_\sharp \eeta=\mu_t$ for every $t\ge 0$.
\end{theorem}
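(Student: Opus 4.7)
The plan is to show that any $\eeta\in\prob(\rmC([0,+\infty);\X))$ with $(\mathsf{e}_t)_\sharp\eeta=\mu_t$ for every $t\ge 0$ is uniquely determined by $\mu$, by identifying $\eeta$-a.e.~curve with a deterministic \emph{forward atomic trajectory} depending only on $\mu$. First, the existence of such an $\eeta$ forces narrow continuity of $t\mapsto\mu_t$, since for any $f\in\rmC_b(\X)$ dominated convergence gives continuity of $t\mapsto\int f\,\d\mu_t=\int f(\gamma(t))\,\d\eeta(\gamma)$. Combined with the hypothesis that $N(t):=\#\supp(\mu_t)$ is finite and non-increasing, $N$ admits at most countably many jump times $0<t_1<t_2<\cdots$; on each maximal constancy interval $I_k$ where $N\equiv n_k$, narrow continuity of $\mu$ together with the \emph{no-splitting} property (a direct consequence of monotonicity of $N$) and the simple connectedness of $I_k$ produces a continuous labeling of the atoms $z_1^k,\dots,z_{n_k}^k:I_k\to\X$, pairwise distinct at every time. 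At each merger time $t_k$, the one-sided limits of $z_i^{k-1}(t)$ as $t\to t_k^-$ collapse the $n_{k-1}$ trajectories onto the $n_k$ atoms of $\mu_{t_k}$, inducing a surjection $\psi_k:\{1,\dots,n_{k-1}\}\to\{1,\dots,n_k\}$. Iterating through the $\psi_k$'s, one constructs for each $x_0\in\supp(\mu_0)$ a unique continuous trajectory $\gamma_{x_0}:[0,+\infty)\to\X$ with $\gamma_{x_0}(0)=x_0$ and $\gamma_{x_0}(t)\in\supp(\mu_t)$ for every $t$, depending only on $\mu$.

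Next, I would show that $\eeta$-a.e.~curve $\gamma$ equals $\gamma_{\gamma(0)}$. A Fubini argument on $\eeta\otimes\mathrm{Leb}$, using $\eeta(\{\gamma(t)\in\supp(\mu_t)\})=1$ for every $t\ge0$, yields $\gamma(t)\in\supp(\mu_t)$ for $\mathrm{Leb}$-a.e.~$t\ge0$ and $\eeta$-a.e.~$\gamma$. On each $I_k$, the continuity of $\gamma$ and the pairwise separation of the $z_i^k$'s then force $\gamma\equiv z_{i_k}^k$ on $I_k$ for a single label $i_k$; the labels are propagated forward through the merger maps $\psi_k$, and $i_0$ is pinned down by $\gamma(0)=z_{i_0}^0(0)$. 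Consequently $\eeta=\Gamma_\sharp\mu_0$, where $\Gamma:\supp(\mu_0)\to\rmC([0,+\infty);\X)$ is the $\mu$-determined map $x_0\mapsto\gamma_{x_0}$, and uniqueness follows. As a byproduct, the $\eeta$-mass of curves following $z_i^k$ on $I_k$ equals the (necessarily constant) coefficient $a_i^k=\mu_t(\{z_i^k(t)\})$ for every $t\in I_k$, reconciling the decomposition along the atomic tree.

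The main obstacle is the rigorous construction of the atomic tree structure in the first paragraph: one must verify that narrow continuity of a curve of finitely-supported probability measures with non-increasing support cardinality admits, on each constancy interval, a continuous disjoint selection of its atoms, and that the merger maps are well-defined via one-sided limits. This is a delicate but elementary topological/combinatorial argument whose key ingredient is the no-splitting consequence of the monotonicity of $N$, which rules out the only obstruction to continuous labeling and to the well-posedness of the forward tracking.
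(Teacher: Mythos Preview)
Your strategy is workable, but there is a genuine gap in the order of the argument. You propose to build the atomic tree---in particular the merger maps $\psi_k$ via one-sided limits of the $z_i^{k-1}$---from narrow continuity of $\mu$ and monotonicity of $N$ alone, and only afterwards bring in $\eeta$. This fails: take $\X=\R$, $\mu_t=\tfrac{1+t}{2}\delta_0+\tfrac{1-t}{2}\delta_{1/(1-t)}$ for $t\in[0,1)$ and $\mu_t=\delta_0$ for $t\ge1$. Then $\mu$ is narrowly continuous, $N$ drops from $2$ to $1$ at $t=1$, yet the second atom escapes to infinity and has no left limit. Of course this $\mu$ admits no lift $\eeta$, and that is exactly the point: the existence of one-sided limits at merger times is a consequence of the existence of $\eeta$, not of narrow continuity plus no-splitting. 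The repair is to interleave the two paragraphs. First obtain the continuous labeling on each constancy interval $I_k$ (this needs only narrow continuity); then run your Fubini/tracking argument to see that $\eeta$-a.e.\ $\gamma$ coincides with a single $z_i^k$ on each $I_k$; continuity of such $\gamma$ at $t_k$ then \emph{produces} the limit $z_i^{k-1}(t_k^-)\in\supp(\mu_{t_k})$, and since every atom carries positive $\eeta$-mass, every $z_i^{k-1}$ converges. Only now are the $\psi_k$ well-defined and the map $x_0\mapsto\gamma_{x_0}$ canonical (and, a posteriori, determined by $\mu$); your conclusion $\eeta=\Gamma_\sharp\mu_0$ then goes through.

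The paper's route is quite different and avoids the tree construction altogether. It works directly with $\supp(\eeta)$: from $\sfe_t(\supp(\eeta))=\supp(\mu_t)$ one proves a \emph{sticky} lemma---if two curves in $\supp(\eeta)$ meet at some time, they coincide thereafter---by a short contradiction: were they to separate just after their last common time $\bar t$, one could pick curves in $\supp(\eeta)$ through the remaining $K-1$ atoms of $\mu_{\bar t}$ and obtain $K+1$ distinct points in $\supp(\mu_t)$ for some $t>\bar t$, violating $N(t)\le K$. Applied at $\bar s=0$ this gives $\#\supp(\eeta)=N(0)$ and hence $\eeta=\sum_i a_i\delta_{\gamma_i}$; uniqueness then follows from an averaging trick, applying the same structure result to $\tfrac12(\eeta_1+\eeta_2)$. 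This bypasses all issues of labelings, mass variation, and one-sided limits; your approach, once repaired, yields a more explicit description of $\eeta$ but at the cost of substantially more bookkeeping.
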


\begin{theorem}\label{thm:red} Let $\frF \subset \prob_2(\TX)$ be a \MPVF whose barycenter $\bri\frF$ is totally $\lambda$-dissipative. Let $\bar\mu \in \dom(\frF)$ be a measure with finite support. Let $\N\ni h\mapsto \tau(h)\in\R_+$ be a vanishing sequence of
  time steps, and let $(M_h, \frF_h) \in \mathscr E(\bar \mu,\tau(h),T,L)$, $T, L>0$, be interpolation of the Explicit Euler scheme for $\frF$, for any $h\in\N$. Let
\begin{enumerate}
    \item $\eeta_h$ be satisfying \eqref{item1-etatau},\eqref{item2-etatau} of Proposition \ref{prop:exlift} for $(M_h, \frF_h)$;
    \item $\eeta$ be as in Theorem \ref{thm:implift} for any maximal totally $\lambda$-dissipative extension of $\bri \frF$.
\end{enumerate}
Then $W_{2, \infty}(\eeta_h, \eeta) \to 0$ as $h \to + \infty$.
\end{theorem}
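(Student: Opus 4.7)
The strategy is to prove that every $W_{2,\infty}$-cluster point of $(\eeta_h)_h$ coincides with $\eeta$, and then conclude convergence of the whole sequence by the relative compactness already provided by Proposition~\ref{prop:tight}. The crucial ingredient is the sticky particles Theorem~\ref{thm:sticky1}, which upgrades equality of one-dimensional time-marginals to equality of measures on paths.

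First, I would observe that total $\lambda$-dissipativity of $\bri\frF$ yields via Proposition~\ref{prop:dissbari} the (metric) $\lambda$-dissipativity of $\frF$ in the sense of \eqref{eq:33}; thus Theorem~\ref{thm:apriori-estimate} and Proposition~\ref{prop:tight} apply to the scheme $(M_h,\frF_h)$. Since $\bar\mu$ has finite, hence bounded, support, the last assertion of Proposition~\ref{prop:tight} yields relative compactness of $(\eeta_h)_h$ in $(\prob_2(\rmC([0,T];\X)), W_{2,\infty})$. Let $\eeta_\ast$ denote an arbitrary cluster point along a subsequence $h_k\to\infty$.

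Next, I would identify the time-marginals of $\eeta_\ast$. By Proposition~\ref{prop:tight}(2), the curve $\mu^\ast_t := (\sfe_t)_\sharp \eeta_\ast$ is the unique $\lambda$-\EVI solution for $\frF$ starting from $\bar\mu$. On the other hand, pick (by Zorn's lemma) any maximal totally $\lambda$-dissipative extension $\Gg$ of $\bri\frF$ and let $S_t$ be the associated semigroup of Theorem~\ref{thm:implift}, so that $\mu_t := S_t(\bar\mu) = (\sfe_t)_\sharp\eeta$ on $[0,T]$. Proposition~\ref{prop1} tells us that $t\mapsto \mu_t$ is also a $\lambda$-\EVI solution for $\frF$ (starting from the same datum $\bar\mu$). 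Uniqueness of $\lambda$-\EVI solutions therefore forces $\mu^\ast_t = \mu_t$ for every $t\in[0,T]$, i.e.~$(\sfe_t)_\sharp \eeta_\ast = (\sfe_t)_\sharp \eeta$ for all such $t$.

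The decisive step is to promote this equality of evaluations to the equality $\eeta_\ast = \eeta$. Since $\bar\mu$ has finite support, the last assertion of Theorem~\ref{thm:implift} ensures that $\#(\supp(\mu_t))$ is finite and non-increasing in $t\ge 0$. Theorem~\ref{thm:sticky1} then uniquely identifies the probability measure on paths with prescribed evaluation marginals $(\mu_t)_t$ (either by restricting its proof to the compact interval $[0,T]$, or by extending $\eeta_\ast$ and $\eeta$ to measures on $\rmC([0,+\infty);\X)$ via propagation of the endpoint by $S_\cdot$ past time $T$). Hence $\eeta_\ast=\eeta$, and since the cluster point was arbitrary the whole sequence $(\eeta_h)_h$ converges to $\eeta$ in $W_{2,\infty}$.

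\textbf{Main obstacle.} The only subtle step is the last one. Propositions~\ref{prop:tight} and~\ref{prop1} together only deliver coincidence of the one-dimensional time-marginals of $\eeta_\ast$ and $\eeta$, and in general very different path measures can share the same evaluations. The finite-support hypothesis on $\bar\mu$, together with the conservation-type property of Theorem~\ref{thm:implift}, is precisely what makes the sticky particles Theorem~\ref{thm:sticky1} applicable and lifts the marginal identification to the level of joint laws on curves. This is why the present statement is restricted to finitely supported initial data, a limitation that will be removed in Theorem~\ref{thm:main} by approximation.
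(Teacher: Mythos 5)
Your argument is correct and follows the same route as the paper: use Proposition~\ref{prop:tight} for relative $W_{2,\infty}$-compactness and identification of the time-marginals with the $\lambda$-\EVI solution, use Proposition~\ref{prop1} to show $\eeta$ has the same marginals, invoke the last assertion of Theorem~\ref{thm:implift} for the finite and non-increasing support cardinality, and then apply the sticky particles Theorem~\ref{thm:sticky1} to upgrade equality of evaluations to equality of path measures. The only addition you make is to flag explicitly the $[0,T]$ vs.\ $[0,+\infty)$ mismatch in applying Theorem~\ref{thm:sticky1}, which the paper silently absorbs and which you handle correctly.
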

\begin{proof}
    By Proposition \ref{prop:tight}, we know that, up to a subsequence, $\eeta_h$ converges as $h \to + \infty$ in $W_{2, \infty}$ to a measure $\tilde{\eeta} \in \prob_2(\rmC([0,T];\X)$ such that $(\sfe_t)_\sharp \tilde{\eeta}= \mu_t$ for every $t \in [0,T]$, where $\mu$  is the unique $\lambda$-\EVI solution in $[0,T]$ for $\frF$ starting from $\bar\mu$. 
    By Proposition \ref{prop1}, we deduce that $(\sfe_t)_\sharp \tilde{\eeta} = (\sfe_t)_\sharp \eeta$ for every $t \in [0,T]$. Moreover, by Theorem \ref{thm:implift}, we have that $\#( \supp(\mu_t))$ is finite and non-increasing, so that we conclude by Theorem \ref{thm:sticky1} that $\tilde{\eeta} = \eeta$. This also gives the convergence without passing to subsequences.
\end{proof}

\begin{theorem}\label{thm:giulia} Let $\frF\subset \prob_2(\TX)$ be a \MPVF whose barycenter $\bri\frF$ is totally $\lambda$-dissipative. Let $\tau, T, L>0$ be such that $\lambda_+\tau\le2$ and $\tau<1$. Let
\begin{enumerate}[label=(\roman*)]
\item $\bar\mu, \tilde\mu \in \dom(\frF)$ with $W_2(\bar\mu, \tilde{\mu}) <1$;
\item $(M_\tau, \frF_\tau,\eeta_\tau)\in \mathscr T(\bar\mu,\tau,T,L)$ and  $(\tilde M_\tau, \tilde\frF_\tau,\tilde{\eeta}_\tau)\in \mathscr T(\tilde\mu,\tau,T,L)$.
\end{enumerate}
Then there exists a constant $C$ (depending only on $L,T,\lambda$) such that
\[W_{2, \infty}(\eeta_\tau, \tilde\eeta_\tau)\le C \left(W_2^{\frac{1}{2}}(\bar\mu,\tilde\mu)+\tau^{\frac{1}{4}}\right).\]
\end{theorem}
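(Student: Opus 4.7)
The plan is to construct a coupling $\boldsymbol{\beta}_\tau \in \Gamma(\aalpha_\tau, \tilde{\aalpha}_\tau)$ between the two multi-step plans that is Markovian and dissipative at each step, and then control $\max_n |x_n - y_n|^2$ in $L^2(\boldsymbol{\beta}_\tau)$ by a telescoping argument. Since the affine interpolation map $G$ of Definition \ref{def:intEE} satisfies $\|G(x_0,\dots,x_N) - G(y_0,\dots,y_N)\|_\infty = \max_{0 \le n \le N}|x_n - y_n|$ (both interpolants live on the same grid), the pushforward of $\boldsymbol{\beta}_\tau$ under $(x_\bullet, y_\bullet) \mapsto (G(x_\bullet), G(y_\bullet))$ is a coupling of $\eeta_\tau$ and $\tilde\eeta_\tau$, giving
\[
W_{2,\infty}^2(\eeta_\tau, \tilde\eeta_\tau) \le \int \max_{0 \le n \le N}|x_n-y_n|^2\,\d\boldsymbol{\beta}_\tau.
\]
I build $\boldsymbol{\beta}_\tau$ inductively: set $\boldsymbol{\beta}_\tau^{(0)} \in \Gamma_o(\bar\mu, \tilde\mu)$; given $\boldsymbol{\beta}_\tau^{(n)} \in \Gamma(\aalpha_\tau^n, \tilde{\aalpha}_\tau^n)$, let $\ggamma_n \in \Gamma(M_\tau^n, \tilde M_\tau^n)$ be its marginal on the last coordinate of each factor, and apply Proposition \ref{prop:dissbari} in the form of Remark \ref{rem:riscr} with $\Phi = \Phi_\tau^n$, $\Psi = \tilde\Phi_\tau^n$ and base coupling $\ggamma_n$. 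This yields $\ttheta_n \in \prob(\X^4)$ whose $(\pi^{0,1})$- and $(\pi^{2,3})$-marginals are the single-step plans $T_\tau^n$ and $\tilde T_\tau^n$, whose $(\pi^{0,2})$-marginal is $\ggamma_n$, and which satisfies
\[
\int \bigl\langle x_0-y_0,\ \tfrac{x_1-x_0}{\tau}-\tfrac{y_1-y_0}{\tau}\bigr\rangle\,\d\ttheta_n \le \lambda \int |x_0-y_0|^2\,\d\ggamma_n.
\]
Gluing $\ttheta_n$ onto $\boldsymbol{\beta}_\tau^{(n)}$ along $\ggamma_n$ produces $\boldsymbol{\beta}_\tau^{(n+1)} \in \Gamma(\aalpha_\tau^{n+1}, \tilde{\aalpha}_\tau^{n+1})$; correctness of the marginals follows from the Markovian structure of $\aalpha_\tau$ and $\tilde{\aalpha}_\tau$ via Lemma \ref{lem:Ralpha}. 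Set $\boldsymbol{\beta}_\tau := \boldsymbol{\beta}_\tau^{(N)}$ with $N := \finalstep{T}{\tau}$.

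Writing $Z_n := |x_n-y_n|$ and $\Delta_n := (x_{n+1}-x_n) - (y_{n+1}-y_n)$, expand $Z_{n+1}^2 - Z_n^2 = 2\langle x_n-y_n, \Delta_n\rangle + |\Delta_n|^2$. Integrating against $\boldsymbol{\beta}_\tau$, the dissipativity above gives $\int \langle x_n-y_n, \Delta_n\rangle\,\d\boldsymbol{\beta}_\tau \le \lambda\tau \int Z_n^2\,\d\boldsymbol{\beta}_\tau$, while the stability $|\Phi_\tau^n|_2, |\tilde\Phi_\tau^n|_2 \le L$ and $|\Delta_n|^2 \le 2\tau^2(|v_0|^2 + |v_1|^2)$ yield $\int |\Delta_n|^2\,\d\boldsymbol{\beta}_\tau \le 4\tau^2 L^2$. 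Hence
\[
\int Z_{n+1}^2\,\d\boldsymbol{\beta}_\tau \le (1+2\lambda\tau)\int Z_n^2\,\d\boldsymbol{\beta}_\tau + 4\tau^2 L^2.
\]
The hypothesis $\lambda_+\tau \le 2$ makes $(1+2\lambda\tau)^n \le e^{2\lambda_+ T}$ for $0 \le n \le N$, so a discrete Gr\"onwall iteration gives $\int Z_n^2\,\d\boldsymbol{\beta}_\tau \le C_1(W_2^2(\bar\mu, \tilde\mu) + \tau)$ for every $0 \le n \le N$, with $C_1 = C_1(T, \lambda, L)$.

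To pass from a pointwise-in-$n$ estimate to one on the maximum, I telescope:
\[
\max_{0\le n\le N} Z_n^2 \le Z_0^2 + \sum_{n=0}^{N-1}\bigl(2 Z_n |\Delta_n| + |\Delta_n|^2\bigr).
\]
Integrating against $\boldsymbol{\beta}_\tau$, applying Cauchy--Schwarz to $\int Z_n |\Delta_n|\,\d\boldsymbol{\beta}_\tau$, and using the previous estimates with $N\tau \le T+\tau$, I obtain
\[
\int \max_{0\le n\le N} Z_n^2\,\d\boldsymbol{\beta}_\tau \le W_2^2(\bar\mu,\tilde\mu) + 4L(T+\tau)\sqrt{C_1\bigl(W_2^2(\bar\mu, \tilde\mu)+\tau\bigr)} + 4L^2(T+\tau)\tau.
\]
Since $W_2(\bar\mu,\tilde\mu) < 1$ and $\tau < 1$, I may use $W_2^2 \le W_2$, $\tau \le \sqrt\tau$, and $\sqrt{W_2^2 + \tau} \le W_2 + \sqrt\tau$ to bound the right-hand side by $C_3(W_2(\bar\mu,\tilde\mu) + \sqrt\tau)$ for a constant $C_3$ depending only on $T, \lambda, L$. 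Combining with the first paragraph and using $\sqrt{a+b} \le \sqrt a + \sqrt b$ concludes
\[
W_{2,\infty}(\eeta_\tau, \tilde\eeta_\tau) \le \sqrt{C_3}\,\bigl(W_2^{1/2}(\bar\mu,\tilde\mu) + \tau^{1/4}\bigr).
\]

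The hard part is the iterated construction in the first paragraph: Proposition \ref{prop:dissbari} must be applied at each step against the \emph{current} marginal $\ggamma_n$ of $\boldsymbol{\beta}_\tau^{(n)}$, not against a freshly chosen optimal coupling of $M_\tau^n$ and $\tilde M_\tau^n$, as otherwise the Markovian structure of $\boldsymbol{\beta}_\tau$ that underlies the telescoping would be destroyed. This is precisely what the total dissipativity of the barycenter $\bri\frF$ enables---the inequality of Proposition \ref{prop:dissbari}/Remark \ref{rem:riscr} holds against an \emph{arbitrary} base coupling $\ggamma_n$, not only against optimal ones---so that mere metric $\lambda$-dissipativity of $\frF$ would be insufficient to propagate the estimate through the discrete dynamics.
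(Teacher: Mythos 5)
Your coupling construction is exactly the paper's ``Claim 1'': the paper builds $\ttheta^n \in \prob(\X^{n+1}\times\X^{n+1})$ by the same recursion (gluing $\ttheta^n$ with the product $T^n_{\tau,x_n}\otimes\tilde T^n_{\tau,y_n}$ along the $(x_n,y_n)$-marginal), starts from an optimal $\rrho^0\in\Gamma_o(\bar\mu,\tilde\mu)$, and records the same per-step dissipativity inequality \eqref{eq:dissii}; so up to relabeling, $\boldsymbol\beta_\tau^{(n)}=\ttheta^n$. Your closing observation---that total dissipativity of the barycenter is what permits the inequality of Remark \ref{rem:riscr} to hold against the \emph{current} (non-optimal) marginal $\ggamma_n=\rrho^n$---is precisely the point the paper exploits, and indeed where mere metric dissipativity of $\frF$ would break the iteration.

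Where you genuinely diverge is in the passage from the coupling $\boldsymbol\beta_\tau=\ttheta^N$ to the $W_{2,\infty}$ bound. The paper pushes $\ttheta^N$ forward to $\Llambda\in\Gamma(\eeta_\tau,\tilde\eeta_\tau)$ by $J=(G,G)$, works in continuous time with $\sigma^2(t)=\int|\sfe_t(\gamma)-\sfe_t(\tilde\gamma)|^2\,\d\Llambda$, uses a concavity argument from \cite[Proposition 3.4(2)]{CSS} to upgrade the Dini-derivative bound at the nodes $k\tau$ to a differential inequality for all $t$, applies the Gronwall lemma of \cite[Lemma B.1]{CSS}, and then invokes the $L^\infty$--$L^2$--$H^1$ interpolation inequality from Brezis to pass from separate $L^2$ and $H^1$ bounds on $\gamma-\tilde\gamma$ to a sup-norm bound. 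You instead exploit the elementary fact that for two piecewise-affine paths on the same grid $\|G(x_\bullet)-G(y_\bullet)\|_\infty=\max_n|x_n-y_n|$ (the norm of an affine function is convex, so the sup on each cell is at the endpoints), turn the problem into a bound on $\int\max_n Z_n^2\,\d\boldsymbol\beta_\tau$, and control that by a telescoping sum combined with Cauchy--Schwarz and a discrete Gronwall iteration for $\int Z_n^2$. Both routes produce the same $W_2^{1/2}+\tau^{1/4}$ rate; yours is more elementary, entirely discrete, and avoids both the concavity trick and the interpolation inequality. (The hypothesis $\lambda_+\tau\le 2$, which the paper uses explicitly in relating $\sigma(t)$ to $\sigma(k\tau)$, plays no essential role in your discrete argument---one only needs $1+2\lambda\tau\le e^{2\lambda_+\tau}$, and when $1+2\lambda\tau<0$ the recursion trivially collapses---so your proof is, if anything, slightly more robust with respect to the constants.) The argument is correct.
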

\begin{proof}
We set
$N:=N(T,\tau)$ and, for $k=0, \dots, N-1$, we set $\Phi^k := \frF_\tau(\tau k)$, $\tilde\Phi^{k} := \tilde\frF_\tau(\tau k)$;  while for $k=0, \dots, N$, we set $\mu^k:= M_\tau(\tau k)$, and $\tilde{\mu}^k:= \tilde{M}_\tau(\tau k)$; we also fix $\rrho^0\in\Gamma_o(\bar\mu,\tilde\mu)$.  Following Definition \ref{def:step-plans}, we define the single-step plans $T_\tau^k:=T_\tau(\Phi^k)$, $\tilde{T}_\tau^k:=T_\tau(\tilde{\Phi}^k)$ and the multi-step plans $\aalpha_\tau:=\aalpha_\tau\left((\Phi^k)_{0\le k\le N-1}\right)\in \prob(\X^{N+1})$, $\tilde{\aalpha}_\tau:=\aalpha_\tau\left((\tilde\Phi^k)_{0\le k\le N-1}\right)\in \prob(\X^{N+1})$.
Recall that, by Definition \ref{def:intEE}, $\eeta_\tau = G_\sharp \alpha_\tau$, $\tilde \eeta_\tau = G_\sharp \tilde \alpha_\tau$.

For every $n \in \{1, \dots, N\}$, define $\proj_n^0,\proj_n^1:\X^{n+1} \times \X^{n+1}\to\X^{n+1}$ as the projection maps 
\[\proj_n^0(x_0,\dots, x_n, y_0, \dots, y_n):= (x_0, \dots, x_n), \quad\proj_n^1(x_0,\dots, x_n, y_0, \dots, y_n):= (y_0, \dots, y_n).\]
and the restriction maps $\Rest_n:\X^{N+1}\to\X^{n+1}$, $\Rest_n(x_0,\dots,x_N):=(x_0,\dots,x_n)$.
We start with the following claim. \\

\textit{Claim 1}:  for every $n \in \{1, \dots, N\}$, there exists $\ttheta^n \in \prob(\X^{n+1} \times \X^{n+1})$ such that 
\[({\proj_n^{0}})_\sharp \ttheta^n = (\Rest_n)_\sharp \aalpha_\tau,\quad ({\proj_n^{1}})_\sharp \ttheta^n = (\Rest_n)_\sharp \tilde{\aalpha}_\tau,\quad ({\proj_0^{0}},{\proj_0^{1}})_\sharp \ttheta^n=\rrho_0, \]
 and 
\begin{equation}\label{eq:dissii}
\int \langle x_k-y_k, \frac{x_{k+1}-x_k}{\tau} - \frac{y_{k+1}-y_k}{\tau} \rangle \de \ttheta^n \le \lambda\int_{\X^2} |x_k-y_k|^2 \de \ttheta^n \quad \text{ for every } k=0, \dots, n-1.
\end{equation}
\textit{Proof of claim 1}: we prove it by induction, starting from $n=1$. In this case it is sufficient to define
\[ \ttheta^1 := \int \left ( \delta_{x_0} \otimes T^0_{\tau, x_0}  \right ) \otimes \left ( \delta_{y_0} \otimes \tilde{T}^0_{\tau, y_0} \right ) \de \rrho^0(x_0, y_0)\in \prob(\X^{2} \times \X^{2})\]
and use Remark \ref{rem:riscr}.  Notice that $({\proj_1^{0}})_\sharp \ttheta^1 = T_\tau^0=(\Rest_1)_\sharp \aalpha_\tau$ and $({\proj_1^1})_\sharp \ttheta^1 = \tilde{T}_\tau^0=(\Rest_1)_\sharp \tilde\aalpha_\tau$, by Lemma \ref{lem:Ralpha}.
Assume the statement to be true for some $n \in \{1 \dots, N-1\}$ and define
\[ \ttheta^{n+1}:= \int \left ( \delta_{x_0, \dots, x_{n}} \otimes T^n_{\tau, x_n}  \right ) \otimes \left ( \delta_{y_0, \dots, y_n} \otimes \tilde{T}^n_{\tau, y_n} \right ) \de \ttheta^n(x_0, \dots, x_n, y_0, \dots, y_n)\in\prob(\X^{n+2}\times\X^{n+2}).\]
By construction,  recalling Definition \ref{def:step-plans} of $\aalpha_\tau$ and $\tilde \aalpha_\tau$ and Lemma \ref{lem:Ralpha}, we have that $({\proj_{n+1}^{0}})_\sharp \ttheta^{n+1}=\aalpha_\tau^{n+1}=(\Rest_{n+1})_\sharp\aalpha_\tau$ and $({\proj_{n+1}^{1}})_\sharp \ttheta^{n+1}=\tilde{\aalpha}_\tau^{n+1}=(\Rest_{n+1})_\sharp\tilde{\aalpha}_\tau$.
If $k \in \{0, \dots, n-1\}$,  the validity of \eqref{eq:dissii} for $\ttheta^{n+1}$ and the fact that $({\proj_0^{0}},{\proj_0^{1}})_\sharp \ttheta^{n+1}=\rrho^0$ follow by the induction step and the fact that  the restriction from $\X^{n+2}\times\X^{n+2}$ to $\X^{n+1}\times \X^{n+1}$ of $\ttheta^{n+1}$, i.e. $(\Rest_n\times\Rest_n)_\sharp\ttheta^{n+1}$, is equal to $\ttheta^n$. If $k=n$, then 
\begin{align*}
    &\int \langle x_n-y_n, \frac{x_{n+1}-x_n}{\tau} - \frac{y_{n+1}-y_n}{\tau} \rangle \de \ttheta^{n+1}(x_0, \dots, x_{n+1}, y_0, \dots, y_{n+1}) \\
    &= \int \langle x_n-y_n, \frac{x_{n+1}-x_n}{\tau} - \frac{y_{n+1}-y_n}{\tau} \rangle \de T^n_{\tau, x_n}(x_{n+1}) \de \tilde T^n_{\tau, y_n}(y_{n+1}) \de \rrho^n(x_n, y_n),
\end{align*}
being $\rrho^n$ the marginal of $\ttheta^n$ in the variables $(x_n, y_n)$. Hence, \eqref{eq:dissii} follows again by applying Remark \ref{rem:riscr}. This concludes the induction step and the proof of the claim.

\medskip

We consider the map $J: \X^{N+1} \times \X^{N+1} \to \rmC([0,T]; \X) \times \rmC([0,T]; \X) $ defined as
\[ J(x_0, \dots, x_n, y_0, \dots, y_n):= (G(x_0, \dots, x_n), G(y_0, \dots, y_n)),\]
being $G$ defined as in Definition \ref{def:intEE}. Let us set $\Llambda := J_\sharp \ttheta^N$; using Claim 1, it is immediate to check that $\Llambda \in \Gamma(\eeta_\tau, \tilde{\eeta}_\tau)$ and 
\begin{equation}\label{eq:finaldiss}
\begin{split}
&\int \langle \sfe_{k\tau}(\gamma)-\sfe_{k\tau}(\tilde \gamma), \frac{\sfe_{(k+1)\tau}(\gamma)-\sfe_{k\tau}(\gamma)}{\tau} - \frac{\sfe_{(k+1)\tau}(\tilde\gamma)-\sfe_{k\tau}(\tilde \gamma)}{\tau} \rangle \de \Llambda(\gamma, \tilde \gamma) \\
& \quad \le \lambda\int |\sfe_{k\tau}(\gamma)-\sfe_{k\tau}(\tilde \gamma)|^2 \de \Llambda(\gamma, \tilde \gamma) 
\end{split}
\end{equation}
for every $k=0, \dots, N-1$. Let us set 
\[\sigma^2(t):=\displaystyle\int |\sfe_t(\gamma)-\sfe_t(\tilde\gamma)|^2\d\Llambda(\gamma,\tilde\gamma)=\|\sfe_t\circ\pi^0-\sfe_t\circ\pi^1\|^2_{\Llambda}, \quad t \in [0,T],\]
 where, in this context, $\pi^0$ and $\pi^1$ are defined on $\rmC([0,T]; \X)\times\rmC([0,T]; \X)$ and are the projection maps on the first and second component, respectively.
We then get, for any $k=0,\dots, N-1$,
\begin{align*}
\frac{\d }{\d t}\sigma^2(t)\bigg|_{t=k\tau+}&=\lim_{h\to0^+}\frac{1}{h}\,\int \left(|\sfe_{k\tau+h}(\gamma)-\sfe_{k\tau+h}(\tilde\gamma)|^2-|\sfe_{k\tau}(\gamma)-\sfe_{k\tau}(\tilde\gamma)|^2\right)\d\Llambda(\gamma,\tilde\gamma)\\
&=2\,\int \langle \sfe_{k\tau}(\gamma)-\sfe_{k\tau}(\tilde \gamma), \frac{\sfe_{(k+1)\tau}(\gamma)-\sfe_{k\tau}(\gamma)}{\tau} - \frac{\sfe_{(k+1)\tau}(\tilde\gamma)-\sfe_{k\tau}(\tilde \gamma)}{\tau} \rangle \de \Llambda(\gamma, \tilde \gamma)\\
& \quad + \lim_{h \to 0^+} h \int \left | \frac{\sfe_{(k+1)\tau}(\gamma)-\sfe_{k\tau}(\gamma)}{\tau} - \frac{\sfe_{(k+1)\tau}(\tilde\gamma)-\sfe_{k\tau}(\tilde \gamma)}{\tau} \right |^2 \de \Llambda(\gamma, \tilde \gamma) \\
& \le 2\lambda  \int |\sfe_{k\tau}(\gamma)-\sfe_{k\tau}(\tilde\gamma)|^2\d\Llambda(\gamma,\tilde\gamma) \\
& = 2\lambda \sigma^2(k\tau),
\end{align*}
where we have used \eqref{eq:finaldiss} and the fact that $\Llambda$ is concentrated on pair of curves which are affine in every interval $[k\tau, (k+1)\tau] \cap [0,T]$, $k=0, \dots, N-1$.

Arguing as in \cite[Proposition 3.4(2)]{CSS}, we notice that in every interval $[k\tau,(k+1)\tau] \cap [0,T]$ the function $t\mapsto \sigma^2(t)-4L^2 (t-k\tau)^2$ is concave. We then obtain
\begin{equation}\label{eq:almost-GW}
\frac{\d }{\d t}\sigma^2(t)\le 2\lambda \sigma^2(k\tau)+8L^2\tau,
\end{equation}
for every $t\in[0,T]$, with possible countable exceptions. We now proceed as in \cite[proof of Proposition 6.3]{CSS}, noting that, whenever $t \in [k \tau, (k+1)\tau) \cap [0,T]$ for some $k=0, \dots, N-1$, we have
\begin{align*}
&|\sigma(t)-\sigma(k\tau)|\\
&=\left|\|\sfe_t\circ\pi^0-\sfe_t\circ\pi^1\|_{\Llambda}-\|\sfe_{k\tau}\circ\pi^0-\sfe_{k\tau}\circ\pi^1\|_{\Llambda}\right|\\
&=\left|\|\sfe_t\circ\pi^0-\sfe_t\circ\pi^1\|_{\Llambda}-\|\sfe_t\circ\pi^1-\sfe_{k\tau}\circ\pi^0\|_{\Llambda}+\|\sfe_t\circ\pi^1-\sfe_{k\tau}\circ\pi^0\|_{\Llambda}-\|\sfe_{k\tau}\circ\pi^0-\sfe_{k\tau}\circ\pi^1\|_{\Llambda}\right|\\
&\le \|\sfe_t\circ\pi^0-\sfe_{k\tau}\circ\pi^0\|_{\Llambda}+\|\sfe_t\circ\pi^1-\sfe_{k\tau}\circ\pi^1\|_{\Llambda}\\
&=\left(\int|\gamma(t)-\gamma(k\tau)|^2\d\eeta_\tau(\gamma)\right)^{\frac{1}{2}}+\left(\int|\tilde\gamma(t)-\tilde\gamma(k\tau)|^2\d\tilde\eeta_\tau(\tilde\gamma)\right)^{\frac{1}{2}}\\
&\le 2L\tau.
\end{align*}
Hence, by applying the general identity
\[a^2-b^2=2b(a-b)+|a-b|^2, \quad a,b \in \R,\]
in the relation obtained in \eqref{eq:almost-GW}, with $a:=\sigma(k\tau)$, $b:=\sigma(t)$, and using the assumption $\lambda_+\tau\le2$, we get
\[\frac{\d }{\d t}\sigma^2(t)\le 2\lambda\sigma^2(t)+8|\lambda|L\tau\sigma(t)+24L^2\tau,\]
for every $t \in [0,T]$ with at most countable exceptions.  Applying the Gronwall estimate in \cite[Lemma B.1 and equation (6.8)]{CSS}, we get
\[\sigma(t)\le\sigma(0)e^{\lambda t}+8L\sqrt{t\tau}\left(1+|\lambda|\sqrt{t\tau}\right)e^{\lambda_+t},\quad\text{for any }t\in[0,T].\]
In particular,
\begin{equation}
\sup_{t\in[0,T]}\sigma(t)\le \sigma(0)e^{\lambda_+ T}+8L\sqrt{T\tau}\left(1+|\lambda|\sqrt{T\tau}\right)e^{\lambda_+ T}=:K_\tau.
\end{equation}
We recall that, by construction of $\Llambda$, $\sigma(0)=W_2(\bar\mu,\tilde\mu)$, indeed we chose $\rrho^0\in\Gamma_o(\bar\mu,\tilde\mu)$.

Using the interpolation inequality (cf.~\cite[p.233 (iii)]{Breziss})
    \begin{displaymath}
        \|Y\|^2_\infty
        \le C\|Y\|_{L^2(0,T;\X)}\|Y\|_{H^1(0,T;\X)}
        \quad\text{for every }Y\in H^1(0,T;\X),\text{ for some }C>0,
    \end{displaymath}
we have that there exists $C>0$ such that
\begin{equation}\label{eq:I1I2}
\begin{split}
W_{2, \infty}^2(\eeta_\tau, \tilde\eeta_\tau)&\le \int \|\gamma-\tilde\gamma\|^2_\infty\d\Llambda(\gamma,\tilde\gamma)\\
&\le C \int \|\gamma-\tilde\gamma\|_{L^2(0,T;\X)}\,\|\gamma-\tilde\gamma\|_{H^1(0,T;\X)}\d\Llambda(\gamma,\tilde\gamma)\\
&\le C \left(\int \|\gamma-\tilde\gamma\|^2_{L^2(0,T;\X)}\d\Llambda(\gamma,\tilde\gamma)\right)^{\frac{1}{2}}\cdot \left(\int \|\gamma-\tilde\gamma\|^2_{H^1(0,T;\X)}\d\Llambda(\gamma,\tilde\gamma)\right)^{\frac{1}{2}},
\end{split}
\end{equation}
indeed $\Llambda\in\Gamma(\eeta_\tau,\tilde\eeta_\tau)$.

It remains to estimate the two terms at the right-hand side. Concerning the first term, notice that
\begin{align*}
I_1^2&:=\int \|\gamma-\tilde\gamma\|^2_{L^2(0,T;\X)}\d\Llambda(\gamma,\tilde\gamma)\\
&=\int \left(\int_0^T|\gamma(t)-\tilde\gamma(t)|^2\d t\right)\d\Llambda(\gamma,\tilde\gamma)\\
&=\int_0^T \left(\int|\gamma(t)-\tilde\gamma(t)|^2\d\Llambda(\gamma,\tilde\gamma)\right)\d t\\
&\le T\sup_{t\in[0,T]}\sigma^2(t)\\
&\le T K_\tau^2
\end{align*}
Regarding the second term in \eqref{eq:I1I2}, we get
\begin{align*}
I_2^2&:=\int \|\gamma-\tilde\gamma\|^2_{H^1(0,T;\X)}\d\Llambda(\gamma,\tilde\gamma)\\
&\le 2\left(I_1^2+\int\left(\int_0^T |\gamma'(t)-\tilde\gamma'(t)|^2\d t\right)\d\Llambda(\gamma,\tilde\gamma)\right)\\
&=2\left(I_1^2+\int_0^T\left(\int |\gamma'(t)-\tilde\gamma'(t)|^2\d\Llambda(\gamma,\tilde\gamma)\right)\d t\right)\\
&\le 2(I_1^2+4TL^2).
\end{align*}
In conclusion,
\begin{align*}
W_{2, \infty}^2(\eeta_\tau, \tilde\eeta_\tau)&\le \sqrt{2}\,C\, T^{\frac{1}{2}}\,K_\tau\, \sqrt{TK_\tau^2+4TL^2}\\
&\le \tilde C\,\left(\sigma(0)+\sigma^2(0)+\tau^{\frac{1}{2}}+\tau+\tau^2\right),
\end{align*}
for some $\tilde C>0$ depending on $L,T,\lambda$. In particular, if $\tau<1$ and $\sigma(0)<1$, we get
\[W_{2, \infty}(\eeta_\tau, \tilde\eeta_\tau)\le \hat C \left(W_2^{\frac{1}{2}}(\bar\mu,\tilde\mu)+\tau^{\frac{1}{4}}\right),\]
for some $\hat C>0$ depending on $L,T,\lambda$.
\end{proof}
In order to state our main result, we 
introduce a technical property.

\begin{definition}\label{def:locsolv} Let $\frF$ be a \MPVF, $T>0$ and $\bar\mu \in \dom(\frF)$. We say that the Explicit Euler scheme is \emph{approximately solvable at $\bar\mu$ up to time $T$} if there exist $\bar{\tau}, L>0$ and a sequence $(\bar\mu^{j})_{j\in\N} \subset \dom(\frF)$ such that
\begin{enumerate}[label=(\roman*)]
    \item $W_2(\bar\mu^{j}, \bar\mu) \to 0$ as $j \to +\infty$;
    \item $\supp(\bar\mu^{j})$ is finite  for every $j\in\N$;
    \item the sets $\mathscr E( \bar\mu^{j}, \tau, T,L)$ are not empty for every $0<\tau<\bar{\tau}$, for every $j\in\N$.
\end{enumerate}
\end{definition}
We conclude with the main result of the paper.
\begin{theorem}\label{thm:main} Let $\frF \subset \prob_2(\TX)$ be a \MPVF whose barycenter $\bri \frF$ is totally $\lambda$-dissipative. Let $\bar\mu \in \dom(\frF)$ and $(\bar\mu_h)_h \subset \dom(\frF)$ be such that $W_2(\bar\mu_h, \bar\mu) \to 0$ as $h \to + \infty$, and let $\N\ni h\mapsto \tau(h)\in\R_+$ be a vanishing sequence of
  time steps. Let $T, L>0$ and
\begin{enumerate}
\item  $(M_h, \frF_h,\eeta_h)\in\mathscr T(\bar\mu_h,\tau(h),T,L)$ be  as in Definition \ref{def:intEE} for the \MPVF $\frF$;
    \item $\eeta \in \prob_2(\rmC([0,T]; \X))$ be the probability measure given by Theorem \ref{thm:implift} for any maximal totally $\lambda$-dissipative extension $\hat\frF$ of $\bri\frF$, starting from $\bar\mu$.
\end{enumerate}
Assume in addition that the Explicit Euler scheme for $\frF$ is approximately solvable at $\bar\mu$ up to time $T$, according to Definition \ref{def:locsolv}.
Then $W_{2, \infty}(\eeta, \eeta_h) \to 0$ as $h \to + \infty$.
\end{theorem}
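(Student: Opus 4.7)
The plan is to perform a triangle-inequality argument that reduces the general case to the finitely-supported one (covered by Theorem~\ref{thm:red}), controlling the two interpolating errors via Theorem~\ref{thm:giulia} (stability of the Explicit Euler scheme) and Proposition~\ref{prop:yell} (stability of the Implicit Euler semigroup). Let $(\bar\mu^j)_{j\in\N}$, $\bar\tau>0$ and $L_0>0$ be the data provided by the approximate solvability assumption (Definition~\ref{def:locsolv}). For each $j$, I would denote by $\eeta^j$ the IE probabilistic representation from Theorem~\ref{thm:implift} starting at $\bar\mu^j$, and, for each $h$ with $\tau(h)<\bar\tau$, I would select some $\eeta_h^j \in \mathscr R(\bar\mu^j, \tau(h), T, L_0)$. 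The guiding estimate is then
\[
W_{2,\infty}(\eeta_h, \eeta) \le W_{2,\infty}(\eeta_h, \eeta_h^j) + W_{2,\infty}(\eeta_h^j, \eeta^j) + W_{2,\infty}(\eeta^j, \eeta).
\]

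The middle term vanishes as $h\to\infty$ with $j$ fixed by Theorem~\ref{thm:red}, since $\bar\mu^j$ has finite support. For the last term, I would first argue that $\bar\mu^j\in\dom(\hat\frF)$ with $|\hat\frF|_2(\bar\mu^j)\le L_0^2$ uniformly in $j$: indeed, the non-emptiness of $\mathscr E(\bar\mu^j,\tau,T,L_0)$ produces some $\Phi^j\in\frF[\bar\mu^j]$ with $|\Phi^j|_2\le L_0$, whose barycenter $\bri{\Phi^j}$ lies in $\bri\frF[\bar\mu^j]\subset\hat\frF[\bar\mu^j]$ and satisfies $|\bri{\Phi^j}|_2\le L_0$ by Jensen; applying Theorem~\ref{thm:minsel} to $\hat\frF$ then gives $|\hat\frF|_2(\bar\mu^j)\le L_0^2$. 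Proposition~\ref{prop:yell} applied to $\hat\frF$ with initial data $\bar\mu^j$ and $\bar\mu$ thus delivers $W_{2,\infty}(\eeta^j, \eeta)\le C\sqrt{W_2(\bar\mu^j,\bar\mu)}$, which vanishes as $j\to\infty$. For the first term, setting $L':=\max\{L,L_0\}$ (so that both $(M_h,\frF_h,\eeta_h)$ and the triple associated to $\eeta_h^j$ belong to $\mathscr T(\cdot,\tau(h),T,L')$), Theorem~\ref{thm:giulia} yields, for $h$ sufficiently large so that $\lambda_+\tau(h)\le 2$, $\tau(h)<1$ and $W_2(\bar\mu_h,\bar\mu^j)<1$,
\[
W_{2,\infty}(\eeta_h, \eeta_h^j) \le C'\bigl(W_2^{1/2}(\bar\mu_h,\bar\mu^j) + \tau(h)^{1/4}\bigr),
\]
with $C'=C'(L',T,\lambda)$ independent of $h$ and $j$.

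With the three bounds in hand, a standard $\varepsilon/3$-argument concludes the proof: given $\varepsilon>0$, I would first fix $j$ so large that $W_{2,\infty}(\eeta^j,\eeta)<\varepsilon/3$ and $C'\sqrt{W_2(\bar\mu,\bar\mu^j)}<\varepsilon/6$; then with $j$ frozen, choose $h$ so large that the middle term is below $\varepsilon/3$ (Theorem~\ref{thm:red}), $C'\tau(h)^{1/4}<\varepsilon/6$, and $W_2(\bar\mu_h,\bar\mu)$ is small enough that the first term is below $\varepsilon/3$. The main obstacle — and really the only delicate point — is keeping the stability constants uniform across the approximation: this hinges on the fact that the single $L_0$ supplied by Definition~\ref{def:locsolv} is valid for all $j$, so that both the IE bound $|\hat\frF|_2(\bar\mu^j)\le L_0^2$ and the Explicit Euler constant $L'$ in Theorem~\ref{thm:giulia} are independent of $j$, ensuring that $C$ in Proposition~\ref{prop:yell} (depending on $T,\lambda,L_0^2,\eeta$) and $C'$ in Theorem~\ref{thm:giulia} do not blow up along the approximation.
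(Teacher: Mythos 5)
Your proposal is correct and follows essentially the same strategy as the paper's proof: the same three-term triangle inequality through the finitely supported approximants $\bar\mu^j$, with the middle term controlled by Theorem~\ref{thm:red}, the first by Theorem~\ref{thm:giulia}, and the last by Proposition~\ref{prop:yell} after establishing the uniform bound $\sup_j|\hat\frF|_2(\bar\mu^j)<\infty$ via the barycentric projection and the minimal selection of $\hat\frF$. The only (cosmetic) difference is that you carry $L'=\max\{L,L_0\}$ explicitly whereas the paper absorbs the two stability constants into one ``without loss of generality''; the substance is identical.
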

\begin{proof} Let $(\bar\mu^{j})_{j}  \subset \dom(\frF)$ be as in Definition \ref{def:locsolv} for $\bar{\mu}$. By assumption, there exists $\hat L>0$ such that $\mathscr E( \bar\mu^{j}, \tau(h), T,\hat L)\neq\emptyset$ for $h$ sufficiently large. Without loss of generality, we can assume that $\hat L=L$. By this condition, for any $j\in\N$ there exists $\bar \Phi^j\in\frF[\bar\mu^{j}]$ such that $|\bar \Phi^j|_2\le L$.

Also, 
we can assume w.l.o.g.~that $W_2(\bar\mu^{j}, \bar\mu_h) <1$ and $\tau(h)<\min\{\frac{2}{\lambda_+},1\}$ for every $j, h \in \N$.

Let  $(M_h^j, \frF_h^j,\eeta_h^j)\in\mathscr T(\bar\mu^{j},\tau(h),T,L)$.
By Theorem \ref{thm:giulia}, there exists $C>0$ such that\[W_{2, \infty}(\eeta_h,\eeta_h^j)\le C \left(\sqrt{W_2(\bar\mu_h, \bar\mu^{j})}+\tau(h)^{\frac{1}{4}}\right) \quad \text{ for every } j, h \in \N.\]
Let $\eeta^j\in\prob(\rmC([0,T]; \X))$ be as in Theorem \ref{thm:implift} for $\hat\frF$ and the starting point $\bar\mu^{j}$.  By assumption, we have
\[\sup_j |\hat\frF|_2(\bar\mu^j)<\infty,\]
indeed, for any $j\in\N$, denoting by $\hat\ff^\circ$ the minimal selection of $\hat\frF$ as in Theorem \ref{thm:minsel}, we have
\begin{equation}\label{eq:minselN}
\int \left|\hat\ff^\circ[\bar\mu^{j}](x)\right|^2\,\d\bar\mu^{j}(x)\le \int |v|^2\,\d\hat\Phi(x,v),
\end{equation}
for any $\hat\Phi\in\hat\frF[\bar\mu^{j}]$. In particular, \eqref{eq:minselN} holds for any $\hat\Phi\in\bri\frF[\bar\mu^{j}]$, and thus for $\bri{\bar\Phi^j}$, so that
\begin{equation*}
\int \left|\hat\ff^\circ[\bar\mu^{j}](x)\right|^2\,\d\bar\mu^{j}(x)\le \int |\bry{\bar\Phi^j}(x)|^2\,\d\bar\mu^{j}(x)\le \int |v|^2\,\d\bar\Phi^j(x,v)\le L.
\end{equation*}
We can thus apply Proposition \ref{prop:yell} for $\hat\frF$ and get the existence of $\tilde C>0$ such that
\[W_{2, \infty}(\eeta^j, \eeta)\le \tilde C\, \sqrt{W_2(\bar\mu, \bar\mu^{j})} \quad \text{ for every } j \in \N. \]
Thus, by triangle inequality we can write, for every $n, N \in \N$, that
\begin{align*}
    W_{2, \infty}(\eeta_h, \eeta) &\le W_{2, \infty}(\eeta_h, \eeta_h^j) + W_{2, \infty}(\eeta_h^j, \eeta^j) + W_{2, \infty}(\eeta^j, \eeta) \\
    & \le C \left(\sqrt{W_2(\bar\mu_h, \bar\mu^{j})}+\tau(h)^{\frac{1}{4}}\right) +  W_{2, \infty}(\eeta_h^j, \eeta^j) + \tilde C\, \sqrt{W_2(\bar\mu, \bar\mu^{j})}.
\end{align*}
The thesis follows by passing first to the limit as $h \to + \infty$ and applying Theorem \ref{thm:red} to $\eeta_h^j$ and $\eeta^j$, and then passing to the limit as $j \to + \infty$.
\end{proof}

\begin{theorem}\label{prop:young}
    In the same hypotheses of Theorem \ref{thm:main}, let $(\Omega, \cB)$ be a standard Borel space endowed with a non atomic probability measure $\P$. Let $\bar X,\bar X_h \in L^2(\Omega, \cB, \P; \X)$ be random variables such that $(\bar X)_\sharp \P=\bar\mu$, $(\bar X_h)_\sharp \P= \bar\mu_h$ for every $h \in \N$, and let $\mathrm Z_h \in L^2(\Omega, \cB, \P; \rmC([0,T]; \X))$ be such that $(\mathrm Z_h)_\sharp \P= \eeta_h$ for every $h \in \N$. If $\bar X_h \to \bar X$ $\P$-a.s.~and $\sfe_0 \circ \mathrm Z_h = \bar X_h$ for every $h \in \N$, then 
    \[ \mathrm Z_h \to {\mathrm s}_{\bar\mu} \circ \bar X \text{ in } L^2(\Omega, \cB, \P; \rmC([0,T]; \X)) \text{ as } h \to + \infty, \]
    where ${\mathrm s}_{\bar\mu}$ is as in \eqref{eq:s}.
\end{theorem}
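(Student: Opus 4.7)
My plan is to identify the target process as $\mathrm Z:=\mathrm s_{\bar\mu}\circ\bar X$. By Theorem \ref{thm:implift}, this random element of $\rmC([0,T];\X)$ has law $\eeta$, and by \eqref{eq:init} it satisfies $\sfe_0\circ\mathrm Z=\bar X$. Setting $\Llambda_h:=(\mathrm Z_h,\mathrm Z)_\sharp\P\in\Gamma(\eeta_h,\eeta)$ and letting $\Llambda$ denote the diagonal coupling of $\eeta$ (the measure concentrated on pairs $(\gamma,\gamma)$ with $\gamma\sim\eeta$), the strategy reduces to proving $\Llambda_h\to\Llambda$ in $W_2$ on the product space $\rmC([0,T];\X)^2$ endowed with the norm $\|(\gamma,\tilde\gamma)\|^2:=\|\gamma\|_\infty^2+\|\tilde\gamma\|_\infty^2$. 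Once this is established, applying $W_2$-convergence to the continuous functional $F(\gamma,\tilde\gamma):=\|\gamma-\tilde\gamma\|_\infty^2$, which has quadratic growth and vanishes $\Llambda$-almost everywhere, immediately yields $\E[\|\mathrm Z_h-\mathrm Z\|_\infty^2]=\int F\,\de\Llambda_h\to 0$.

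I would first upgrade the a.s.\ convergence $\bar X_h\to\bar X$ to strong $L^2(\Omega;\X)$ convergence. Theorem \ref{thm:main} in particular implies $W_2(\bar\mu_h,\bar\mu)\to 0$, so $\E[|\bar X_h|^2]\to\E[|\bar X|^2]$; combined with the a.s.\ convergence, a standard Vitali/Scheff\'e argument yields $L^2$-convergence of the initial data.

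The central step is the narrow convergence $\Llambda_h\to\Llambda$. Tightness of $(\Llambda_h)_h$ in $\prob(\rmC([0,T];\X)^2)$ is automatic from tightness of its marginals $\eeta_h$ and $\eeta$. Let $\tilde\Llambda$ be any narrow sub-limit; both its marginals coincide with $\eeta$. The continuous map $(\sfe_0,\sfe_0)$ pushes $\Llambda_h$ forward to $(\bar X_h,\bar X)_\sharp\P$, which converges in $L^2$, and hence narrowly, to the diagonal of $\bar\mu$. Therefore $(\sfe_0,\sfe_0)_\sharp\tilde\Llambda$ is this same diagonal, so $\gamma(0)=\tilde\gamma(0)$ holds $\tilde\Llambda$-almost surely. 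The key structural input then comes from Theorem \ref{thm:implift}: since $\eeta=(\mathrm s_{\bar\mu})_\sharp\bar\mu$ and $\sfe_0\circ\mathrm s_{\bar\mu}=\ii$, the measure $\eeta$ is concentrated on the graph of $\mathrm s_{\bar\mu}$, so $\gamma=\mathrm s_{\bar\mu}(\gamma(0))$ and $\tilde\gamma=\mathrm s_{\bar\mu}(\tilde\gamma(0))$ hold $\tilde\Llambda$-a.s., forcing $\gamma=\tilde\gamma$ $\tilde\Llambda$-a.s.\ and thus $\tilde\Llambda=\Llambda$. Uniqueness of the sub-limit gives narrow convergence of the full sequence.

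To upgrade from narrow to $W_2$ convergence, I would verify convergence of the 2-moments on the product space:
\[
\int\bigl(\|\gamma\|_\infty^2+\|\tilde\gamma\|_\infty^2\bigr)\de\Llambda_h=\int\|\gamma\|_\infty^2\de\eeta_h+\int\|\gamma\|_\infty^2\de\eeta\;\longrightarrow\; 2\int\|\gamma\|_\infty^2\de\eeta,
\]
using the $W_{2,\infty}$-convergence of $\eeta_h$ to $\eeta$ from Theorem \ref{thm:main}. The right-hand side is the 2-moment of $\Llambda$, so narrow plus 2-moment convergence gives $W_2$-convergence on the product space, and integrating $F$ as outlined closes the proof. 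The main obstacle, in my view, is precisely this identification of the narrow limit: each $\eeta_h$ genuinely carries randomness beyond its initial datum (velocities are random in the Explicit scheme) so $\eeta_h$ is not supported on any graph, yet the limit $\eeta$ collapses to the deterministic flow of $\mathrm s_{\bar\mu}$, and the constraint $\sfe_0\circ\mathrm Z_h=\bar X_h$ together with $\bar X_h\to\bar X$ a.s.\ is exactly what forces this collapse to happen jointly in the limit.
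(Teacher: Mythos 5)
Your proof is correct, and it reaches the conclusion by a genuinely different technical route, though the conceptual core (identifying a narrow limit using the time-$0$ constraint together with the fact that $\eeta$ is concentrated on the graph of $\mathrm s_{\bar\mu}$) is the same one the paper exploits. The paper works on the triple product $\Omega \times \X \times \rmC([0,T];\X)$ with the laws $\ttheta_h := (\ii_\Omega, \bar X_h, \mathrm Z_h)_\sharp \P$, shows their narrow convergence to $\ttheta := (\ii_\Omega, \bar X, \mathrm s_{\bar\mu}\circ\bar X)_\sharp\P$ by projecting onto two pairs of coordinates and invoking \cite[Lemma 5.3.2]{ags} (a plan whose $(1,2)$-marginal is concentrated on a map is determined by its other marginal), and then passes from narrow convergence to convergence in probability via \cite[Lemma 5.4.1]{ags} and to $L^2$ via uniform integrability of $2$-moments. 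You instead work directly with the coupling $\Llambda_h = (\mathrm Z_h, \mathrm Z)_\sharp\P \in \Gamma(\eeta_h, \eeta)$ on $\rmC([0,T];\X)^2$, identify the narrow sublimit as the diagonal by an elementary argument (both marginals equal $\eeta$, the time-$0$ pushforward is the diagonal of $\bar\mu$, and $\eeta$-a.e.\ curve is recovered from its initial value by $\mathrm s_{\bar\mu}$), upgrade to $W_2$ convergence by checking moment convergence (supplied by the $W_{2,\infty}$-convergence of $\eeta_h$ from Theorem \ref{thm:main}), and finally test against the continuous quadratically-growing cost $\|\gamma-\tilde\gamma\|_\infty^2$ which vanishes on the diagonal. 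Your route is slightly more self-contained, replacing the two AGS probability-theoretic lemmas with the standard characterization of $W_2$-convergence; the paper's route keeps the argument in the language of stability of plans concentrated on graphs, which is closer to the surrounding machinery of the paper. Both uses of the moment/uniform-integrability information from Theorem \ref{thm:main} enter at the same point, just packaged differently.
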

 \begin{proof}  We set $\ttheta_h := (\ii_\Omega, \bar X_h, \mathrm Z_h)_\sharp \P$ and $\ttheta := (\ii_\Omega, \bar X, {\mathrm s}_{\bar\mu} \circ \bar X)_\sharp \P$; we claim that
 \begin{equation}\label{eq:thetaconv}
     \ttheta_h \weakto \ttheta \text{ as } h \to + \infty.
 \end{equation}
First of all, notice that $(\ttheta_h)_h$ is tight, since its marginals are tight: the first one is the constant $\P$, the second one is $\bar\mu_h$ which is converging to $\bar\mu$, and the third one is $\eeta_h$ which is converging to $\eeta$ by Theorem \ref{thm:main}. Up to a (unrelabeled) subsequence, we thus get that $\ttheta_h \weakto \bar \ttheta$ for some $\bar\ttheta \in \prob(\Omega \times \X \times \rmC([0,T];\X))$. Observe that $(\pi^{0,1})_\sharp \ttheta_h = (\ii_\Omega, \bar X_h)_\sharp \P \weakto (\ii_\Omega, \bar X)_\sharp \P=(\pi^{0,1})_\sharp \bar\ttheta$ by the Dominated Convergence Theorem; we also have that
\[(\pi^{1,2})_\sharp \ttheta_h = (\bar X_h, \mathrm Z_h)_\sharp \P = (\sfe_0, \ii_{\rmC})_\sharp \eeta_h \weakto (\sfe_0, \ii_{\rmC})_\sharp \eeta = (\bar X, {\mathrm s}_{\bar\mu} \circ \bar X)_\sharp \P = (\pi^{1,2})_\sharp \bar \ttheta,\]
where we have denoted by $\ii_{\rmC}$ the identity map in $\rmC([0,T];\X)$ and we have used that $\sfe_0 \circ {\mathrm s}_{\bar\mu} \circ \bar X=\bar X$ (cf.~\eqref{eq:init}), together with the fact that $\eeta= ({\mathrm s}_{\bar\mu})_\sharp \bar\mu$ coming from Theorem \ref{thm:implift}. Since $(\pi^{1,2})_\sharp \bar \ttheta $ is concentrated on a map, we deduce by \cite[Lemma 5.3.2]{ags} that it must be $\bar\ttheta = \ttheta$; this proves \eqref{eq:thetaconv}. An application of \cite[Lemma 5.4.1]{ags} gives that $(\bar X_h, \mathrm Z_h) \to (\bar X, {\mathrm s}_{\bar\mu} \circ \bar X)$ in probability, so that, in particular, $\mathrm Z_h \to {\mathrm s}_{\bar\mu} \circ \bar X$ in probability. However, since the measures $(\mathrm Z_h)_\sharp \P= \eeta_h$ are converging to $\eeta$ in $W_{2, \infty}$ by Theorem \ref{thm:main}, the random variables $(\mathrm Z_h)_h$ have uniformly integrable $2$-moments, and therefore they converge to ${\mathrm s}_{\bar\mu}$ in $L^2(\Omega, \cB, \P; \rmC([0,T]; \X))$.
     
 \end{proof}

\section{Examples}\label{sec:examples}
We present examples illustrating the application of the theory developed in this paper. In particular, we examine how the results established in Theorem \ref{thm:main} and Theorem \ref{prop:young} can be used in the setting of Stochastic Gradient-type evolutions—more broadly, within the framework of Stochastic Dissipative Flow theory—as well as in the context of dissipative evolutions governed by interaction fields.

\subsection{Stochastic Dissipative Flow}\label{ex:sgd1}
Given a vector field $b:\X\to\X$ and an initial datum $\bar{x}\in\X$, we consider the following deterministic Cauchy problem
\begin{equation}\label{eq:odeex1}
\dot{x}(t) = b(x(t)), \quad t\in [0,T], 
\quad x(0) = \bar{x}.
\end{equation}
We assume $b$ to be continuous and $\lambda$-dissipative for some $\lambda\in\R$, i.e.
\begin{equation}\label{eq:dissg}
\langle x_1-x_0,b(x_1)-b(x_0)\rangle\le\lambda |x_1-x_0|^2,\quad\text{for any }x_0,\,x_1\in\X,
\end{equation}
so that \eqref{eq:odeex1} is well-posed.

We assume that $b$ arises as a stochastic average of a family of Borel vector fields $g:\X\times\U\to\X$, where $(\U,\UU)$ is a probability space endowed with a non-atomic probability measure, i.e.
\[b(x)=\int_\U g(x,u)\,\d\UU(u), \quad x \in \X.\]
We require that there exists $L>0$ such that
\begin{equation}\label{eq:Lgstoch}
\int_\U |g(x,u)|^2\,\d\UU(u) \le L(1+|x|^2) \quad \text{ for every } x \in \X.
\end{equation}

Similarly to what is done for the \emph{stochastic gradient descent}, we consider the following stochastic scheme. 

We adopt the same notation of Definition \ref{def:intEE} for the interpolation map $G:\X^{\finalstep T\tau+1}\to\rmC([0,T]; \X)$ sending points $(x_0, x_1, \dots, x_\finalstep T\tau)$ to their affine interpolation, i.e. to the curve $\gamma$ given by
    \begin{equation}\label{eq:glueG}
     \gamma(t):= \frac{1}{\tau}\left((n+1)\tau-t\right) x_n+\frac{1}{\tau}(t-n\tau) x_{n+1},
     \end{equation}
if $t \in [n\tau, (n+1)\tau]\cap[0,T]$, $0\le n<\finalstep T\tau$.

\begin{definition}[SDF]\label{def:SDF} Let $(\Omega, \cB, \P)$ be a standard Borel probability space and $T>0$. Define $J:=\left\{\frac{T}{N}\,:\,N\in\N\setminus\{0\}\right\}$. We say that a family of maps $X_\tau:\Omega\to \rmC([0,T];\X)$, $\tau \in J$, is a \emph{Stochastic Dissipative Flow (\SDF) for $g$} if there exist random variables $(X^n_\tau)_{0 \le n \le N, \tau \in J} \subset L^2(\Omega; \X)$ and $V^k : \Omega \to \U$, $k \in \N$ such that
\begin{itemize}
    \item $(X^{0}_\tau)_{\tau \in J}$ and $(V^k)_k$ are independent,
    \item $(V^k)_\sharp \P= \UU$ for every $k \in \N$,
    \item $X^{n+1}_\tau=X^n_\tau+\tau\, g(X^n_\tau,V^n)$ for every $0\le n\le N-1$, $\tau \in J$,
\end{itemize}
and 
\begin{equation}\label{eq:SDFjoint}
X_\tau=G_\sharp\left(X^0_\tau,X^1_\tau,\dots,X^n_\tau\right)\quad \text{ for every } \tau \in J.
\end{equation}
\end{definition}

We point out that in Definition \ref{def:SDF} we assumed $J \subset \R$ to be countable in order to ensure the existence of a \SDF which can be proven via the Kolmogorov existence theorem (cf.~\cite[Theorem 36.1]{billingsley}).

We have the following result.
\begin{corollary}\label{cor:6.1} In the setting of Definition \ref{def:SDF}, assume that $(X^0_\tau)_{\tau\in J}$ converges $\P$-a.s.~to some $\bar X \in L^2(\Omega, \cB, \P; \X)$ and that $W_2(({X^0_\tau})_\sharp\P,({\bar X})_\sharp\P)\to 0$ as $\tau\downarrow 0$.
Then $X_\tau$ converges in $L^2(\Omega, \cB, \P; \rmC([0,T]; \X))$ as $\tau \downarrow 0$ to the unique solution of the deterministic ODE
\[\dot{X}_t=b(X_t),\quad t \in [0,T],\quad X_{t=0}=\bar X.\]
\end{corollary}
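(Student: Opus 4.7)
The plan is to view this corollary as an application of Theorems \ref{thm:main} and \ref{prop:young} to the \MPVF canonically associated with $g$. I start by defining
\begin{equation*}
    \frF := \bigl\{(\pi^\X, g)_\sharp(\mu\otimes \UU) : \mu\in \prob_2(\X)\bigr\},
\end{equation*}
which, thanks to \eqref{eq:Lgstoch}, is a well-defined single-valued subset of $\prob_2(\TX)$ with $\dom(\frF)=\prob_2(\X)$. Its barycenter at $\mu$ is $(\ii_\X, b)_\sharp \mu$ since $\int g(x,u)\,\d\UU(u)=b(x)$, and any $\ttheta\in\Gamma(\bri\frF[\mu_0],\bri\frF[\mu_1])$ is concentrated on $\{v_i=b(x_i)\}$; hence the one-sided Lipschitz condition \eqref{eq:dissg} yields total $\lambda$-dissipativity of $\bri\frF$ in the sense of Definition \ref{def:total-dissipativity}.

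Next I identify the \SDF $X_\tau$ with the probabilistic representation of an Explicit Euler scheme for $\frF$ starting from $\bar\mu_\tau:=(X^0_\tau)_\sharp\P$. Setting $M^n_\tau:=(X^n_\tau)_\sharp\P$ and making the canonical choice $\Phi^n_\tau:=(\pi^\X,g)_\sharp(M^n_\tau\otimes\UU)\in\frF[M^n_\tau]$, the independence of $V^n$ from $(X^0_\tau,V^0,\dots,V^{n-1})$, and hence from $(X^0_\tau,\dots,X^n_\tau)$, implies the Markov property: given $X^n_\tau=x$, the conditional law of $X^{n+1}_\tau$ is the law of $x+\tau g(x,V^n)$, which is exactly the disintegration $T^n_{\tau,x}$ of the single-step plan $T_\tau(\Phi^n_\tau)$ in Definition \ref{def:step-plans}. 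Induction on $n$ yields $(X^0_\tau,\dots,X^N_\tau)_\sharp\P=\aalpha_\tau\bigl((\Phi^n_\tau)_n\bigr)$, and then \eqref{eq:SDFjoint} gives $(X_\tau)_\sharp\P=\eeta_\tau\in\mathscr R(\bar\mu_\tau,\tau,T,L_\tau)$ for some $L_\tau>0$.

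To apply Theorem \ref{thm:main} I still need approximate solvability of $\frF$ at $\bar\mu$ up to time $T$ in the sense of Definition \ref{def:locsolv}. I approximate $\bar\mu$ in $W_2$ by finitely supported $\bar\mu^j$ and use the estimate $|\Phi^n_\tau|_2^2\le L(1+\rsqm{M^n_\tau}^2)$ coming from \eqref{eq:Lgstoch}, together with the recursion $\rsqm{M^{n+1}_\tau}^2\le(1+C\tau)\rsqm{M^n_\tau}^2+C\tau$ obtained from $M^{n+1}_\tau=\exp^\tau_\sharp \Phi^n_\tau$; a discrete Gronwall argument on $n\le T/\tau$ then produces a stability bound uniform in $j$ and in small $\tau$. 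Theorem \ref{thm:main} now gives $W_{2,\infty}(\eeta_\tau,\eeta)\to 0$ as $\tau\downarrow 0$ in $J$, with $\eeta$ coming from any maximal totally $\lambda$-dissipative extension of $\bri\frF$; since $\sfe_0\circ X_\tau=X^0_\tau$ by construction of the affine interpolation and $X^0_\tau\to\bar X$ $\P$-a.s.~by assumption, Theorem \ref{prop:young} delivers $X_\tau\to \mathrm s_{\bar\mu}\circ\bar X$ in $L^2(\Omega,\cB,\P;\rmC([0,T];\X))$. Finally, because $b$ is single-valued, continuous and $\lambda$-dissipative, its Lagrangian representation on $L^2(\Omega;\X)$ acts pointwise as $X\mapsto b\circ X$ and its semigroup is the pointwise flow of $b$; consequently $(\mathrm s_{\bar\mu}\circ\bar X)(\omega,\cdot)$ is, for $\P$-a.e.~$\omega$, the unique solution of $\dot X_t=b(X_t)$ with $X_0=\bar X(\omega)$, which is the desired limit. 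The main technical point is the discrete moment estimate required for approximate solvability; the identification of the \SDF with the Euler representation is essentially bookkeeping of the Markov/independence structure.
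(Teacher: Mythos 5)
Your proof is correct and follows essentially the same route as the paper: you define the \PVF $\frF$ as in \eqref{eq:F-SDF}, show that $\bri\frF=(\ii_\X,b)_\sharp\mu$ is totally $\lambda$-dissipative and (via continuity of $b$) maximal with minimal selection $b$ itself (the paper's \eqref{eq:bgfcirc}), identify the \SDF law $(X_\tau)_\sharp\P$ with the interpolated Euler measure $\eeta_\tau$ via the Markov/independence bookkeeping (which is exactly the content of Proposition \ref{prop:SDFeta}), verify approximate solvability from the growth condition \eqref{eq:Lgstoch}, and then apply Theorems \ref{thm:main} and \ref{prop:young}. The only substantive difference in emphasis is that you spell out the discrete Gronwall/moment argument needed for the uniform stability bound $L$, a step the paper compresses into a citation of \cite[Lemma 5.13]{CSS}; your version would benefit from one explicit sentence that the constant $L_\tau$ can be chosen independent of $\tau$ and $j$ because the second moments of $\bar\mu_\tau$ and $\bar\mu^j$ are bounded by $W_2$-convergence, since Theorems \ref{thm:main} and \ref{prop:young} require a single stability constant.
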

The result can be proved using the machinery developed in this paper as follows. We define the \PVF $\frF\subset\prob_2(\TX)$ by
\begin{equation}\label{eq:F-SDF}
\frF[\mu]:=(\pi^0,g)_\sharp(\mu\otimes\UU),\quad\mu\in\prob_2(\X).
\end{equation}
Notice that, for any $\mu\in\dom(\frF)=\prob_2(\X)$, the Borel family of probability measures $\left\{\frF_x[\mu]\right\}_{x\in\X}\subset\prob_2(\X)$, obtained by disintegrating $\frF[\mu]$ w.r.t. $\sfx_\sharp\frF=\mu$, is given by
\[\frF_x[\mu]= g(x,\cdot)_\sharp\UU,\quad\text{for }\mu\text{-a.e. }x\in\X.\]
In particular, for any $\mu\in\prob_2(\X)$, the barycenter
of $\frF[\mu]$ defined in \eqref{eq:bry} is given by
\[\int_\X v\,\d\frF_x[\mu](v)=b(x), \quad\text{for }\mu\text{-a.e. }x\in\X.\]
Thus, according to Definition \ref{def:baryc}, the \PVF $\bri\frF$ is given by
\begin{equation}\label{eq:brFex1}
\bri\frF[\mu]=(\ii_\X, b)_\sharp\mu,\quad\mu\in\prob_2(\X).
\end{equation}
We now show that $\bri\frF$ is \emph{totally $\lambda$-dissipative}, so that, thanks to Proposition \ref{prop:dissbari}, $\frF$ is unconditionally $\lambda$-dissipative. 
\begin{proposition}
The \PVF $\bri\frF$ defined in \eqref{eq:brFex1} is totally $\lambda$-dissipative.
\end{proposition}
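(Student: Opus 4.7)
The plan is to unpack the definition of total $\lambda$-dissipativity and exploit the fact that, since $\bri\frF[\mu] = (\ii_\X, b)_\sharp \mu$, each element of $\bri\frF$ is concentrated on the graph of $b$. This concentration will reduce the bilinear inequality on $\TX \times \TX$ to the pointwise dissipativity assumption \eqref{eq:dissg} on $b$.

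Concretely, I would fix $\Phi_0, \Phi_1 \in \bri\frF$ of the form $\Phi_i = (\ii_\X, b)_\sharp \mu_i$ with $\mu_i \in \prob_2(\X)$, and take an arbitrary $\ttheta \in \Gamma(\Phi_0, \Phi_1)$. Since $\Phi_i$ is concentrated on the graph $\{(x, v) \in \TX : v = b(x)\}$, the coupling $\ttheta$ is concentrated on
\[\{(x_0, v_0, x_1, v_1) \in \TX \times \TX : v_0 = b(x_0),\ v_1 = b(x_1)\}.\]
Consequently, for $\ttheta$-a.e.\ $(x_0, v_0, x_1, v_1)$, we may substitute $v_i$ with $b(x_i)$ inside the integrands appearing in Definition \ref{def:total-dissipativity}.

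Using this substitution and the pointwise dissipativity \eqref{eq:dissg} of $b$, we obtain
\[\int \langle v_1 - v_0, x_1 - x_0 \rangle \,\d\ttheta = \int \langle b(x_1) - b(x_0), x_1 - x_0 \rangle \,\d\ttheta \le \lambda \int |x_1 - x_0|^2 \,\d\ttheta,\]
which is exactly the total $\lambda$-dissipativity inequality \eqref{eq:total-dissipativity}. Note that we should also briefly justify that the integrands are $\ttheta$-integrable, which follows from $\Phi_0, \Phi_1 \in \prob_2(\TX)$ (a consequence of the growth bound \eqref{eq:Lgstoch}), ensuring $|v_i|$ and $|x_i|$ are in $L^2(\ttheta)$.

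I do not anticipate any real obstacle here: the construction of $\bri\frF$ as a deterministic graph measure trivializes the coupling structure, so the total dissipativity reduces verbatim to the classical Hilbert-space dissipativity of $b$. The main thing to keep clean is the identification $v_i = b(x_i)$ holding $\ttheta$-a.e., which is immediate from the fact that the marginals $\Phi_i$ are supported on the graph of $b$.
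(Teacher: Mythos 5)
Your proof is correct and follows essentially the same route as the paper's: both exploit that each $\bri\frF[\mu_i]$ is concentrated on the graph of $b$, so any coupling $\ttheta \in \Gamma(\Phi_0,\Phi_1)$ is concentrated on $\{v_0=b(x_0),\ v_1=b(x_1)\}$, and the total dissipativity inequality collapses to the pointwise dissipativity \eqref{eq:dissg} of $b$. The paper phrases this by explicitly writing $\ttheta = (\pi^0, b\circ\pi^0, \pi^1, b\circ\pi^1)_\sharp\ggamma$ for $\ggamma := (\sfx^0,\sfx^1)_\sharp\ttheta \in \Gamma(\mu,\nu)$, while you substitute $v_i = b(x_i)$ directly under the integral; these are equivalent.
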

\begin{proof}
Take $\mu,\nu\in\prob_2(\X)$ and $\boldsymbol\vartheta\in\Gamma(\bri{\frF}[\mu],\bri{\frF}[\nu])$. Since $\bri\frF[\mu],\bri\frF[\nu]$ are concentrated on maps, there exists $\ggamma\in\Gamma(\mu,\nu)$ such that
\[\boldsymbol\vartheta=\left(\pi^0,b\circ\pi^0,\pi^1,b\circ\pi^1\right)_\sharp\ggamma.\]
By assumption \eqref{eq:dissg}, we conclude that
\begin{align*}
\int_{\TX^2} \langle v_1-v_0,x_1-x_0\rangle \,\d\boldsymbol\vartheta(x_0,v_0,x_1,v_1)&=
\int_{\X^2}\langle b(x_1)-b(x_0),x_1-x_0\rangle \,\d\ggamma(x_0,x_1)\\
&\le\lambda\int_{\X^2}|x_1-x_0|^2\d\ggamma(x_0,x_1).
\end{align*}
\end{proof}

Thanks to \cite[Theorem 3.24]{CSS2grande}, since $b$ is continuous then $\bri\frF$ is maximal and 
\begin{equation}\label{eq:bgfcirc}
    b=\tilde{\ff}^\circ[\mu]=\tilde{\bb}^\circ[\mu]\quad \mu\text{-a.e.}
\end{equation} (cf.~also Theorem \ref{thm:minsel}), with $\tilde{\ff}^\circ$ being the element of minimal norm of $\bri\frF$ and $\tilde{\bb}^\circ$ of its Lagrangian representation.
By assumption \eqref{eq:Lgstoch} and \cite[Lemma 5.13]{CSS}, the Explicit Euler scheme is solvable for $\frF$.

In the following Proposition \ref{prop:SDFeta}, we prove the equivalence between the measure $\eeta_\tau$ constructed as in Definition \ref{def:intEE} for $\frF$ and the measure on paths defined as the law of a Stochastic Dissipative Flow for $g$.

\begin{proposition}\label{prop:SDFeta} In the setting of Definition \ref{def:SDF}, let $\mu_{0, \tau}:=(X^0_\tau)_\sharp \P$ and let $\frF$ be as in \eqref{eq:F-SDF}. Let $\eeta_\tau\in\prob(\rmC([0,T];\X))$ be as in Definition \ref{def:intEE} for the \PVF $\frF$ and the initial measure $\mu_{0,\tau}$. Then $\eeta_\tau = (X_\tau)_\sharp \P$.
\end{proposition}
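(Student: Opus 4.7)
The plan is to identify the law on $\rmC([0,T];\X)$ obtained by pushing $(X^0_\tau,\ldots,X^N_\tau)_\sharp\P$ through the interpolation map $G$ with the measure $\eeta_\tau=G_\sharp\aalpha_\tau$ built from the Explicit Euler scheme for $\frF$. To this end, I would prove by induction on $n$ the two statements
\begin{enumerate}[(a)]
\item $M^n_\tau=(X^n_\tau)_\sharp\P$ for $0\le n\le N$;
\item $\aalpha_\tau^n=(X^0_\tau,\ldots,X^n_\tau)_\sharp\P$ for $1\le n\le N$.
\end{enumerate}
Once (b) is available with $n=N$, the conclusion follows from
\[\eeta_\tau=G_\sharp\aalpha_\tau^N=G_\sharp(X^0_\tau,\ldots,X^N_\tau)_\sharp\P=(X_\tau)_\sharp\P,\]
where the last equality is \eqref{eq:SDFjoint}.

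For (a), the base case holds by definition of $\mu_{0,\tau}$. For the inductive step, note that $X^n_\tau$ is a Borel function of $X^0_\tau$ and $V^0,\ldots,V^{n-1}$, and all of these are independent of $V^n$ by Definition \ref{def:SDF} together with the mutual independence of the sequence $(V^k)_k$ built into the Kolmogorov construction. Hence $(X^n_\tau,V^n)_\sharp\P=M^n_\tau\otimes\UU$, and \eqref{eq:F-SDF} gives
\[\Phi^n_\tau=(\pi^0,g)_\sharp(M^n_\tau\otimes\UU)=(X^n_\tau,g(X^n_\tau,V^n))_\sharp\P.\]
Pushing forward through $\exp^\tau$ then yields $M^{n+1}_\tau=(X^n_\tau+\tau g(X^n_\tau,V^n))_\sharp\P=(X^{n+1}_\tau)_\sharp\P$.

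For (b), the base case $n=1$ reads $\aalpha_\tau^1=T_\tau^0=(\sfx,\exp^\tau)_\sharp\Phi^0_\tau=(X^0_\tau,X^1_\tau)_\sharp\P$, directly from the explicit form of $\Phi^0_\tau$ obtained in (a). For the inductive step, the same description of $\Phi^n_\tau$ gives the disintegration $T^n_{\tau,x}=(x+\tau g(x,\cdot))_\sharp\UU$ for $M^n_\tau$-a.e.~$x$. Substituting this and the inductive hypothesis on $\aalpha_\tau^n$ into the recursive definition of the multi-step plan in Definition \ref{def:step-plans}, for any bounded Borel $\varphi:\X^{n+2}\to\R$ we obtain
\[\int\varphi\,\de\aalpha_\tau^{n+1}=\int\!\int\varphi(x_0,\ldots,x_n,x_n+\tau g(x_n,u))\,\de\UU(u)\,\de(X^0_\tau,\ldots,X^n_\tau)_\sharp\P.\]
By independence of $V^n$ from $(X^0_\tau,\ldots,X^n_\tau)$ together with $V^n_\sharp\P=\UU$ and Fubini, the right-hand side equals
\[\E\bigl[\varphi(X^0_\tau,\ldots,X^n_\tau,X^n_\tau+\tau g(X^n_\tau,V^n))\bigr]=\int\varphi\,\de(X^0_\tau,\ldots,X^{n+1}_\tau)_\sharp\P,\]
closing the induction.

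The argument is essentially a careful bookkeeping exercise through the definitions of single- and multi-step plans; the only probabilistic input is the Markov structure of the scheme, which reduces to the independence of $V^n$ from the history $(X^0_\tau,\ldots,X^n_\tau)$. I do not anticipate a substantial obstacle beyond routine applications of Fubini and the disintegration theorem.
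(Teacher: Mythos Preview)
Your proposal is correct and follows essentially the same route as the paper: the paper organizes the argument into three claims---$M^n_\tau=(X^n_\tau)_\sharp\P$, the disintegration formula $T^n_{\tau,x}=(x+\tau g(x,\cdot))_\sharp\UU$, and $\aalpha_\tau^n=(X^0_\tau,\ldots,X^n_\tau)_\sharp\P$---proved by the same inductions using the independence of $V^n$ from the history, and then concludes via $\eeta_\tau=G_\sharp\aalpha_\tau$. The only cosmetic difference is that the paper isolates the disintegration formula as a separate claim, whereas you derive it inline.
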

\begin{proof}
Let $(M^n_\tau,\Phi_\tau^n)_{0\le n\le \finalstep T\tau}\subset \dom(\frF)\times \frF$ be as in Definition \ref{def:EEscheme} for the \PVF $\frF$. In particular, $\Phi_\tau^n=\frF[M_\tau^n]$. Let $\aalpha_\tau\in\prob(\X^{\finalstep T\tau+1})$ be the multi-step plan associated with $\left(\frF[M_\tau^n]\right)_n$ as in Definition \ref{def:step-plans}. Thus, $\eeta_\tau=G_\sharp\aalpha_\tau\in\prob(\rmC([0,T];\X))$.
By \eqref{eq:SDFjoint}, the result follows if we show that
\[\aalpha_\tau=(X^0_\tau,X^1_\tau,\dots,X^{\finalstep T\tau}_\tau)_\sharp\P.\]

\emph{Claim 1}: $M_\tau^n=(X^n_\tau)_\sharp\P$ for any $0\le n\le\finalstep T\tau$.\\
\emph{Proof of claim 1}: We prove this by induction on $n$. For $n=1$, noting that $(\mu_{0,\tau}\otimes\UU)=(X^0_\tau,V^0)_\sharp\P$ by independence of the random variables, we have
\begin{equation}\label{eq:Mn-joint}
M^1_\tau=(\exp^{\tau})_{\sharp} \frF[\mu_{0,\tau}]=(\exp^{\tau})_{\sharp}(\pi^0,g)_\sharp(\mu_{0,\tau}\otimes\UU)=(\pi^0+\tau g(\pi^0,\pi^1))_\sharp(X^0_\tau,V^0)_\sharp\P={X^1_\tau}_\sharp\P.
\end{equation}
Assume now that the claim is true for $n\ge1$; we need to prove it holds for $n+1$. This proof is analogous, noting that $M^n_\tau\otimes\UU=(X^n_\tau,V^n)_\sharp\P$ again by independence.
\smallskip

\emph{Claim 2}: Denote by $T^n_\tau:=(\sfx,\exp^\tau)_\sharp\frF[M^n_\tau]$ as in Definition \ref{def:step-plans}. Then, its disintegration  $\left\{T^n_{\tau,x}\right\}_{x\in\X}\subset\prob(\X)$ with respect to the projection map $\pi^0$ is given by
\[T^n_{\tau,x}=(x+\tau g(x,\cdot))_\sharp\UU\]
for ${\pi^0}_\sharp T^n_\tau=M^n_\tau$-a.e. $x\in\X$ and for any $0\le n\le\finalstep T\tau$.\\
\emph{Proof of claim 2}: This proof, being straightforward, is omitted.
\smallskip

\emph{Claim 3}: Let $\aalpha_\tau^n$ be the plans defined in Definition \ref{def:step-plans} associated with $\left(\frF[M_\tau^n]\right)_n$, i.e.
\begin{align*}
\aalpha_\tau^1&:=T_\tau^0,\\
\aalpha_\tau^n&:=\int ( \delta_{(x_0,x_1,\dots,x_{n-1})} \otimes T_{\tau,x_{n-1}}^{n-1}) \de \aalpha_\tau^{n-1}(x_0,x_1,\dots,x_{n-1})\in\prob(\X^{n+1}),\quad2\le n\le\finalstep T\tau.
\end{align*}
Then, $\aalpha^n_\tau=(X^0_\tau,X^1_\tau,\dots,X^n_\tau)_\sharp\P$, for any $1\le n\le\finalstep T\tau$.\\
\emph{Proof of claim 3}: We prove the claim by induction argument. First, we have $\alpha^1_\tau=(X^0_\tau,X^1_\tau)_\sharp\P$ and its proof is similar to what is done in \eqref{eq:Mn-joint}. Assume now the claim holds for $n\ge1$ and let us prove it holds for $n+1$. Given any Borel test function $\phi:\X^{n+2}\to\R$, we have
\begin{align*}
\int \phi(x_0,\dots,x_{n+1})\de\aalpha_\tau^{n+1}&=\int \phi(x_0,\dots,x_n,x_{n+1})\de T_{\tau,x_n}^{n}(x_{n+1}) \de \aalpha_\tau^{n}(x_0,\dots,x_{n})\\
&=\int \phi(x_0,\dots,x_n,x_n+\tau g(x_n,u))\de\UU(u) \de \aalpha_\tau^{n}(x_0,\dots,x_{n})\\
&=\int \phi(x_0,\dots,x_n,x_n+\tau g(x_n,u))\de\left[(X^0_\tau,\dots,X^n_\tau,V^n)_\sharp\P\right](x_0,\dots,x_n,u)\\
&=\int \phi(X^0_\tau(\omega),\dots,X^n_\tau(\omega),X^{n+1}_\tau(\omega))\de\P(\omega),
\end{align*}
where we used Claim 2 and the independence of $X^0_\tau,\dots,X^n_\tau,V^n$.
This concludes the proof of the claim and thus the whole proof, since $\aalpha_\tau=\aalpha_\tau^{\finalstep T\tau}$.
\end{proof}

\begin{proof}[Proof of Corollary \ref{cor:6.1}]
Thanks to Proposition \ref{prop:SDFeta}, we can apply Theorem \ref{prop:young} and get that $X_\tau$ converges to ${\mathrm s}_{{\bar X}_\sharp\P} \circ \bar X$ in $L^2(\Omega, \cB, \P; \rmC([0,T]; \X))$, as $\tau\downarrow 0$, where ${\mathrm s}_{{\bar X}_\sharp\P}$ is as in \eqref{eq:s} for the maximal totally dissipative \PVF $\bri{\frF}$. The conclusion follows recalling \eqref{eq:bgfcirc}, see also \cite[Theorem 3.4]{CSS2grande}.
\end{proof}

\subsection{Interaction field}\label{ex:interaction}
Let $f:\X\times\X\to\X$ be a continuous map and let $b_f: \X \times \prob_2(\X) \to \X$ be the barycenter of $f$, defined by
\begin{equation}\label{eq:fbari}
 b_f(x,\mu):=\int_\X f(x,y)\,\d\mu(y), \quad (x, \mu) \in \X \times \prob_2(\X).   
\end{equation}

Given a standard Borel probability space $(\Omega, \cB, \P)$ and $\bar X \in L^2(\Omega, \cB, \P; \X)$, we consider the deterministic ODE
\begin{equation}
    \dot{X}_t=b_f(X_t,{X_t}_\sharp\P),\quad t \in [0,T], \quad X_{t=0}=\bar X.
\end{equation}
We assume that there exists $L>0$ such that
\begin{equation}\label{eq:IntL2}
|f(x, y)|^2 \le L (1+|x|^2+|y|^2) \quad \text{ for every } x, y \in \X,
\end{equation}
and that the vector field $\tilde{f}:\X\times\X\to\X\times\X$ defined by $\tilde{f}(x,y):=\left(f(x,y),\,f(y,x)\right)$ is $\lambda$-dissipative in $\X \times \X$ for some $\lambda\in\R$, i.e.
\begin{equation}\label{eq:dissiptildef}
    \langle\tilde{f}(x_1,y_1)-\tilde{f}(x_0,y_0),\,(x_1,y_1)-(x_0,y_0)\rangle\le\lambda \left(|x_1-x_0|^2+|y_1-y_0|^2\right).
\end{equation}

\begin{remark}
    A particular case in which \eqref{eq:dissiptildef} holds occurs when $f(x,y)=h(x-y)$ and $h:\X\to\X$ is $\lambda$-dissipative and odd.
\end{remark}

Similarly as for the example in Section \ref{ex:sgd1}, we consider the following stochastic scheme driven by the interaction field $f$.

\begin{definition}[IDF]\label{def:IDF} Let $(\Omega, \cB, \P)$ be a standard Borel probability space and $T>0$. Define $J:=\left\{\frac{T}{N}\,:\,N\in\N\setminus\{0\}\right\}$. We say that a family of maps $X_\tau:\Omega\to \rmC([0,T];\X)$, $\tau \in J$, is an \emph{Interaction Dissipative Flow (\IDF)} for $f$ if there exist random variables $(X^n_\tau)_{0 \le n \le N, \tau \in J}, (Y^{n}_\tau)_{0 \le n \le N, \tau \in J} \subset L^2(\Omega; \X)$ such that
\begin{itemize}
    \item $Y^{n}_\tau$, $X^{0}_\tau$ and $(Y^{m}_\tau)_{m<n}$ are independent for every $0\le n\le N$, $\tau \in J$,
    \item $(X^n_\tau)_\sharp \P = (Y^{n}_\tau)_\sharp \P$ for every $0\le n\le N$, $\tau \in J$,
    \item $X^{n+1}_\tau=X^n_\tau+\tau\, f(X^n_\tau,Y^{n}_\tau)$ for every $0\le n\le N-1$, $\tau \in J$,
\end{itemize}
and 
\begin{equation}\label{eq:IDFjoint}
X_\tau=G_\sharp\left(X^0_\tau,X^1_\tau,\dots,X^N_\tau\right) \quad \text{ for every } \tau \in J.
\end{equation}
\end{definition}

\begin{remark}
    Notice that, by construction, for any $\tau\in J$ and $n=0,\dots,N$, we have that $Y^{n}_\tau$ is independent also of $X^n_\tau$.
\end{remark}

We have the following result.
\begin{corollary}\label{cor:intfield} In the setting of Definition \ref{def:IDF}, assume that $(X^0_\tau)_{\tau\in J}$ converges $\P$-a.s.~to some $\bar X \in L^2(\Omega, \cB, \P; \X)$ and that $W_2(({X^0_\tau})_\sharp\P,({\bar X})_\sharp\P)\to 0$ as $\tau\downarrow 0$.
Then $X_\tau$ converges in $L^2(\Omega, \cB, \P; \rmC([0,T]; \X))$ as $\tau \downarrow 0$ to the unique solution of the deterministic ODE
\[\dot{X}_t=b_f(X_t,{X_t}_\sharp\P),\quad t \in [0,T],\quad X_{t=0}=\bar X.\]

\end{corollary}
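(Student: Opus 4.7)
The plan is to mirror the proof template used for the Stochastic Dissipative Flow in Section \ref{ex:sgd1}, suitably adapted to the nonlocal interaction setting. I would first introduce the \PVF
\[\frF[\mu] := (\pi^0, f)_\sharp(\mu \otimes \mu), \quad \mu \in \prob_2(\X),\]
whose disintegration with respect to $\sfx_\sharp \frF[\mu] = \mu$ is $\frF_x[\mu] = f(x, \cdot)_\sharp \mu$ for $\mu$-a.e.~$x$, so that the barycenter at $x$ equals exactly $b_f(x, \mu)$ and consequently $\bri\frF[\mu] = (\ii_\X, b_f(\cdot, \mu))_\sharp \mu$. The growth assumption \eqref{eq:IntL2} yields $|\frF[\mu]|_2^2 \le L\bigl(1 + 2\rsqm{\mu}^2\bigr)$, which ensures solvability of the Explicit Euler scheme at every $\bar\mu \in \prob_2(\X)$ via \cite[Lemma 5.13]{CSS}.

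Next, I would prove that $\bri\frF$ is totally $\lambda$-dissipative. Given $\mu, \nu \in \prob_2(\X)$ and $\ttheta \in \Gamma(\bri\frF[\mu], \bri\frF[\nu])$, since both marginals are concentrated on graphs there exists $\ggamma \in \Gamma(\mu, \nu)$ with $\ttheta = (\pi^0, b_f(\pi^0, \mu), \pi^1, b_f(\pi^1, \nu))_\sharp \ggamma$. Expanding $b_f$ as an integral against a second, independent copy of $\ggamma$ and using Fubini, the pairing $\int\langle v_1 - v_0, x_1 - x_0\rangle\,\d\ttheta$ rewrites as
\[\iint \langle f(x_1, y_1) - f(x_0, y_0), x_1 - x_0 \rangle\,\d\ggamma(y_0, y_1)\,\d\ggamma(x_0, x_1).\]
Renaming the integration variables shows that this equals the same expression with the roles of the two copies of $\ggamma$ swapped; averaging the two versions and applying the dissipativity \eqref{eq:dissiptildef} of $\tilde f$ bounds the result by $\lambda \int |x_1 - x_0|^2\,\d\ggamma(x_0, x_1)$, which is precisely \eqref{eq:total-dissipativity}.

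In the third step, I would adapt Proposition \ref{prop:SDFeta} to show that $(X_\tau)_\sharp\P = \eeta_\tau$, where $\eeta_\tau$ is the probabilistic interpolation from Definition \ref{def:intEE} associated with $\frF$ and initial datum $(X^0_\tau)_\sharp\P$. The induction proceeds as in the SDF case: assuming $(X^n_\tau)_\sharp\P = M^n_\tau$, the independence of $Y^n_\tau$ from $X^n_\tau$ combined with the identity of laws $(Y^n_\tau)_\sharp\P = M^n_\tau$ gives $(X^n_\tau, Y^n_\tau)_\sharp\P = M^n_\tau \otimes M^n_\tau$, hence the conditional law of $X^{n+1}_\tau = X^n_\tau + \tau f(X^n_\tau, Y^n_\tau)$ given $X^n_\tau = x$ coincides with the single-step plan $T^n_{\tau,x} = (x + \tau f(x, \cdot))_\sharp M^n_\tau$. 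Propagating this through the multi-step plan $\aalpha_\tau$ as in Claims~1--3 of Proposition \ref{prop:SDFeta} delivers the identification; Theorem \ref{prop:young} then yields convergence of $X_\tau$ to $\mathrm s_{\bar\mu} \circ \bar X$ in $L^2(\Omega;\rmC([0,T];\X))$, with $\bar\mu := \bar X_\sharp \P$.

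The step I expect to be the main obstacle is the final identification of this $L^2$-limit with the deterministic solution of $\dot X_t = b_f(X_t, (X_t)_\sharp \P)$, exactly as was required for \eqref{eq:bgfcirc} in the SDF case. Concretely, one needs to show that the minimal-norm selection $\tilde\bb^\circ[\mu]$ of the Lagrangian representation of a maximal totally dissipative extension of $\bri\frF$ coincides $\mu$-a.e.~with $b_f(\cdot, \mu)$; this should follow from \cite[Theorem 3.24]{CSS2grande} once one verifies continuity of $b_f:\X\times\prob_2(\X)\to\X$, which in turn is a consequence of the continuity of $f$ together with the uniform $2$-integrability granted by \eqref{eq:IntL2}.
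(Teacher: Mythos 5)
Your proposal follows essentially the same route as the paper's own (sketched) proof: you introduce the \PVF $\frF[\mu]=(\pi^0,f)_\sharp(\mu\otimes\mu)$, identify its barycenter with $b_f$, establish total $\lambda$-dissipativity of $\bri\frF$, transport the probabilistic representation of the Explicit Euler scheme as in Proposition~\ref{prop:SDFeta}, and invoke Theorem~\ref{prop:young} plus maximality of $\bri\frF$ to conclude. The symmetrization argument you give for total dissipativity (rewriting the pairing as a double integral against $\ggamma\otimes\ggamma$, swapping dummy variables, averaging, and applying \eqref{eq:dissiptildef} to $\tilde f$) is exactly the computation hidden behind the paper's statement of \eqref{eq:dissipInterf}, and it is correct.

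The one point where your emphasis differs from the paper: you propose to verify pointwise (joint) continuity of $b_f:\X\times\prob_2(\X)\to\X$ in order to apply the maximality criterion of \cite[Theorem 3.24]{CSS2grande}, whereas the paper proves and uses the slightly different statement in Lemma~\ref{le:skoro}, namely that $\mu_n\to\mu$ in $\prob_2(\X)$ implies $(b_f(\cdot,\mu_n))_\sharp\mu_n\to(b_f(\cdot,\mu))_\sharp\mu$ weakly. Your stronger form does hold (by the same Skorokhod-plus-Vitali argument used in the proof of Lemma~\ref{le:skoro}) and is sufficient, so this is a stylistic rather than a substantive difference; but if one intends to cite \cite[Theorem 3.24]{CSS2grande} verbatim one should make sure the continuity hypothesis is stated in the form that theorem actually requires, which is what the paper's Lemma~\ref{le:skoro} is calibrated to do.
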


The result can be proven similarly as done in Section \ref{ex:sgd1} by introducing the following \PVF  $\frF\subset\prob_2(\TX)$,
\begin{equation}\label{eq:Fint1}
\frF[\mu]:=(\pi^0,f)_\sharp(\mu\otimes\mu),\quad\mu\in\prob_2(\X).
\end{equation}
Note that assumption \eqref{eq:IntL2} and \cite[Lemma 5.13]{CSS} give the solvability of the Explicit Euler scheme for $\frF$. 
For $\mu$-a.e. $x\in\X$, its disintegration w.r.t. $\sfx_\sharp\frF=\mu$ is given by
\[\frF_x[\mu]=f(x,\cdot)_\sharp\mu\]
and so, for any $\mu\in\prob_2(\X)$, 
the \PVF $\bri\frF$ is given by
\[\bri\frF[\mu]=\left(\ii_\X, b_f(\cdot,\mu)\right)_\sharp\mu,\quad\mu\in\prob_2(\X).\]
The dissipativity condition on $\tilde{f}$ in \eqref{eq:dissiptildef} gives that
$b_f$ is $\lambda$-dissipative, i.e. for any $\mu_0,\mu_1\in\prob_2(\X)$ and any $\gamma\in\Gamma(\mu_0,\mu_1)$,
\begin{equation}\label{eq:dissipInterf}
    \int_{\X^2}\langle b_f(x_1,\mu_1)-b_f(x_0,\mu_0),\,x_1-x_0\rangle\de\gamma(x_0,x_1)\le\lambda\int |x_1-x_0|^2\de\gamma(x_0,x_1).
\end{equation}
Therefore, $\bri\frF$ is totally $\lambda$-dissipative, thus $\frF$ is unconditionally $\lambda$-dissipative. 
Moreover, the continuity and growth conditions of $f$ imply that $b_f$ satisfies a continuity condition as in the following result.

\begin{lemma}\label{le:skoro} Let $f: \X \times \X \to \X$ be a continuous function satisfying \eqref{eq:IntL2}. Then
\begin{equation}\label{eq:intercont}
    \mu_n \to \mu \text{ in } \prob_2(\X) \quad \Rightarrow \quad (b_f(\cdot, \mu_n))_\sharp \mu_n \to  (b_f(\cdot, \mu))_\sharp \mu\, \text{ weakly in } \prob(\X).
\end{equation}
\end{lemma}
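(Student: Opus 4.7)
The plan is to reduce the problem to an almost-sure statement via a double Skorokhod representation in $\prob_2$ and then close by a uniform-integrability (Vitali) argument. The growth control \eqref{eq:IntL2} together with the fact that convergence in $\prob_2$ is strictly stronger than narrow convergence --- producing $L^2$-Skorokhod representatives --- is what makes the argument work.

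First, I would apply the Skorokhod theorem in $\prob_2$ on two auxiliary probability spaces $(\Omega_i,\P_i)$, $i=1,2$, obtaining random variables $X_n, X$ on $\Omega_1$ with laws $\mu_n, \mu$ and $Y_n, Y$ on $\Omega_2$ with the same respective laws, such that $X_n\to X$ in $L^2(\Omega_1;\X)$ and $Y_n\to Y$ in $L^2(\Omega_2;\X)$ (and both almost surely, up to a subsequence). On the product $(\Omega_1\times\Omega_2,\P_1\otimes\P_2)$ the lifts $\tilde X_n(\omega_1,\omega_2):=X_n(\omega_1)$ and $\tilde Y_n(\omega_1,\omega_2):=Y_n(\omega_2)$ are independent with marginals $\mu_n$, so that by Fubini
\[
b_f(X_n,\mu_n)(\omega_1)=\int_\X f(X_n(\omega_1), y)\,\d\mu_n(y)=\int_{\Omega_2} f(\tilde X_n,\tilde Y_n)(\omega_1,\cdot)\,\d\P_2,
\]
and likewise for $b_f(X,\mu)$.

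Next, I would show $b_f(X_n,\mu_n)\to b_f(X,\mu)$ in $L^2(\Omega_1;\X)$. Conditional Jensen in the Hilbert space $\X$ gives
\[
\int_{\Omega_1} |b_f(X_n,\mu_n)-b_f(X,\mu)|^2\,\d\P_1\le\int_{\Omega_1\times\Omega_2}|f(\tilde X_n,\tilde Y_n)-f(\tilde X,\tilde Y)|^2\,\d(\P_1\otimes\P_2).
\]
Continuity of $f$ and the a.s.~convergence $(\tilde X_n,\tilde Y_n)\to(\tilde X,\tilde Y)$ make the integrand on the right converge pointwise a.e.\ to $0$. The bound \eqref{eq:IntL2} dominates $|f(\tilde X_n,\tilde Y_n)|^2$ by $L(1+|\tilde X_n|^2+|\tilde Y_n|^2)$, and the $L^2$-convergence of the Skorokhod representations delivers uniform integrability of $|\tilde X_n|^2$ and $|\tilde Y_n|^2$, hence of the whole family $(|f(\tilde X_n,\tilde Y_n)-f(\tilde X,\tilde Y)|^2)_n$. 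Vitali's theorem then forces the right-hand side to vanish.

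Finally, $L^2$-convergence on $\Omega_1$ implies convergence in probability, and for any $\phi\in\rmC_b(\X)$ bounded convergence yields
\[
\int\phi\,\d\bigl[(b_f(\cdot,\mu_n))_\sharp\mu_n\bigr]=\int_{\Omega_1}\phi(b_f(X_n,\mu_n))\,\d\P_1\to\int_{\Omega_1}\phi(b_f(X,\mu))\,\d\P_1=\int\phi\,\d\bigl[(b_f(\cdot,\mu))_\sharp\mu\bigr],
\]
which is the claimed narrow convergence. The only delicate point is the Vitali step: it is precisely where the $\prob_2$-strength of $\mu_n\to\mu$ (not just narrowness) enters, via the $L^2$-convergent Skorokhod representatives on both factor spaces.
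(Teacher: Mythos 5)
Your proof is correct and follows essentially the same approach as the paper: Skorokhod representation in $\prob_2$, pointwise convergence via continuity of $f$, and a uniform-integrability/dominated-convergence argument driven by the quadratic growth bound \eqref{eq:IntL2}. The paper works on a single probability space and applies two layers of (generalized) dominated convergence, whereas you pass through conditional Jensen and Vitali on a product space, which is a cosmetic variation that in fact yields the slightly stronger $L^2$ (hence $W_2$) convergence of the barycentric fields rather than mere narrow convergence.
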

\begin{proof} Since $\mu_n$ converges to $\mu$ in $\prob_2(\X)$, we can find a probability space $(\Omega', \cB', \P')$ and random variables $X_n, X$ in $L^2(\Omega', \cB', \P'; \X)$ such that $(X_n)_\sharp \P' = \mu_n$, $X_\sharp \P'= \mu$ and $X_n \to X$ $\P'$-a.e.~and in $L^2(\Omega', \cB', \P';\X)$ as $n \to + \infty$ (see e.g.~\cite[Theorem B.5]{CSS2grande} or \cite[Proposition 3.23]{CSS2piccolo}). Thus, for every $\varphi \in \rmC_b(\X)$, we may write
\[ \int_\X \varphi \de (b_f(\cdot, \mu_n)_\sharp \mu_n) = \int_{\Omega'} \varphi \left ( \int_{\Omega'} f(X_n(\omega'), X_n(\omega))\de \P'(\omega)  \right ) \de \P'(\omega'). \]
Observe that for $\P'$-a.e.~$\omega \in \Omega'$ and $\P'$-a.e.~$\omega' \in \Omega'$ we have
\[ f(X_n(\omega'), X_n(\omega)) \to f(X(\omega'), X(\omega)) \text{ as } n \to +\infty,\]
since $f$ is continuous. On the other hand condition \eqref{eq:IntL2} gives that
\[ |f(X_n(\omega'), X_n(\omega))|^2 \le L (1+ |X_n(\omega')|^2 + |X_n(\omega)|^2) \quad \text{ for every } \omega, \omega' \in \Omega.\]
Since $X_n(\omega) \to X(\omega)$ for $\P'$-a.e.~$\omega \in \Omega$ and 
\[ \int_\Omega L (1+ |X_n(\omega')|^2 + |X_n(\omega)|^2) \de \P'(\omega) \to \int_\Omega L (1+ |X(\omega')|^2 + |X(\omega)|^2) \de \P'(\omega).\]
By a variant of the dominated convergence theorem (see e.g.~\cite[Theorem 2.8.8, Proposition 4.7.30]{Bogachev}), we deduce that
\[ \int_\Omega f(X_n(\omega'), X_n(\omega)) \de \P'(\omega) \to \int_\Omega f(X(\omega'), X(\omega)) \de \P'(\omega) \quad \text{for $\P'$-a.e.~}\omega' \in \Omega \text{ as } n \to + \infty.  \]
A further application of the dominated converge theorem (recall that $\varphi$ is bounded) gives that 
\[ \int_\Omega \varphi \left ( \int_\Omega f(X_n(\omega'), X_n(\omega))\de \P'(\omega)  \right ) \de \P'(\omega') \to \int_\Omega \varphi \left ( \int_\Omega f(X(\omega'), X(\omega))\de \P'(\omega)  \right ) \de \P'(\omega').\]
In other words 
\[ \int_\X \varphi \de( b_f(\cdot, \mu_n)_\sharp \mu_n) \to \int_\X \varphi \de( b_f(\cdot, \mu_\sharp) \mu). \]
By arbitrareity of $\varphi$ this concludes the proof.
\end{proof}

The continuity in \eqref{eq:intercont} is used as in Section \ref{ex:sgd1} to ensure the maximality of $\bri{\frF}$ so that the analogue of \eqref{eq:bgfcirc} follows.

We can then prove an analogous of Proposition \ref{prop:SDFeta} with $\frF$ as in \eqref{eq:Fint1} and $X_\tau$ as in Definition \ref{def:IDF}. The proof of Corollary \ref{cor:intfield} is then exactly the same of Corollary \ref{cor:6.1}.

\subsection{Nonlocal Stochastic Dissipative Flow}\label{ex:nonlocalSGD}
Assume that $\dim(\X) \ge 2$ and let $\prob_b(\X)$ be the space of probability measures with bounded support. We take a nonlocal vector field $b:\X \times \prob_b(\X) \to\X$ and,  given a standard Borel probability space $(\Omega, \cB, \P)$ and $\bar X \in L^2(\Omega, \cB, \P; \X)$, we consider the deterministic ODE
\begin{equation}
    \dot{X}_t=b(X_t,{X_t}_\sharp\P),\quad t \in [0,T], \quad X_{t=0}=\bar X.
\end{equation}

\begin{example}
The vector field $b$ could be for example a \emph{cylinder} vector field of the form
\[ b(x,\mu) = \sum_{i=1}^N \psi_i \left (\int \varphi_1^i \de \mu, \dots, \int \varphi_{N_i}^i \de \mu\right ) k_i(x), \quad (x, \mu) \in \X \times \prob_2(\X),\]
where $\varphi^i_j : \X \to \R$, $\psi_i : \R^{N_i} \to \R$, and $k_i : \X \to \X$ are smooth and bounded functions.
\end{example}
We assume that $b$ is $\lambda$-dissipative, for some $\lambda\in\R$, i.e.~for any $\mu_0,\mu_1\in\prob_b(\X)$ there exists $\ggamma\in\Gamma_o(\mu_0,\mu_1)$ such that
\begin{equation}\label{eq:dissgnonloc}
    \int_{\X^2}\langle b(x_1,\mu_1)-b(x_0,\mu_0),x_1-x_0\rangle\de\ggamma(x_0,x_1)\le\lambda\int|x_1-x_0|^2\,\d\ggamma(x_0,x_1) = \lambda W_2^2(\mu_0, \mu_1).
\end{equation}

In addition, we assume that $b$ satisfies the following continuity condition:
\begin{enumerate}[label=(C\arabic*)]
\item\label{contb} whenever $(x_n, \mu_n), (x, \mu) \in \X \times \prob_b(\X)$ are such that $x_n \in \supp(\mu_n)$ for every $n \in \N$, the supports of $\mu_n$ are equi-bounded, and $|x-x_n| + W_2(\mu_n, \mu) \to 0$ as $n \to + \infty$, then $b(x_n, \mu_n) \to b(x, \mu)$.
\end{enumerate}
Furthermore, we assume that $b$ is the barycenter of some \PVF $\frF \subset \prob_b(\TX)$; recall that this means that for every $\mu \in \prob_b(\X)$, if we consider the disintegration $\frF[\mu]
=\int \Phi_x\,\d\mu(x)$, then
\begin{equation}\label{eq:isthebari}
    b(x,\mu)=
    \int v\,\d\Phi_x(v)\quad\text{for $\mu$-a.e. $x\in \X$}, \quad \mu \in \prob_b(\X).
\end{equation}
\begin{remark} Condition \eqref{eq:isthebari} is satisfied for example when $b$ can be represented as a stochastic average of a family of Borel vector fields $g:\X \times \prob_b(\X)\times\U\to\X$, where $(\U,\UU)$ is a probability space endowed with a non-atomic probability measure, i.e.
\begin{equation}\label{eq:part}
b(x,\mu)=\int_\U g(x,\mu,u)\,\d\UU(u), \quad  \frF[\mu]:=(\pi^0,g(\cdot,\mu,\cdot))_\sharp(\mu\otimes\UU),\quad (x,\mu) \in \X\times \prob_b(\X).    
\end{equation}
\end{remark}
In the general case, we require the following growth and local boundedness conditions on the chosen \PVF $\frF$ satisfying \eqref{eq:isthebari}:
\begin{enumerate}[label=(F\arabic*)]
\item\label{itemF1} there exists a constant $a\ge0$ such that 
\begin{equation}
    \label{eq:bound-intro2}
    \la v,x\ra\le a(1+|x|^2)
    \quad\text{for $\frF[\mu]$-a.e.~$(x,v)\in \TX$,} \quad \mu \in \prob_b(\X),
\end{equation}
\item\label{itemF2} for every $R>0$ there exists $\rho_R>0$ such that 
\[\supp(\mu)\subset B_R(0)\quad\Rightarrow\quad\supp(\frF[\mu])\subset B_{\rho_R}(0).\] 
\end{enumerate}
We show that the above conditions \ref{itemF1} and \ref{itemF2} imply the solvability of the Explicit Euler scheme for $\frF$.

\begin{lemma}\label{le:ee}
Let $\frF \subset \prob_2(\TX)$ be satisfying \ref{itemF1} and \ref{itemF2}, and let $\bar{\mu} \in \prob_b(\X)$, $T>0$. Then, the Explicit Euler scheme for $\frF$ is approximately solvable at $\bar\mu$ up to time $T$, according to Definition \ref{def:locsolv}.
\end{lemma}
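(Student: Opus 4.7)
The plan is to build, for any $\bar\mu \in \prob_b(\X)$ with $\supp(\bar\mu) \subset B_{R_0}(0)$, a sequence $\bar\mu^j \in \prob_b(\X) \subseteq \dom(\frF)$ with $\#\supp(\bar\mu^j)$ finite, $\supp(\bar\mu^j) \subset B_{R_0}(0)$, and $W_2(\bar\mu^j,\bar\mu) \to 0$ (for instance, via a standard partition/quantization of $B_{R_0}(0)$). The task then reduces to producing a threshold $\bar\tau > 0$ and a constant $L > 0$, depending only on $R_0, T, a$ and \emph{not} on $j$ or $\tau$, such that one can iteratively select $\Phi_\tau^n \in \frF[M_\tau^n]$ with $|\Phi_\tau^n|_2 \le L$, ensuring that $M_\tau^{n+1} = \exp^\tau_\sharp \Phi_\tau^n$ lies again in $\dom(\frF)$ at every step $n < \finalstep{T}{\tau}$.

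The crux is a discrete Gronwall-type a priori bound on $R_n := \sup\{|x| : x \in \supp(M_\tau^n)\}$. For any $(x,v) \in \supp(\Phi_\tau^n)$, condition \ref{itemF1} gives pointwise
\[ |x + \tau v|^2 = |x|^2 + 2\tau\langle x,v\rangle + \tau^2 |v|^2 \le (1+2\tau a)|x|^2 + 2\tau a + \tau^2 |v|^2, \]
and, under the inductive hypothesis $R_k \le R_*$ for all $k \le n$, condition \ref{itemF2} bounds $|v| \le \rho_{R_*}$ on $\supp(\Phi_\tau^n)$. Setting $y_n := R_n^2 + 1$, this yields the scalar recursion $y_{n+1} \le (1+2\tau a)y_n + \tau^2 \rho_{R_*}^2$, and iterating for $n\tau \le T+\tau$ gives
\[ y_n \le y_0\, e^{2a(T+\tau)} + \frac{\tau \rho_{R_*}^2 (e^{2a(T+\tau)} - 1)}{2a}. \]
Fixing $R_*^2 := 2(R_0^2+1)\,e^{2a(T+1)}$, $L := \rho_{R_*}$, and then choosing $\bar\tau \in (0,1]$ small enough that the second summand above is dominated by $R_*^2 + 1 - (R_0^2+1)e^{2a(T+\tau)}$ closes the induction: $R_n \le R_*$ for every $n \le \finalstep{T}{\tau}$ and, via \ref{itemF2}, $|\Phi_\tau^n|_2 \le L$ for every such $n$ and every $\tau \in (0,\bar\tau]$. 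Therefore $\mathscr E(\bar\mu^j, \tau, T, L) \neq \emptyset$ for all $j \in \N$ and all $0 < \tau \le \bar\tau$, verifying the three conditions of Definition \ref{def:locsolv}.

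The main obstacle is the coupling between \ref{itemF1} and \ref{itemF2}: the first alone controls the growth of $|x|$ only if the quadratic correction $\tau^2|v|^2$ can be absorbed, while the second bounds $|v|$ only in terms of the current support radius, which is itself what one must control. The resolution, implemented above, is to fix the target radius $R_*$ \emph{a priori} (large enough to absorb the $e^{2aT}$ drift coming from \ref{itemF1}), which pins down a uniform velocity ceiling $\rho_{R_*}$ via \ref{itemF2}; the $\tau^2$ nature of the remaining correction then lets one recover the needed slack by taking $\tau$ sufficiently small. No dissipativity hypothesis on $\frF$ enters this argument --- only the structural bounds \ref{itemF1}--\ref{itemF2}.
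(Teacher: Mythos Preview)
Your proposal is correct and follows essentially the same strategy as the paper: fix an a priori target radius $R_*$ depending only on $R_0,T,a$, set $L:=\rho_{R_*}$ via \ref{itemF2}, derive the recursion $R_{n+1}^2\le(1+2a\tau)R_n^2+2a\tau+\tau^2\rho_{R_*}^2$ from the pointwise bound in \ref{itemF1}, and close by strong induction using a discrete Gronwall estimate. The only cosmetic difference is how the $\tau^2|v|^2$ correction is absorbed: the paper imposes $\bar\tau\le L^{-2}$ so that $\tau^2 L^2\le\tau$ becomes a linear-in-$\tau$ term and then invokes a discrete Gronwall lemma, whereas you keep the $\tau^2\rho_{R_*}^2$ term and exploit its smallness directly by shrinking $\bar\tau$; both routes yield the same conclusion (and your formula should be read in the obvious limiting sense when $a=0$).
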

\begin{proof} For every $R>0$, set
 \[ R':= \mathrm{e}^{aT}( R^2+T(1+2a))^{1/2}+1, \quad L:= \rho_{R'}, \quad \bar{\tau} := L^{-2} \wedge T.\]
In order to prove the lemma it is enough to show the following: for every $\mu \in \prob_b(\X)$ with $\supp(\mu) \subset B_R(0)$, for every $0< \tau < \bar{\tau}$ and every $K=1, \dots, \ceil{T/\tau}$, there exist $(M_\tau^n)_{n=0}^K$ and $(R_{n, \tau})_{n=0}^K$ such that 
 \begin{equation}\label{eq:graffa}
 \begin{cases}  
 M_\tau^0 = \mu, \quad R_{0, \tau}=R, \\
 |\frF[M_\tau^n]|_2 \le L & \quad n=0, \dots, K,\\
M_\tau^{n+1}= \exp^{\tau}_\sharp( \frF[M_\tau^n] ),  &\quad n=0, \dots, K-1,\\
\supp(M_\tau^{n}) \subset \overline{B_{R_{n,\tau}}(0)} \subset B_{R'}(0), &\quad n=0, \dots, K, \\
R_{n+1, \tau}^2 = R_{n, \tau}^2(1+2a\tau) + \tau^2 L^2 + 2a\tau, &\quad n=0, \dots, K-1.
 \end{cases}
 \end{equation}
To prove it, we fix $\mu \in \prob_b(X)$ with $\supp(\mu) \subset B_R(0)$ and $0< \tau < \bar{\tau}$ and we proceed by induction on $K$. When $K=1$, we simply define $M_\tau^0, M_\tau^1$ and $R_{0, \tau}, R_{1,\tau}$ as in \eqref{eq:graffa} and we only have to check that
\[ |\frF[M_\tau^n]|_2 \le L, \quad \supp(M_\tau^{n}) \subset \overline{B_{R_{n,\tau}}(0)} \subset B_{R'}(0), \quad n=0,1. \]
We observe that $\supp(M_\tau^0) = \supp(\mu)= B_R(0) \subset \overline{B_{R_0, \tau}(0)} \subset B_{R'}(0)$ by construction. We also have $\supp(M_\tau^1) \subset \overline{\exp^\tau(\supp(\frF[M_\tau^0]))}$ and, if $(x,v) \in \supp(\frF[M_\tau^0])$, it holds
\[ |x+\tau v |^2 \le |x|^2 + \tau^2 |v|^2 + 2a\tau(1+|x|^2) \le R_{0, \tau}^2(1+2a \tau) + \tau^2 L^2 + 2a\tau = R^2_{1, \tau}, \]
where we have used \ref{itemF1} and the fact that $\supp( \frF[M_\tau^0]) \subset B_{\rho_{R'}}(0)$ by \ref{itemF2}. We deduce that $\supp(M_\tau^1) \subset \overline{B_{R_{1, \tau}}(0)}$ and the inequality $R_{1, \tau} < R'$ is trivial using that $(1+2a\tau) \le \mathrm{e}^{2aT}$ and $\tau \le L^{-2} \wedge T$. Finally $|\frF[M_\tau^n]|_2 \le L$ by \ref{itemF2}, for $n=0,1$.

The induction step $K-1 \Rightarrow K$ can be done exactly in the same way, apart from the inequality $R_{K, \tau} < R'$. This inequality follows by applying \cite[Lemma B.2]{CSS} with $x_n:=R_{n, \tau}^2$, $y:=\tau L^2+2a$, $\alpha :=2a$, $N:=K-1$, so that one gets
\[ R_{n, \tau}^2 \le (R^2 + \tau n (\tau L^2 + 2a))\mathrm{e}^{2an \tau} <(R')^2 \quad \text{ for every } 0 \le n \le K,\]
where we used $\tau \le L^{-2}$ and $\tau n \le T$.
\end{proof}

The solvability of the Explicit Euler scheme for $\frF$ produces a family of measures $(M_\tau^n)_n$ as in \eqref{eq:EE}, for every $\tau \in (0,1)$. We can define a general non-local \SDF for $\frF$ as done in Definition \ref{def:SDF}.

\begin{definition}[Nonlocal SDF]\label{def:nnlocSDF} Let $(\Omega, \cB, \P)$ be a standard Borel probability space, $T>0$, $\frF \subset \prob_b(\TX)$ and let $b: \X \times \prob_b(\X) \to \X$ be a non-local vector field satisfying the continuity condition \ref{contb}, \eqref{eq:isthebari}, \eqref{eq:dissgnonloc}, and \ref{itemF1},\ref{itemF2}. Define $J:=\left\{\frac{T}{N}\,:\,N\in\N\setminus\{0\}\right\}$. We say that a family of maps $X_\tau:\Omega\to \rmC([0,T];\X)$, $\tau \in J$, is a \emph{Nonlocal Stochastic Dissipative Flow for $\frF$} if there exist random variables $(X^n_\tau)_{0 \le n \le N, \tau \in J} \subset L^2(\Omega; \X)$ such that
\begin{itemize}
    \item 
    $(X^n_\tau)_n$ is 
    a Markov chain,
    \item the joint law 
of $\big(X^n_\tau,\tau^{-1}(X^{n+1}_\tau-
X^n_\tau)\big)$ 
is $\frF[M^n_\tau]$,
\end{itemize}
and 
\begin{equation}\label{eq:SDFjoint2}
X_\tau=G_\sharp\left(X^0_\tau,X^1_\tau,\dots,X^N_\tau\right)\quad \text{ for every } \tau \in J.
\end{equation}
\end{definition}

\begin{remark}
In the particular case of $\frF$ as in \eqref{eq:part}, then a sequence $(X_\tau^n)_n$ as in Definition \ref{def:nnlocSDF} can be obtained as follows:
there exist $V^k : \Omega \to \U$, $k \in \N$, such that
 \begin{itemize}
     \item $(X^{0}_\tau)_{\tau \in J}$ and $(V^k)_k$ are independent,
     \item $(V^k)_\sharp \P= \UU$ for every $k \in \N$,
     \item $X^{n+1}_\tau=X^n_\tau+\tau\, g(X^n_\tau,({X^n_\tau})_\sharp\P,V^n)$ for every $0\le n\le N-1$, $\tau \in J$.
\end{itemize}    
\end{remark}
We have the following result.
\begin{corollary}\label{cor:nnsdf} In the setting of Definition \ref{def:nnlocSDF}, with $\dim(\X) \ge 2$, assume that $(X^0_\tau)_{\tau\in J}$ converges $\P$-a.s.~to some $\bar X \in L^2(\Omega, \cB, \P; \X)$ such that $\bar{\mu}:= \bar{X}_\sharp \P \in \prob_b(\X)$ and that $W_2(({X^0_\tau})_\sharp\P,\bar \mu)\to 0$ as $\tau\downarrow 0$. Then $X_\tau$ converges in $L^2(\Omega, \cB, \P; \rmC([0,T]; \X))$ as $\tau \downarrow 0$ to the unique solution of the deterministic ODE
\begin{equation}\label{eq:ODE6.3}
\dot{X}_t=b(X_t,{X_t}_\sharp\P),\quad t \in [0,T],\quad X_{t=0}=\bar X.
\end{equation}
\end{corollary}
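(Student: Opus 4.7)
The plan is to specialize Theorems \ref{thm:main} and \ref{prop:young} to the \PVF $\frF$ fixed in the statement, following the template of Corollaries \ref{cor:6.1} and \ref{cor:intfield}. Three ingredients must be produced: (i) a maximal totally $\lambda$-dissipative extension $\hat\frF$ of $\bri\frF$ whose minimal selection at $\mu$ equals $b(\cdot,\mu)$; (ii) approximate solvability of the Explicit Euler scheme for $\frF$ at $\bar\mu$ up to time $T$; (iii) the probabilistic identification $(X_\tau)_\sharp\P=\eeta_\tau$, where $\eeta_\tau$ is the probabilistic representation built from $\frF$ in Definition \ref{def:intEE} starting at $(X^0_\tau)_\sharp\P$.

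Item (ii) is immediate from Lemma \ref{le:ee} by approximating $\bar\mu\in\prob_b(\X)$ in $W_2$ by finitely supported $(\bar\mu^j)_j$ whose supports all lie in a common ball $B_R(0)\supset\supp(\bar\mu)$ (for instance by optimal quantization), which yields the $j$-uniform bound $L=\rho_{R'}$. Item (iii) is the exact analogue of Proposition \ref{prop:SDFeta}: using the Markov property of $(X^n_\tau)_n$ together with the prescribed joint law $\frF[M^n_\tau]$ of the position-velocity pair, an induction on $n$ identifies the law of $(X^0_\tau,\ldots,X^n_\tau)$ with the multi-step plan $\aalpha^n_\tau$ of Definition \ref{def:step-plans}, the Markov transition replacing the independent-noise argument used for $V^n$ in Proposition \ref{prop:SDFeta}.

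The main obstacle is (i). Since $\bri\frF[\mu]=(\ii_\X,b(\cdot,\mu))_\sharp\mu$ is graph-concentrated, total $\lambda$-dissipativity requires
\[\int\langle b(x_1,\mu_1)-b(x_0,\mu_0),x_1-x_0\rangle\,\d\ggamma\le\lambda\int|x_1-x_0|^2\,\d\ggamma\]
for \emph{every} $\ggamma\in\Gamma(\mu_0,\mu_1)$, which is strictly stronger than the single-optimal-coupling form \eqref{eq:dissgnonloc}. I would lift \eqref{eq:dissgnonloc} to arbitrary couplings by a disintegration/approximation argument: approximate $(\mu_0,\mu_1,\ggamma)$ by empirical measures with equi-bounded supports on which \eqref{eq:dissgnonloc} reduces to a finite matching inequality, and pass to the limit using the continuity \ref{contb} together with the local boundedness \ref{itemF2}. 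Alternatively, \ref{contb} combined with \cite[Theorem 3.14, Theorem 3.24]{CSS2grande} can be invoked to produce directly a unique maximal totally $\lambda$-dissipative $\hat\frF\supset\bri\frF$ whose minimal selection is $b(\cdot,\mu)$.

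Once (i)--(iii) are in place, Theorem \ref{prop:young} yields $\mathrm Z_\tau\to\mathrm s_{\bar\mu}\circ\bar X$ in $L^2(\Omega;\rmC([0,T];\X))$ with $\mathrm Z_\tau$ being the process whose law is $\eeta_\tau$; by (iii) this process can be taken to be $X_\tau$ itself. The identity $\hat\ff^\circ[\mu_t]=b(\cdot,\mu_t)$ from (i) and Theorem \ref{thm:implift} then ensure that for $\P$-a.e.~$\omega$ the limit curve $t\mapsto\mathrm s_{\bar\mu}[\bar X(\omega)](t)$ is an absolutely continuous solution of \eqref{eq:ODE6.3}, and uniqueness of this solution follows from the dissipativity of $b$ via a standard Gronwall estimate applied to two coupled solutions, concluding the proof.
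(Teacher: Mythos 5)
Your items (ii) and (iii) match the paper's route: approximate solvability follows from Lemma \ref{le:ee} applied to finitely supported approximations of $\bar\mu$ with equi-bounded supports, and the identification $(X_\tau)_\sharp\P=\eeta_\tau$ (Claim~5 of the paper) is exactly the Markov-chain analogue of Proposition \ref{prop:SDFeta} that you describe.

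Item~(i), which you correctly flag as the main obstacle, is where the proposal breaks down. Your first route --- approximate $(\mu_0,\mu_1,\ggamma)$ by empirical measures, use \eqref{eq:dissgnonloc} on them, and pass to the limit --- cannot work: on finitely supported measures \eqref{eq:dissgnonloc} still only gives the inequality along \emph{one} optimal coupling, not along the arbitrary coupling $\ggamma$ you want. You would be asserting that metric $\lambda$-dissipativity plus continuity implies total $\lambda$-dissipativity directly, which is false in general and in particular does not use $\dim(\X)\ge 2$ anywhere, so it would prove more than is true. Your alternative route is also off target: \cite[Theorems 3.14, 3.24]{CSS2grande} concern maximality for totally dissipative or pointwise-dissipative fields; they do not bridge the gap from metric to total dissipativity for a genuinely nonlocal $b(x,\mu)$. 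The paper instead restricts $\bri\frF$ to the core $\rmC=\bigcup_N\prob_{\# N}(\X)$ of finitely supported measures, verifies that the restriction $\Gg$ is demicontinuous on each $\rmC_N$ (using \ref{contb}, \ref{itemF2}), and then invokes \cite[Theorem 8.5]{CSS2grande} --- this is precisely where $\dim(\X)\ge 2$ is used --- to conclude that $\Gg$ is totally $\lambda$-dissipative with a \emph{unique} maximal totally $\lambda$-dissipative extension $\hat\frF$ whose minimal selection restricts to $\Gg$ on the core. It then shows $\bri\frF\subset\hat\frF$ by a closure-in-$\prob_2^{sw}$ argument, and finally identifies $\hat{\ff}^\circ[\mu]=b(\cdot,\mu)$ on all of $\prob_b(\X)$ by a variational argument: test total dissipativity of $\hat\frF$ against the deterministic plan $\ggamma=(\ii_\X,\ii_\X+\eps g)_\sharp\mu$, let $\eps\downarrow0$, and use arbitrariness of $g\in\rmC_b(\X;\X)$. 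Without some version of this core-extension machinery, your proof of the crucial total dissipativity and of the identification of the minimal selection remains a gap.
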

\begin{proof} Define 
\[\bri{\frF}[\mu]=\left(\ii_\X,\,b(\cdot,\mu)\right)_\sharp\mu, \quad \mu \in \prob_b(\X),\]
and notice that $\bri{\frF}$ is $\lambda$-dissipative by \eqref{eq:dissgnonloc}.
Consider the set $\displaystyle \rmC := \bigcup_{N \in \N} \prob_{\# N}(\X)$, where
\[\prob_{\# N}(\X) := \left \{ \mu \in \prob(X) : \# \supp(\mu) = N \right \}.\]
Notice that $\rmC \subset \prob_2(\X)$ is dense and it is a core in the sense of \cite[Definition 8.1]{CSS2grande}, with $\rmC_N:=\prob_{\# N}(\X)$. We define the \PVF $\Gg$ as the restriction of $\bri{\frF}$ to $\rmC$. \smallskip

\noindent\emph{Claim 1}: for any $N\in\N$, $\Gg$ is demicontinuous over $\rmC_N$ in the following sense:
\begin{equation}\label{eq:Gdemic}
    \mu_n,\mu\in\rmC_N,\,\mu_n \to \mu \text{ in } \prob_2(\X)\quad \Rightarrow \quad \Gg[\mu_n] \to  \Gg[\mu] \text{ in } \prob_2^{sw}(\TX).
\end{equation}
\emph{Proof of claim 1}: since $\mu_n, \mu$ are all concentrated on $N$ distinct points, we have that the supports of $\mu_n$ and $\mu$ are all contained in $B_R(0)$, for some $R>0$. The local boundedness assumption \ref{itemF2} on $\frF$, gives in particular that the supports of $\Gg[\mu_n]=\bri{\frF}[\mu_n]$ are all contained in $B_ \varrho(0)$ for some $\varrho >0$. This implies that 
\[ \sup_n \int |v|^2 \de \Gg[\mu_n](v) < + \infty.\]
To show that $\Gg[\mu_n] \to  \Gg[\mu] \text{ in } \prob_2^{sw}(\TX)$ it remains only to observe that $\Gg[\mu_n] \to  \Gg[\mu]$ in $\prob(\X^s \times \X^w)$. We proceed as in the proof of Lemma \ref{le:skoro}. Let $\varphi \in \rmC_b(\X^s \times \X^w)$ and $(\Omega', \P')$ be a standard Borel probability space. Let $X_n, X \in L^2(\Omega', \P')$ be such that $(X_n)_\sharp \P' = \mu_n$, $X_\sharp \P'= \mu$ and $X_n \to X$ $\P'$-a.e.~in $\Omega'$. Then
\[
    \int_{\X^2} \varphi \de \Gg[\mu_n] = \int_{\Omega'} \varphi(X_n, b(X_n, \mu_n)) \de \P' \to  \int_{\Omega'} \varphi(X, b(X, \mu)) \de \P' = \int_{\X^2} \varphi \de \Gg[\mu],
\]
where we have used the dominated convergence theorem and that $b(X_n, \mu_n) \to b(X, \mu)$ $\P'$-a.e.~in $\Omega'$ in the topology of $\X^s$ (hence also in $\X^w$) due to \ref{contb}. 
\smallskip

\noindent\emph{Claim 2}: $\Gg$ is totally $\lambda$-dissipative; there exists a unique maximal totally $\lambda$-dissipative extension $\hat{\frF}$ of $\Gg$; denoted by $\hat{\frF}^\circ$ the minimal selection of $\hat{\frF}$ (cf.~Theorem \ref{thm:minsel}), we have
\[\hat{\frF}^\circ|_\rmC=\Gg=\bri{\frF}|_\rmC.\]
\emph{Proof of claim 2}: This follows by \cite[Theorem 8.5]{CSS2grande}.
\smallskip

\noindent\emph{Claim 3}: $\bri{\frF} \subset \hat{\frF}$. In particular, $\bri{\frF}$ is totally $\lambda$-dissipative and it has a unique maximal totally $\lambda$-dissipative extension given by $\hat{\frF}$.\\
\emph{Proof of claim 3}:
let $\mu \in \prob_b(\X)$; then we can find $(\mu_n)_n \subset \rmC$ such that the supports of $\mu_n$ are uniformly bounded and $W_2(\mu_n, \mu)\to 0$ as $n \to +\infty$. We can then proceed as in the proof of claim 1 to show that  $\bri{\frF}[\mu_n] \to \bri{\frF}[\mu]$ in $\prob_2^{sw}(\TX)$. Since $\hat{\frF}$ is closed w.r.t.~this convergence by \cite[Proposition 3.16]{CSS2grande}, then $\bri{\frF}[\mu] \in \hat{\frF}[\mu]$. By arbitrariness of $\mu \in \prob_b(\X)$, we conclude. 
\smallskip

\noindent\emph{Claim 4}: Let $\hat{\ff}^\circ\in L^2(\X,\mu;\X)$ be as in Theorem \ref{thm:minsel} for the \PVF $\hat{\frF}$, so that 
$\hat{\frF}^\circ[\mu]= (\ii_\X, \hat{\ff}^\circ[\mu])_\sharp \mu$, for every $\mu\in\dom(\hat{\frF})$. Then, $\hat{\ff}^\circ[\mu]=b(\cdot, \mu)$ for every $\mu \in \prob_b(\X)$.\\
\emph{Proof of claim 4}: let $g: \X \to \X$ be a continuous and bounded function, $\mu \in \prob_b(\X)$ and $\eps\in(0,1)$. Define the measure $\nu_\eps:= (\ii_\X + \eps g)_\sharp \mu \in \prob_b(\X)$ and notice that the supports of $\nu_\eps$ are uniformly bounded w.r.t.~$\eps$ and that $W_2(\nu_\eps, \mu) \to 0$ as $\eps \downarrow 0$. Since both $(\ii_\X, \hat{\ff}^\circ[\mu])_\sharp \mu= \hat{\frF}^\circ[\mu]$ and $(\ii_\X, b(\cdot, \mu))_\sharp \mu = \bri{\frF}[\mu]$ belong to $\hat{\frF}$, we can apply the total $\lambda$-dissipativity of $\hat{\frF}$ (cf.~Definition \ref{def:total-dissipativity}) along the plan $\ggamma :=(\ii_\X, \ii_\X + \eps g)_\sharp \mu \in \Gamma(\mu, \nu_\eps)$ to get
\[ - \int_{\X} \langle \hat{\ff}^\circ[\mu](x)- b(x+\eps g(x), \nu_\eps), g(x) \rangle \de \mu(x) \le \eps \lambda \int |
g(x)|^2 \de \mu.\]
Using the continuity assumption \ref{contb} on $b$ and the local boundedness \ref{itemF2} of $\frF$, we can pass to the limit as $\eps \downarrow 0$ and get
\[ \int_\X \langle \hat{\ff}^\circ[\mu](x)- b(x, \mu), g(x) \rangle \de \mu(x) \ge 0.\]
Being $g$ arbitrary, we must have $\hat{\ff}^\circ[\mu]=b(\cdot, \mu)$, as wanted.
\smallskip

\noindent\emph{Claim 5}: we have ${X_\tau}_\sharp\P=\eeta_\tau$, 
where $\eeta_\tau$ is defined as in Definition \ref{def:intEE} with $(M_\tau^n)_n$ generated by the Explicit Euler scheme for $\frF$.\\
\emph{Proof of claim 5}:
we argue precisely as in the proof of Proposition \ref{prop:SDFeta}. Indeed, by construction we have that $(X_\tau^n, X_\tau^{n+1})_\sharp \P = T_\tau^{n}$ and $(X_\tau^n)_\sharp \P = M_\tau^n$ for every $\tau$ and every $n$, thus showing the first claim in the proof of Proposition \ref{prop:SDFeta}. The third claim in the same proof can be achieved again by induction. Indeed, we observe that the base case $\aalpha_\tau^1=T_\tau^0 = (X^0_\tau, X^1_\tau)_\sharp\P$ has been proven above; while, the induction step can be performed noting that the Markov property of $(X^n_\tau)_n$ implies that the law of $X_\tau^{n+1}$ given $(X_\tau^0, \cdots, X^n_\tau)$ coincides with the law of $X_\tau^{n+1}$ given $X_\tau^n$. Equivalently, we can write
\[ (X_\tau^0, \dots, X_\tau^{n+1})_\sharp \P = \int T^n_{\tau, x_n} \de [(X_\tau^0, \dots, X_\tau^n)_\sharp \P] (x_0, \dots, x_n) = \int T^n_{\tau, x_n} \de \aalpha_\tau^n (x_0, \dots, x_n) = \aalpha_\tau^{n+1}, \]
where we have used the induction hypothesis $(X_\tau^0, \dots, X_\tau^n)_\sharp \P = \aalpha_\tau^n$ and the definition of $\aalpha_\tau^{n+1}$.
\smallskip

\noindent\emph{Claim 6}: $X_\tau$ converges in $L^2(\Omega, \cB, \P; \rmC([0,T]; \X))$ as $\tau \downarrow 0$ to the unique solution of the deterministic ODE in \eqref{eq:ODE6.3}.\\
\emph{Proof of claim 6}:
by the previous claims and Lemma \ref{le:ee}, we can apply Theorem \ref{prop:young} and obtain that $X_\tau$ converges in $L^2(\Omega, \cB, \P; \rmC([0,T]; \X))$ as $\tau \downarrow 0$ to $Z:={\mathrm s}_{\bar\mu} \circ \bar X$, where ${\mathrm s}_{\bar\mu}$ is as in \eqref{eq:s} for the maximal totally $\lambda$-dissipative extension $\hat{\frF}$ of $\bri{\frF}$. This means that $Z$ is the unique solution of
\begin{equation}\label{eq:thelim}
\dot{X}_t = \hat{\ff}^\circ(X_t, (X_t)_\sharp \P), \quad X_{t=0} = \bar{X},    
\end{equation}
where $\hat{\ff}^\circ$ is as in Theorem \ref{thm:minsel} for $\hat{\frF}$. However, by \cite[Theorem 4.2(3)]{CSS2grande} the support of $(Z_t)_\sharp \P$ stays bounded, so that in \eqref{eq:thelim} we can replace $\hat{\ff}^\circ$ with $b$, since, by claim 4, we proved that $\hat{\ff}^\circ[\mu]= b(\cdot, \mu)$ for every $\mu \in \prob_b(\X)$.
\end{proof}

\subsection{ Fully stochastic interaction field}\label{ex:stocint}
We make a variation to the example in Section \ref{ex:interaction} by introducing stochasticity in the interaction field. Let $f: \X \times \X \to \X$ be a continuous map satisfying the dissipativity condition in \eqref{eq:dissiptildef} and consider again the ODE driven by its induced interaction field $b_f$ as in \eqref{eq:fbari}: given a standard Borel probability space $(\Omega, \cB, \P)$ and $\bar X \in L^2(\Omega, \cB, \P; \X)$, we consider the deterministic ODE
\begin{equation}
    \dot{X}_t=b_f(X_t,{X_t}_\sharp\P),\quad t \in [0,T], \quad X_{t=0}=\bar X.
\end{equation}
We consider a particular case of the vector field in Section \ref{ex:interaction}, assuming that $f$ arises as a stochastic superposition, that is, there exists a Borel vector field $h:\X\times \X \times \U\to\X$, where $(\U,\UU)$ is a probability space endowed with a non-atomic probability measure, such that
\[f(x,y)=\int_\U h(x,y, u)\,\d\UU(u), \quad x,y \in \X.\]
We require that there exists $L>0$ such that
\begin{equation}\label{eq:Lhstoch}
\int_\U |h(x,y,u)|^2\,\d\UU(u) \le L(1+|x|^2+|y|^2) \quad \text{ for every } x,y \in \X.
\end{equation}

We state the analogue of Definition \ref{def:IDF} taking into account the dependence of $h$ on $u$.

\begin{definition}[Stochastic IDF]\label{def:stocIDF} Let $(\Omega, \cB, \P)$ be a standard Borel probability space and $T>0$. Define $J:=\left\{\frac{T}{N}\,:\,N\in\N\setminus\{0\}\right\}$. We say that a family of maps $X_\tau:\Omega\to \rmC([0,T];\X)$, $\tau \in J$, is a \emph{Fully Stochastic Interaction Dissipative Flow (fully stochastic \IDF)} for $h$ if there exist random variables $(X^n_\tau)_{0 \le n \le N, \tau \in J}, (Y^{n}_\tau)_{0 \le n \le N, \tau \in J} \subset L^2(\Omega; \X)$, and $V^k : \Omega \to \U$, $k \in \N$ such that
\begin{itemize}
    \item $Y^{n}_\tau$, $X^{0}_\tau$, $(Y^{m}_\tau)_{m<n}$, and $(V^k)_k$ are independent for every $0\le n\le N$, $\tau \in J$,
    \item $(X^n_\tau)_\sharp \P = (Y^{n}_\tau)_\sharp \P$ for every $0\le n\le N$, $\tau \in J$,
    \item $(V^k)_\sharp \P= \UU$ for every $k \in \N$,
    \item $X^{n+1}_\tau=X^n_\tau+\tau\, h(X^n_\tau,Y^{n}_\tau, V^n)$ for every $0\le n\le N-1$, $\tau \in J$,
\end{itemize}
and 
\begin{equation}\label{eq:stocIDFjoint}
X_\tau=G_\sharp\left(X^0_\tau,X^1_\tau,\dots,\right) \quad \text{ for every } \tau \in J.
\end{equation}
\end{definition}

We have the following result.
\begin{corollary}\label{cor:intfieldlast} In the setting of Definition \ref{def:stocIDF}, assume that $(X^0_\tau)_{\tau\in J}$ converges $\P$-a.s.~to some $\bar X \in L^2(\Omega, \cB, \P; \X)$ and that $W_2(({X^0_\tau})_\sharp\P,({\bar X})_\sharp\P)\to 0$ as $\tau\downarrow 0$.
Then $X_\tau$ converges in $L^2(\Omega, \cB, \P; \rmC([0,T]; \X))$ as $\tau \downarrow 0$ to the unique solution of the deterministic ODE
\[\dot{X}_t=b_f(X_t,{X_t}_\sharp\P),\quad t \in [0,T],\quad X_{t=0}=\bar X.\]

\end{corollary}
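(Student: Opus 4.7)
The plan is to mirror the proof of Corollary \ref{cor:intfield}, adapting the \PVF so that it incorporates both the interaction variable $y$ and the stochastic parameter $u$. First I would set
\[ \frF[\mu] := (\pi^0, h)_\sharp(\mu \otimes \mu \otimes \UU), \quad \mu \in \prob_2(\X), \]
where $\pi^0: \X \times \X \times \U \to \X$ is the projection on the first factor. The disintegration of $\frF[\mu]$ with respect to its spatial marginal $\mu$ is $\frF_x[\mu] = h(x, \cdot, \cdot)_\sharp(\mu \otimes \UU)$, so by Fubini
\[ \bry{\frF[\mu]}(x) = \int_\X \int_\U h(x,y,u)\, \d\UU(u)\, \d\mu(y) = \int_\X f(x,y)\, \d\mu(y) = b_f(x,\mu), \]
and consequently $\bri\frF[\mu] = (\ii_\X, b_f(\cdot, \mu))_\sharp\mu$ coincides with the barycentric \PVF treated in Section \ref{ex:interaction}.

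This identification lets me inherit the structural properties already established there: condition \eqref{eq:dissiptildef} on $\tilde f$ yields \eqref{eq:dissipInterf}, hence $\bri\frF$ is totally $\lambda$-dissipative, and Proposition \ref{prop:dissbari} then gives unconditional $\lambda$-dissipativity of $\frF$. By Jensen's inequality the growth bound \eqref{eq:Lhstoch} implies the pointwise bound \eqref{eq:IntL2} on $f$, so Lemma \ref{le:skoro} applies and, via \cite[Theorem 3.24]{CSS2grande}, guarantees the maximality of $\bri\frF$ together with the identification of its minimal selection with $b_f$. The solvability of the Explicit Euler scheme for $\frF$ follows from \eqref{eq:Lhstoch} and \cite[Lemma 5.13]{CSS}.

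The genuinely new step is an analogue of Proposition \ref{prop:SDFeta} identifying $(X_\tau)_\sharp \P$ with the measure $\eeta_\tau$ produced by Definition \ref{def:intEE} for $\frF$ and the initial datum $\mu_{0,\tau} := (X^0_\tau)_\sharp \P$. I would argue by induction on $n$, following Claims 1--3 of the proof of Proposition \ref{prop:SDFeta}. The independence assumed in Definition \ref{def:stocIDF} ensures that at each step $X^n_\tau$, $Y^n_\tau$ and $V^n$ are mutually independent, because $X^n_\tau$ is a Borel function of $X^0_\tau, Y^0_\tau, \dots, Y^{n-1}_\tau, V^0, \dots, V^{n-1}$, while $Y^n_\tau$ and $V^n$ are listed as independent of this collection. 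Combined with $(Y^n_\tau)_\sharp \P = M^n_\tau := (X^n_\tau)_\sharp \P$, this yields
\[ (X^n_\tau, Y^n_\tau, V^n)_\sharp \P = M^n_\tau \otimes M^n_\tau \otimes \UU, \]
and pushing forward by $(\pi^0, h)$ shows that the joint law of $(X^n_\tau, h(X^n_\tau, Y^n_\tau, V^n))$ is exactly $\frF[M^n_\tau]$, i.e.~$(X^n_\tau, X^{n+1}_\tau)_\sharp \P$ equals the single-step plan $T^n_\tau$ of Definition \ref{def:step-plans}. The multi-step identification $\aalpha_\tau = (X^0_\tau, \dots, X^N_\tau)_\sharp \P$ then follows verbatim as in Claim 3 of Proposition \ref{prop:SDFeta}, using the Markov property of $(X^n_\tau)_n$ which is a consequence of the same independence structure.

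With this identification in hand, Theorem \ref{prop:young} applied to $\bri\frF$ delivers convergence of $X_\tau$ in $L^2(\Omega, \cB, \P; \rmC([0,T]; \X))$ to $\mathrm{s}_{\bar\mu} \circ \bar X$, which by the identification of the minimal selection of the maximal totally $\lambda$-dissipative extension of $\bri\frF$ with $b_f$ is precisely the unique solution of the limiting deterministic ODE driven by $b_f$. The main obstacle I anticipate is purely bookkeeping: disentangling the three-fold independence of $(X^n_\tau, Y^n_\tau, V^n)$ and propagating the identification of the laws through the induction. No genuinely new analytical idea beyond the machinery of Sections \ref{ex:sgd1}--\ref{ex:interaction} should be required.
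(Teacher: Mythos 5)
Your proposal is correct and follows essentially the same route the paper takes: you define the same \PVF $\frF[\mu]=(\pi^0,h)_\sharp(\mu\otimes\mu\otimes\UU)$, compute its barycenter as $\bri\frF[\mu]=(\ii_\X,b_f(\cdot,\mu))_\sharp\mu$, inherit total $\lambda$-dissipativity from \eqref{eq:dissiptildef}, solvability from \eqref{eq:Lhstoch} and \cite[Lemma~5.13]{CSS}, maximality/minimal-selection from Lemma~\ref{le:skoro} and \cite[Theorem~3.24]{CSS2grande}, and then prove the analogue of Proposition~\ref{prop:SDFeta} before invoking Theorem~\ref{prop:young}. The paper leaves the Proposition-\ref{prop:SDFeta}-analogue implicit ("We obtain the same result as in Proposition~\ref{prop:SDFeta}"); your spelling out of the three-fold independence yielding $(X^n_\tau,Y^n_\tau,V^n)_\sharp\P=M^n_\tau\otimes M^n_\tau\otimes\UU$ and the resulting Markov structure is exactly the intended argument and is correct.
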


As for the other examples of Section \ref{sec:examples}, the proof of the above result is based on the following construction.

We define the \PVF $\frF\subset\prob_2(\TX)$ by
\begin{equation}\label{eq:Fintnonl}
\frF[\mu]=(\pi^0,h)_\sharp(\mu\otimes\mu\otimes\UU),\quad\mu\in\prob_2(\X),
\end{equation}
whose barycenter is given by
\[\bri{\frF}[\mu]=\left(\ii_\X,\,b_f(\cdot,\mu)\right)_\sharp\mu.\]

Condition \eqref{eq:dissiptildef} ensures the total $\lambda$-dissipativity of $\bri{\frF}$, while \eqref{eq:Lhstoch} gives the solvability of the Explicit Euler scheme for $\frF$.

We obtain the same result as in Proposition \ref{prop:SDFeta} for the \PVF $\frF$ defined in \eqref{eq:Fintnonl} and with $X_\tau$ defined as in Definition \ref{def:stocIDF}, hence Corollary \ref{cor:intfieldlast}.

\appendix

\section{Technical results}\label{sec:appA}
The following Propositions, of independent interest, are used to prove the strong convergence result in Proposition \ref{prop:tight}. This is determinant to get the main result of the paper in Theorem \ref{thm:main}.

\begin{definition}
Let $(X, d)$ be a metric space and $\mathcal{K} \subset \prob_p(X)$, $p\in[1,+\infty)$. We say that $\mathcal{K}$ has \emph{uniformly integrable $p$-moments} if
\begin{equation}\label{eq:upi}
    \lim_{k \to + \infty} \sup_{\mu \in \mathcal{K}} \int_{\{ x \,:\, d(x,x_0) \ge k\}} d^p(x,x_0) \de \mu(x) =0 \quad \text{ for some (hence for any) } x_0 \in X.
\end{equation}
\end{definition}

\begin{proposition}\label{prop:poussin1} Let $(X, d)$ be a metric space and let $\mathcal{K} \subset \prob_p(X)$, $p\in[1,+\infty)$. 
Assume that there exist $x_0 \in X$ and a Borel measurable function $\varphi:[0,+\infty) \to [0,+\infty)$ such that 
\begin{equation*}
\lim_{r \to + \infty} \frac{\varphi(r)}{r} = + \infty, \quad \sup_{\mu \in \mathcal{K}} \int_X \varphi(d^p(x,x_0)) \de \mu(x) < + \infty.
\end{equation*}
Then $\mathcal{K}$ has uniformly integrable $p$-moments.
\end{proposition}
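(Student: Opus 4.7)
The plan is to use the superlinearity of $\varphi$ to control the tails of $d^p(\cdot,x_0)$ uniformly in $\mu \in \mathcal{K}$, in the spirit of the classical de la Vallée Poussin criterion. Set
$M := \sup_{\mu \in \mathcal{K}} \int_X \varphi(d^p(x,x_0)) \de \mu(x) < +\infty$ and, for each $k \ge 0$, let
$A_k := \{x \in X \mid d(x,x_0) \ge k\}$. I want to show that $\sup_{\mu \in \mathcal{K}} \int_{A_k} d^p(x,x_0)\de\mu(x) \to 0$ as $k \to +\infty$.

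First, I would fix an arbitrary $C>0$. The hypothesis $\lim_{r \to +\infty} \varphi(r)/r = +\infty$ gives the existence of some $R_C>0$ such that $\varphi(r) \ge C\, r$ for every $r \ge R_C$. Choosing $k$ so large that $k^p \ge R_C$, on the set $A_k$ we have $d^p(x,x_0) \ge k^p \ge R_C$, hence
$$d^p(x,x_0) \le \frac{1}{C}\,\varphi(d^p(x,x_0)) \quad \text{on } A_k.$$
Integrating against any $\mu \in \mathcal{K}$ and using the nonnegativity of $\varphi$ we obtain
$$\int_{A_k} d^p(x,x_0) \de \mu(x) \le \frac{1}{C} \int_{A_k} \varphi(d^p(x,x_0)) \de \mu(x) \le \frac{M}{C}.$$

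Taking the supremum over $\mu \in \mathcal{K}$ and then letting first $k \to +\infty$ (which is automatic, since the bound $M/C$ is independent of $k$ once $k^p \ge R_C$) and then $C \to +\infty$, we conclude
$$\limsup_{k \to +\infty} \sup_{\mu \in \mathcal{K}} \int_{A_k} d^p(x,x_0) \de \mu(x) \le \frac{M}{C} \xrightarrow[C \to +\infty]{} 0,$$
which is exactly \eqref{eq:upi}. Since the claim is independent of the base point (by the triangle inequality and a standard comparison between $d^p(\cdot,x_0)$ and $d^p(\cdot,x_0')$, each dominated by $2^{p-1}(d^p(\cdot,x_0')+d^p(x_0,x_0'))$), the ``for some (hence for any)'' clause follows as well.

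There is essentially no obstacle here: the argument is the elementary half of de la Vallée Poussin's theorem, and Borel measurability of $\varphi$ is only needed so that $\varphi \circ d^p(\cdot,x_0)$ is a well-defined integrand. The only minor care is to keep $k$ and $C$ quantified in the right order, but the uniform bound $M/C$ does not involve $k$, so first $k \to \infty$ and then $C \to \infty$ works cleanly.
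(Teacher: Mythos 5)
Your proof is correct and is essentially identical to the paper's argument: the paper fixes $\eps>0$ with $\eps\,\varphi(r)>r$ for large $r$ and bounds the tail by $\eps\cdot M$, which is exactly your $C=1/\eps$ version of the elementary half of de la Vallée Poussin. The remark on base-point independence is a harmless addition not spelled out in the paper's proof.
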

\begin{proof}
    Let $\eps>0$ and take $R_\eps>0$ such that $\eps\,\varphi(r)>r$ for every $r>R_\eps$. Whenever $k \ge R_{\eps}^{1/p}$, then for any $\mu \in \mathcal{K}$ we have
\begin{align*}
\int_{\{ x \,:\, d(x,x_0) \ge k\}} d^p(x,x_0) \de \mu(x) \le \eps \int_{\{ x\, :\, d(x,x_0) \ge k\}} \varphi(d^p(x,x_0)) \de \mu(x) \le \eps \sup_{\mu \in \mathcal{K}} \int_X \varphi(d^p(x,x_0)) \de \mu(x).
\end{align*}
Passing to the $\sup$ among $\mu \in \mathcal{K}$, to the limit as $k \to + \infty$ and finally to the limit as $\eps \downarrow 0$ proves the sought uniform integrability of the $p$-moments.
\end{proof}

\begin{proposition}\label{prop:poussin2} Let $(X, d)$ be a metric space and let $\mathcal{K} \subset \prob_p(X)$, $p\in[1,+\infty)$. Assume that $\mathcal{K}$ has uniformly integrable $p$-moments. Then, for any $x_0 \in X$, there exists $\varphi \in \rmC^\infty([0,+\infty); [0,+\infty))$ increasing and convex such that 
\begin{equation}\label{eq:thef}
\varphi(0)=0,\quad \lim_{r \to + \infty} \frac{\varphi(r)}{r} = + \infty, \quad \sup_{\mu \in \mathcal{K}} \int_X \varphi(d^p(x,x_0)) \de \mu(x) < + \infty.
\end{equation}
\end{proposition}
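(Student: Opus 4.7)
The plan is to produce $\varphi$ in two stages: first construct a piecewise-linear convex, non-decreasing function enjoying the three properties in \eqref{eq:thef}, then smooth it by a one-sided mollification chosen so that all three properties are preserved.

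For the first stage, I would exploit the uniform integrability hypothesis by setting $a_k := \sup_{\mu \in \mathcal{K}} \int_{\{d(x,x_0)^p \ge k\}} d(x,x_0)^p \,\d\mu(x)$, which tends to $0$ as $k \to \infty$. Extracting a strictly increasing sequence of integers $(n_k)_{k\ge 1}$ with $a_{n_k}\le 2^{-k}$, I would define
\[
\varphi_0(r) := \sum_{k=1}^\infty (r-n_k)_+, \qquad r\in\R,
\]
extended as usual by $0$ on $(-\infty,0)$. For every fixed $r$ only finitely many summands contribute, so $\varphi_0$ is a well-defined non-negative, convex, non-decreasing function with $\varphi_0(0)=0$. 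For any $K\in\N$ and $r\ge 2n_K$ one has $\varphi_0(r)\ge\sum_{k\le K}(r-n_k)\ge Kr/2$, giving the superlinear growth. Finally, the elementary bound $(t-n_k)_+\le t\,\mathbb{1}_{\{t\ge n_k\}}$ and monotone convergence yield
\[
\int_X \varphi_0(d^p(x,x_0))\,\d\mu(x)
=\sum_{k=1}^\infty \int (d^p-n_k)_+\,\d\mu
\le \sum_{k=1}^\infty a_{n_k}\le 1
\]
uniformly in $\mu\in\mathcal{K}$.

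For the second stage, I would fix a non-negative $\rho\in\rmC_c^\infty(\R)$ with $\supp(\rho)\subset[0,1]$ and $\int\rho=1$, and set $\tilde\varphi:=\varphi_0\ast\rho$. Since $\varphi_0$ is convex and non-decreasing on all of $\R$, the convolution $\tilde\varphi$ is $\rmC^\infty$, convex and non-decreasing. The pointwise inequality $\varphi_0(r-s)\le \varphi_0(r)$ for $s\in[0,1]$ gives $\tilde\varphi\le\varphi_0$, so the integral bound is inherited; on the other hand, $\varphi_0(r-s)\ge\varphi_0(r-1)$ for $s\in[0,1]$ gives $\tilde\varphi(r)\ge \varphi_0(r-1)$ for $r\ge 1$, transferring superlinearity to $\tilde\varphi$. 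I would then define $\varphi(r):=\tilde\varphi(r)-\tilde\varphi(0)$, which kills the value at the origin while preserving convexity, monotonicity, smoothness, the superlinear growth and the bound $\int\varphi(d^p)\,\d\mu\le 1$ on $\mathcal{K}$. (If a strict-monotonicity reading of ``increasing'' is intended, one can further replace $\varphi$ by $\varphi(r)+\varepsilon r$ for any $\varepsilon>0$: this preserves all three properties thanks to uniform boundedness of the $p$-moments of $\mathcal{K}$, which follows from the hypothesis.)

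I do not expect any serious obstacle here; this is a de la Vallée-Poussin-type construction and the only delicate point is ensuring that the three properties, in particular $\varphi(0)=0$ and the uniform integral bound, survive the mollification. Both are handled by choosing a \emph{one-sided} mollifier supported in $[0,1]$ (so that monotonicity of $\varphi_0$ forces $\tilde\varphi\le\varphi_0$) and by subtracting the constant $\tilde\varphi(0)$ at the end.
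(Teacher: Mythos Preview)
Your proposal is correct and follows essentially the same de la Vall\'ee-Poussin strategy as the paper: build a piecewise-linear convex function with superlinear growth and uniformly bounded integrals, then mollify with a one-sided kernel supported in $[0,1]$. Your choice $\varphi_0(r)=\sum_k (r-n_k)_+$ is in fact the same kind of object as the paper's $\tilde\varphi(r)=\int_0^r g(s)\,\d s$ with $g$ a step function (both have piecewise-constant integer slopes), and your verification of the integral bound is somewhat more direct; likewise, your observation that convolving a non-decreasing function with a non-negative kernel preserves monotonicity replaces the paper's explicit computation of $\varphi'$ via the symmetry of the bump. Note also that with your one-sided mollifier one actually has $\tilde\varphi(0)=\int_0^1\varphi_0(-s)\rho(s)\,\d s=0$, so the final subtraction of $\tilde\varphi(0)$ is harmless but unnecessary.
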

\begin{proof}
    The proof is a simple adaptation of the analogous statement for the uniform integrability of a family of functions in $L^1$, see for example \cite[Theorem 4.5.9]{Bogachev}, combined with a regularization argument.\\
Fix $x_0\in X$, by uniform integrability of the $p$-moments, we can find an increasing sequence of natural numbers $C_n \uparrow + \infty$ such that 
\begin{equation}\label{eq:2n}
    \int_{\{ x \,:\, d(x,x_0) \ge C_n^{1/p}\}} d^p(x,x_0) \de \mu(x) \le 2^{-n} \quad \text{ for every } n \in \N, \, \mu \in \mathcal{K}.
\end{equation}
Let us define, for any $k \in \N$ and $\mu \in \mathcal{K}$ the real numbers
\[ \mu_k:= \mu \left ( \left \{ x \in X\, :\, d^p(x,x_0) > k \right \} \right ).\]
We set
\[ \alpha_n:=0 \text{ if } n < C_1 \text{ and } \alpha_n := \max\{ k \in \N\,:\, k \le C_n\} \text{ if } n \ge C_1.\]
We finally define
\[ \tilde{\varphi}(r):= \int_0^r g(s) \de s, \quad g(r):=\sum_{n=0}^{+\infty} \alpha_n\,\nchi_{(n,n+1]}(r), \quad r \ge 0. \]
Clearly $\tilde{\varphi}$ is non-negative, increasing, convex and satisfies $\lim_{r \to + \infty} \tilde{\varphi}(r)/r = + \infty$. Notice also that $\tilde\varphi(r)=0$ for any $r\in[0,1]$. We show that $\tilde\varphi$ also satisfies the last condition in \eqref{eq:thef}. First of all we observe that, for any $n \ge 1$ and any $\mu \in \mathcal{K}$, we have
\begin{align*}
    \int_{\{ x\, :\, d(x,x_0) \ge C_n^{1/p}\}} d^p(x,x_0) \de \mu(x) &= \int_{\{ x \,:\, d^p(x,x_0) \ge C_n\}} d^p(x,x_0) \de \mu(x)\\
    &\ge \sum_{j=C_n}^{+\infty} j \,\mu \left ( \left \{ x \in X \,:\, j <d^p(x,x_0) \le j+1 \right \} \right ) \\
    & \ge \sum_{j=C_n}^{+\infty} (j-C_n+1)\, \mu \left ( \left \{ x \in X \,:\, j <d^p(x,x_0) \le j+1 \right \} \right ) \\
    & = \sum_{k=C_n}^{+\infty} \mu_k.
\end{align*}
We deduce by \eqref{eq:2n}
\begin{equation}
    \sum_{n=1}^{+\infty} \sum_{k=C_n}^{+\infty} \mu_k \le 1 \quad \text{ for every } \mu \in \mathcal{K}.
\end{equation}
Moreover
\begin{align*}
    \int_X \tilde{\varphi}(d^p(x,x_0)) \de \mu(x) &= \int_X \int_0^{d^p(x,x_0)} g(s) \de s \de \mu(x) \\
    &= \sum_{n=0}^{+\infty} \int_{\{ x\,:\, n < d^p(x,x_0) \le n+1\}}  \int_0^{d^p(x,x_0)} g(s) \de s \de \mu(x)\\
    & = \sum_{n=0}^{+\infty} \alpha_n \mu \left ( \left \{ x \in X \,:\, n <d^p(x,x_0) \le n+1 \right \} \right )\\
    & =\sum_{n=1}^{+\infty} \alpha_n \mu \left ( \left \{ x \in X \,:\, n <d^p(x,x_0) \le n+1 \right \} \right )\\
    & \le \sum_{n=1}^{+\infty} \alpha_n \mu_n = \sum_{n=1}^{+\infty} \sum_{k=C_n}^{+\infty} \mu_k \le 1.
\end{align*}
Finally, we define $\varrho \in \rmC_c^\infty(\R)$ as
\[ \varrho(t) := \begin{cases} c_0\exp \left ( \frac{1}{|2x-1|^2-1}\right ) \quad & \text{ if } |2x-1|<1, \\
0 \quad & \text{ if } |2x-1| \ge 1,\end{cases}\]
where $c_0>0$ is the positive constant such that $\int_\R \varrho(x) \de x =1$. Observe that $\supp(\varrho)=[0,1]$, $\varrho(1-t)=\varrho(t)$ for every $t \in \R$ and $\varrho$ is increasing in $[0,1/2]$. We define $\varphi := \varrho \ast \tilde{\varphi}$, where we have extended $\tilde{\varphi}$ to $\R$ by continuity. Clearly (the restriction to $[0,+\infty)$ of ) $\varphi$ is  smooth, convex (since $\tilde{\varphi}$ is convex) and satisfies
\[ \varphi(0)=0, \quad \tilde{\varphi}(t-1) \le \varphi(t) \le \tilde{\varphi}(t) \quad \text{ for every } t \in \R\]
so that \eqref{eq:thef} holds for $\varphi$. It remains to show that $\varphi$ is increasing: we compute
\begin{align*}
    \varphi'(t) &= (\varrho' \ast \tilde{\varphi})(t) = \int_{1/2}^1 \varrho'(r) \tilde{\varphi}(t-r) \de r - \int_0^{1/2} \varrho'(1-r) \tilde{\varphi}(t-r) \de r \\
    &= \int_{1/2}^1 \varrho'(s) \left ( \tilde\varphi(t-s) - \tilde{\varphi}(t-1+s) \right ) \de s \ge 0,
\end{align*}
where the last inequality follows by the fact that $\tilde{\varphi}$ is increasing and that $\varrho$ is decreasing in $[1/2,1]$.
\end{proof}

Applying Propositions \ref{prop:poussin1}, \ref{prop:poussin2}, we provide sufficient conditions to have strong compactness of a set of probability measures over continuous paths.
\begin{proposition}\label{prop:compactness}
    Let $(X, d)$ be a complete and separable metric space, $p\in(1,+\infty)$, $T>0$, and $\mathcal{A}_p: \rmC([0,T]; (X, d)) \to [0,+\infty]$ be the $p$-action functional defined as
\begin{equation}\label{eq:A2}
    \mathcal{A}_p(\gamma) := \begin{cases} \displaystyle\int_0^T |\dot{\gamma}_t|_d^p \de t \quad & \text{ if } \gamma \in \mathrm{AC}^p([0,T]; (X, d)), \\
    + \infty \quad & \text{ else}, \end{cases}
\end{equation}
where $|\dot{\gamma}_t|_d$ is the metric derivative of $\gamma$ at time $t$.
    Let $\mathcal{K} \subset \prob_p(\rmC([0,T]; (X,d))$ be such that 
    \begin{enumerate}
        \item $\displaystyle A:=\sup_{\eeta \in \mathcal{K}}\int \mathcal{A}_p \de \eeta < + \infty$;
        \item $\displaystyle B:=\sup_{\eeta \in \mathcal{K}} \{d(x,x_0) : x \in \supp((\sfe_0)_\sharp \eeta) \} <+\infty$ for some (hence for any) $x_0 \in X$;
        \item $\left\{((\sfe_t)_\sharp\eeta)_{t \in [0,T]}\right\}_{\eeta \in \mathcal{K}} $ is relatively compact in $\rmC([0,T]; \prob_p(X))$.
    \end{enumerate}
Then $\mathcal{K}$ is relatively compact in $\prob_p(\rmC([0,T]; (X,d)))$.
\end{proposition}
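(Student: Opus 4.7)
\medskip
\noindent\textbf{Proof proposal.} My plan is to obtain relative compactness in $\prob_p(\rmC([0,T];(X,d)))$ by establishing separately tightness in $\prob(\rmC([0,T];X))$ and uniform integrability of the $p$-th moments $\gamma \mapsto M(\gamma)^p$, with $M(\gamma) := \sup_{t\in[0,T]} d(\gamma(t),x_0)$. Tightness I would obtain from a classical Arzelà-Ascoli argument: by condition (1) and Markov's inequality $\eeta(\{\mathcal{A}_p > N\}) \le A/N$, and on the complement the Hölder bound $d(\gamma(t),\gamma(s)) \le |t-s|^{1/q} \mathcal{A}_p(\gamma)^{1/p}$ (with $q := p/(p-1)$) gives equicontinuity with modulus $N^{1/p}$; condition (3) implies that $\{(\sfe_0)_\sharp \eeta\}_\eeta$ is relatively compact in $\prob_p(X)$ and hence tight, so Arzelà-Ascoli produces, for every $\varepsilon > 0$, a compact set $K \subset \rmC([0,T];X)$ with $\eeta(K) \ge 1 - \varepsilon$ uniformly in $\eeta \in \mathcal{K}$.

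The main obstacle is uniform integrability of $M(\gamma)^p$: condition (3) provides control only on the moments of the time marginals $(\sfe_t)_\sharp\eeta$ (which are uniformly integrable pointwise in $t$), whereas one needs to bound $\int \sup_t d(\gamma(t),x_0)^p \de\eeta$, and (1) only gives an $L^1$ bound on $\mathcal{A}_p$, not its uniform integrability. My strategy would be to combine a de la Vallée Poussin construction with a chain-rule-plus-Hölder estimate along trajectories. Since the relatively compact family $\{((\sfe_t)_\sharp\eeta)_{t})\}_\eeta$ in $\rmC([0,T];\prob_p(X))$ has relatively compact range
\[ \mathcal{N} := \{(\sfe_t)_\sharp\eeta \,:\, \eeta \in \mathcal{K},\ t \in [0,T]\} \subset \prob_p(X), \]
Proposition \ref{prop:poussin2} supplies a smooth, convex, increasing, superlinear $\chi$ with $\chi(0)=0$ and
\[ S' := \sup_{\eeta,\,t} \int \chi(d(x,x_0)^p) \de (\sfe_t)_\sharp\eeta(x) < +\infty. \]
I would then define $\varphi(r) := \int_0^r \bigl(\chi(s)/s\bigr)^{1/q} \de s$ (extending the integrand continuously at $0$, where $\chi(s)/s \to \chi'(0)=0$ by the construction of Proposition \ref{prop:poussin2}); convexity of $\chi$ makes $s \mapsto \chi(s)/s$ non-decreasing and unbounded, so $\varphi$ is $C^1$, convex, increasing, superlinear, with $\varphi(0)=0$ and satisfying the crucial identity $\varphi'(r)^q \, r = \chi(r)$.

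For $\eeta$-a.e.\ curve $\gamma$, (1) gives absolute continuity with $\mathcal{A}_p(\gamma) < +\infty$, (2) gives $d(\gamma(0),x_0) \le B$, and $t \mapsto d(\gamma(t),x_0)$ is absolutely continuous with $|\tfrac{\de}{\de t} d(\gamma(t),x_0)| \le |\dot\gamma(t)|_d$; hence the chain rule together with the monotonicity of $\varphi$ yields
\[ \sup_{t \in [0,T]} \varphi(d(\gamma(t),x_0)^p) \le \varphi(B^p) + p \int_0^T \varphi'(d(\gamma(s),x_0)^p) \, d(\gamma(s),x_0)^{p-1} \, |\dot\gamma(s)|_d \de s. \]
Integrating against $\eeta$ and applying Hölder's inequality in $\de s \otimes \de\eeta$ with exponents $p$ and $q$, using $(p-1)q = p$ and $\varphi'(r)^q r = \chi(r)$, then produces
\[ \int \sup_t \varphi(d(\gamma(t),x_0)^p) \de\eeta \le \varphi(B^p) + p \, A^{1/p} \left( \int_0^T \int \chi(d(x,x_0)^p) \de(\sfe_s)_\sharp\eeta(x) \de s \right)^{1/q} \le \varphi(B^p) + p \, A^{1/p} (T S')^{1/q}, \]
uniformly in $\eeta \in \mathcal{K}$. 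Since $\varphi$ is increasing, $\sup_t \varphi(d(\gamma(t),x_0)^p) = \varphi(M(\gamma)^p)$, and the superlinearity of $\varphi$ combined with Proposition \ref{prop:poussin1} yields the desired uniform integrability of $M(\gamma)^p$ under $\mathcal{K}$. Together with the tightness established above, this produces relative compactness of $\mathcal{K}$ in $\prob_p(\rmC([0,T];(X,d)))$.
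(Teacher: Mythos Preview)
Your proposal is essentially the paper's own proof: the same two-step structure (tightness plus uniform $p$-integrability of $\gamma\mapsto d_\infty^p(\gamma,\bar\gamma)$), the same de la Vall\'ee Poussin construction (your $\chi,\varphi$ are the paper's $\varphi,\psi$), and the identical chain-rule/H\"older estimate yielding the bound $\varphi(B^p)+p\,A^{1/p}(TS')^{1/q}$.

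One minor imprecision worth fixing in your tightness sketch: in a general complete separable metric space, equicontinuity together with tightness of $(\sfe_0)_\sharp\eeta$ alone does \emph{not} suffice for Arzel\`a--Ascoli, because an $r$-neighbourhood of a compact set need not be totally bounded (think of the unit ball in $\ell^2$ as the $1$-neighbourhood of $\{0\}$). You should invoke condition~(3) to get tightness of $\{(\sfe_t)_\sharp\eeta:\eeta\in\mathcal{K}\}$ at every $t$ (or at a countable dense set of times) and then run Arzel\`a--Ascoli. The paper avoids spelling this out by citing \cite[Theorem 10.4]{abs21} directly for tightness from conditions (1) and (3).
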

\begin{proof}
    Conditions (1),(3) stated above imply (see e.g.~\cite[Theorem 10.4]{abs21}) that $\mathcal{K}$ is uniformly tight in $\prob(\rmC([0,T]; (X,d)))$ so that it is enough (\cite[Proposition 7.1.5]{ags}) to show that $\mathcal{K}$ has uniformly integrable $p$-moments. Since $\{((\sfe_t)_\sharp\eeta)_{t \in [0,T]}\}_{\eeta \in \mathcal{K}}$ is relatively compact in $\rmC([0,T]; \prob_p(X))$, we deduce that 
    \[ \mathcal{K}_0:= \{ (\sfe_t)_\sharp \eeta : t \in [0,T], \, \eeta \in \mathcal{K}\} \subset \prob_p(X)\]
    is relatively compact in $\prob_p(X)$. By Proposition \ref{prop:poussin2}, we deduce that there exists an increasing, convex function $\varphi\in \rmC^\infty([0,+\infty); [0,+\infty))$, such that 
\[
\varphi(0)=0, \quad \lim_{r \to + \infty} \frac{\varphi(r)}{r} = + \infty, \quad C:=\sup_{\mu \in \mathcal{K}_0} \int_X \varphi(d^p(x,x_0)) \de \mu(x) < + \infty.
\]
Denoting by $q$ the conjugate exponent of $p$, we define $\psi:[0,+\infty) \to [0,+\infty)$ as
\[ \psi(r):= \int_0^r  \left (\frac{\varphi(s)}{s} \right )^{1/q} \de s,\quad r \ge 0,\]
where we extended by continuity $\varphi(s)/s$ to $\varphi'(0)$ at $s=0$.
We note that $\psi$ is increasing, $\psi\in\rmC^1([0,+\infty))$ and $\lim_{r \to + \infty} \psi(r)/r= + \infty$. If we prove that 
\begin{equation}\label{eq:unifeta}
    \sup_{\eeta \in \mathcal{K}} \int \psi(d^p_\infty(\gamma, \bar{\gamma})) \de \eeta(\gamma) < + \infty,
\end{equation}
we can conclude the proof by applying Proposition \ref{prop:poussin1}, where $\bar{\gamma} \equiv x_0$. Let us show \eqref{eq:unifeta}; since $\psi$ is locally Lipschitz continuous, if we take $\gamma\in\mathrm{AC}^p([0,T]; X)$ then the composition $[0,T]\ni t\mapsto\psi(d^p_\infty(\gamma_t, \bar{\gamma}))$ is absolutely continuous. Thus we have
\begin{align*}
    \int \psi(d_\infty^p(\gamma, \bar{\gamma})) \de \eeta(\gamma) &= \int \sup_{t \in [0,T]} \psi(d^p(\gamma_t, x_0)) \de \eeta(\gamma)\\
    &\le \int \left ( \psi(d^p(\gamma_0, x_0)) + p\int_0^T \psi'(d^p(\gamma_t, x_0)) d^{p-1}(\gamma_t, x_0) |\dot{\gamma}_t|_d \de t \right ) \de \eeta(\gamma) \\
    & \le \int_X \psi(d^p(x,x_0)) \de ((\sfe_0)_\sharp \eeta)(x) \\
    & \quad + p \left ( \int \int_0^T |\dot{\gamma}_t|_d^p\de t \de \eeta(\gamma)\right )^{1/p} \left ( \int \int_0^T (\psi'(d^p(\gamma_t,x_0)))^q d^p(\gamma_t, x_0) \de t \de \eeta(\gamma)  \right)^{1/q} \\
    & \le \int_X \psi(d^p(x,x_0)) \de ((\sfe_0)_\sharp \eeta)(x) + \\
    & \quad +p \left (\int \mathcal{A}_p \de \eeta \right )^{1/p} \left ( \int_0^T  \int_X \varphi (d^p(x,x_0)) \de ((\sfe_t)_\sharp \eeta)(x)  \de t\right )^{1/q}  \\
    & \le \psi(B^p) + pA^{1/p}(TC)^{1/q},
\end{align*}
since $\psi$ is increasing.
\end{proof}

\section{Sticky particles representation}\label{sec:sticky}
We state and prove the following result providing conditions to ensure uniqueness and sticky behavior of the probabilistic representation associated to a curve of probability measures. This result, despite being interesting by itself, is used to prove Theorem \ref{thm:red} and has been stated, in a simplified form, in Theorem \ref{thm:sticky1}

\begin{theorem}\label{thm:sticky}
Let $N\ge1$, $a_i>0$ with $\sum_{i=1}^N a_i=1$, $x_i\in\X$ such that $x_i\neq x_j$ for $i\neq j$ and set 
\[\bar\mu:=\sum_{i=1}^N a_i\delta_{x_i}.\]
Let $\mu:[0,+\infty)\to\prob_2(\X)$, with $\mu(0)=\bar\mu$, be such that
\begin{equation}\label{pr-mut}
    \#(\supp(\mu_t)) \text{ is finite and non-increasing w.r.t.~} t\ge 0.
\end{equation}
Assume that $\eeta\in \prob(\rmC([0,+\infty);\X))$ is such that $(\mathsf{e}_t)_\sharp \eeta=\mu_t$ for every $t\ge 0$. Then
\[\eeta=\sum_{i=1}^N a_i\delta_{\gamma_i},\]
for curves $\gamma_i\in\rmC([0,+\infty);\X)$ satisfying
\begin{enumerate}[label=(P\arabic*)]
    \item\label{P1} $\gamma_i\neq\gamma_j$, $i\neq j$;
    \item\label{P2} $\gamma_i(0)=x_i$ and $i=1,\dots,N$;
    \item\label{P3} if there exists $s\ge0$, $i,j\in\{1,\dots,N\}$ with $i\neq j$, such that $\gamma_i(s)=\gamma_j(s)$, then $\gamma_i(t) = \gamma_j(t)$ for every $t \ge s$.
\end{enumerate}
In particular, if $\eeta_1,\eeta_2\in \prob(\rmC([0,+\infty);\X))$ are such that $(\mathsf{e}_t)_\sharp \eeta_1=(\mathsf{e}_t)_\sharp \eeta_2=\mu_t$ for every $t\ge 0$, then $\eeta_1=\eeta_2$.
\end{theorem}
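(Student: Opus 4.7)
The overall plan is to leverage the atomic structure of $\mu_t$. Since $(\mathsf{e}_t)_\sharp \eeta = \mu_t$ with $\eeta$ concentrated on continuous curves, dominated convergence applied to $t \mapsto \int \varphi(\gamma(t))\,\d\eeta(\gamma)$ for $\varphi \in \rmC_b(\X)$ shows $t \mapsto \mu_t$ is narrowly continuous. The count $N(t) := \#\supp(\mu_t)$ is integer-valued, non-increasing, and bounded by $N$, so it has at most $N-1$ jump-down times $0 = t_0 < t_1 < \cdots < t_K$ (with $t_{K+1} = +\infty$), and equals a constant $n_k$ on each interval $[t_k, t_{k+1})$.

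The next step is to label the atoms continuously on each interval. On $[t_k, t_{k+1})$, narrow continuity combined with the fact that $\mu_t$ has exactly $n_k$ distinct atoms allows one to construct continuous functions $y_j^{(k)}: [t_k, t_{k+1}) \to \X$ and $a_j^{(k)}: [t_k, t_{k+1}) \to (0,1]$, for $j = 1, \ldots, n_k$, with pairwise distinct $y_j^{(k)}(t)$ and $\mu_t = \sum_j a_j^{(k)}(t)\,\delta_{y_j^{(k)}(t)}$. If atoms were to collide inside the open interval, the count would strictly drop, contradicting $N(t) \equiv n_k$. Taking left-limits at $t_{k+1}$ produces points $y_j^{(k)}(t_{k+1}-) \in \supp(\mu_{t_{k+1}})$, a set of only $n_{k+1} < n_k$ points; hence several trajectories merge at each jump time, and "follow the merged atom" defines canonical continuations into the next interval. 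At $t=0$, the atoms of $\mu_0$ being the distinct $x_1,\dots,x_N$ forces $n_0 = N$ and identifies $\{y_j^{(0)}(0)\} = \{x_j\}$ after relabeling.

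For the probabilistic step, I pick a countable dense set $D \subset [0,\infty)$ containing all $t_k$ and note that for each $t \in D$, the set $\{\gamma \mid \gamma(t) \in \supp(\mu_t)\}$ has full $\eeta$-measure; intersecting over $D$ gives a full-measure set $G$. For $\gamma \in G$, continuity of $\gamma$ and of the $y_j^{(k)}$, together with their pairwise separation on $[t_k, t_{k+1})$, force $\gamma$ to coincide with a single trajectory $y_j^{(k)}$ throughout that interval (any switch would violate the continuity of $\gamma$ between separated branches). Chaining across jump times through continuity at each $t_{k+1}$ determines $\gamma$ on $[0,\infty)$ uniquely from $\gamma(0) \in \{x_1, \ldots, x_N\}$. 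Defining $\gamma_i$ as the unique resulting curve with $\gamma_i(0) = x_i$ gives \ref{P2}, and \ref{P1} is automatic since the $x_i$ are distinct; the weights $\eeta(\{\gamma_i\}) = a_i$ are read off from $(\mathsf{e}_0)_\sharp \eeta = \mu_0$, establishing the representation $\eeta = \sum_i a_i \delta_{\gamma_i}$ and the uniqueness statement.

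The main obstacle is the careful justification in the second step that the atomic trajectories $y_j^{(k)}$ are well-defined and continuous; this requires a short topological argument using narrow continuity of $\mu_t$ and the constancy of $N(t)$ on $[t_k, t_{k+1})$, showing that atoms at nearby times must be close to atoms at time $t$ (otherwise conservation of mass and weight-positivity would be violated in the narrow limit). Once this is in place, the sticky property \ref{P3} is immediate: if $\gamma_i(s) = \gamma_j(s)$ for some $s$, then $s$ must occur at or past a merge time of the underlying trajectories, and the canonical continuation rule forces $\gamma_i$ and $\gamma_j$ to follow the same atom for all $t \ge s$.
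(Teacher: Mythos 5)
Your proposal is correct in outline, but it takes a genuinely different route from the paper's proof, and it trades elegance for extra machinery. The paper never tracks the atoms of $\mu_t$ as curves: instead it works directly with $\supp(\eeta)$. Using the standard inclusion $\sfe_t(\supp(\eeta))\subseteq\supp(\mu_t)\subseteq\overline{\sfe_t(\supp(\eeta))}$ (from \cite{ags}), the paper observes that $\sfe_t(\supp(\eeta))=\supp(\mu_t)$ exactly, since the right-hand side is finite hence closed. The sticky property for elements of $\supp(\eeta)$ is then obtained by a short contradiction argument: if $\gamma^1,\gamma^2\in\supp(\eeta)$ agree at $\bar s$ but separate later, let $\bar t$ be the last time they agree; at $\bar t$ the support has some cardinality $K$, and one can exhibit $K-1$ other curves in $\supp(\eeta)$ whose evaluations at $\bar t$ are distinct from each other and from $\gamma^1(\bar t)=\gamma^2(\bar t)$; by continuity all these separations persist on a short interval to the right of $\bar t$, on which however $\gamma^1$ and $\gamma^2$ have also separated, giving $K+1$ points in $\supp(\mu_{t})$ and contradicting the monotonicity of $\#\supp(\mu_t)$. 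Once stickiness holds, $\#\supp(\eeta)=N$ and the representation follows at once.

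By contrast, you first build continuous atom trajectories $y_j^{(k)}$ on each interval $[t_k,t_{k+1})$ of constancy of $\#\supp(\mu_t)$, chain them across jump times, and then show that a full-$\eeta$-measure set of curves is forced to follow a single branch. This is logically sound, but two remarks are in order. First, the step you yourself flag as the ``main obstacle'' — that narrow continuity of $t\mapsto\mu_t$ plus constancy of the atom count forces continuity of atom positions — is a real lemma that needs to be spelled out carefully: it relies on the observation that, for any disjoint small balls around the $n_k$ atoms of $\mu_{t_0}$, narrow lower semicontinuity gives positive $\mu_t$-mass in each ball for $t$ near $t_0$, and the mass outside their union tends to $0$, forcing exactly one atom per ball. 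In the paper's proof this lemma is not needed at all. Second, the claim that left-limits of the $y_j^{(k)}$ at $t_{k+1}$ exist and lie in $\supp(\mu_{t_{k+1}})$ is \emph{not} a consequence of narrow continuity alone (the limiting atom weight could in principle vanish while the position oscillates); it is only recovered after you show that every $\gamma\in\supp(\eeta)$ follows one branch, so the order of your steps needs mild rearranging to avoid an apparent circularity. These points are fixable, but they illustrate what the paper's approach buys: working on $\supp(\eeta)$ directly bypasses the entire atom-tracking construction and delivers stickiness in a few lines.
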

\begin{proof}
    We divide the proof into five claims.\\
    \textit{Claim 1}: we have $\mathsf{e}_t(\supp(\eeta)) = \supp(\mu_t)$.\\
    \textit{Proof of claim 1}: by e.g.\cite[Formula (5.2.6)]{ags}, we have that 
    \[ \mathsf{e}_t(\supp(\eeta)) \subseteq \supp(\mu_t) \subseteq \overline{\mathsf{e}_t(\supp(\eeta))}.\]
    In particular, by \eqref{pr-mut} we get that $\mathsf{e}_t(\supp(\eeta))$ is finite, so that it is closed and the conclusion follows.\\
    \quad \\
    \textit{Claim 2}: if $\gamma^1, \gamma^2 \in \supp(\eeta)$ and $\bar{s}\ge0$ are such that $\gamma^1(\bar{s})=\gamma^2(\bar{s})$, then $\gamma^1(t) = \gamma^2(t)$ for every $t \ge \bar{s}$.\\
    \textit{Proof of claim 2}: suppose by contradiction this is not the case and set
    \[ \bar{t}:= \max \left \{  s \ge \bar{s}: \gamma^1(r) = \gamma^2(r) \,\, \text{ for every } \bar{s} \le r \le s \right \} \in [\bar{s},+\infty).\]
    Then, by continuity of $\gamma^1,\gamma^2$, for every $\delta>0$ we can find $t_\delta \in (\bar{t}, \bar{t}+\delta)$ such that $\gamma^1(t_\delta) \ne \gamma^2(t_\delta)$. Let $K := \#(\supp(\mu_{\bar{t}})) \in \{1, \dots, N\}$. If $K=1$, noticing that
    \[\left\{\gamma^1(t_1),\gamma^2(t_1)\right\}\subset\supp(\mu_{t_1}),\]
    then $\#(\supp(\mu_{t_1})) \ge 2 >1$ and we get a contradiction with \eqref{pr-mut}. If $K>1$, then we can find $\tilde{\gamma}_1, \dots, \tilde{\gamma}_{K-1} \in \supp(\eeta)$ such that $\tilde{\gamma}_i(\bar{t}) \ne \tilde{\gamma}_j(\bar{t}) \ne \gamma^1(\bar{t})=\gamma^2(\bar{t})$ for every $i \ne j$. By continuity we can find $\eps_k>0$, $k=1,2$, such that 
    \[ \tilde{\gamma}_i(t) \ne \tilde{\gamma}_j(t) \ne \gamma^k(t) \text{ for every } i \ne j,\,k=1,2,\,\, t \in [\bar{t}, \bar{t}+\eps_k).\]
    Thus, if we set $\eps:= \eps_1 \wedge \eps_2$, we have that 
    \[\left\{\tilde{\gamma}_1(t_\eps),\dots,\tilde{\gamma}_{K-1}(t_\eps),\gamma^1(t_\eps),\gamma^2(t_\eps)\right\}\subset\supp(\mu_{t_\eps}),\]
    hence $\#(\supp(\mu_{t_\eps})) \ge K+1 >K$, a contradiction with \eqref{pr-mut}.
    \\
    \quad \\
    \textit{Claim 3}: we have $\#(\supp(\eeta))=N$.\\
    \textit{Proof of claim 3}: 
    from Claim 1 applied with $t=0$, it follows 
    \begin{equation}\label{eq:suppeta}
    \supp(\eeta) = \bigcup_{i=1}^N \left \{ \gamma \in \supp(\eeta) : \gamma(0)=x_i \right \} := \bigcup_{i=1}^N A_i.
    \end{equation}
    However, Claim 2 applied with $\bar{s}=0$ yields that each $A_i$ is a singleton. This concludes the proof of the claim.\\
    \quad \\
    \textit{Claim 4}: we have
    \[\eeta=\sum_{i=1}^N a_i\delta_{\gamma_i},\]
for curves $\gamma_i\in\rmC([0,+\infty);\X)$ satisfying properties \ref{P1},\ref{P2},\ref{P3}.\\
    \textit{Proof of claim 4}: by Claim 3, there exist $\gamma_i\in\rmC([0,+\infty);\X)$, $i=1,\dots,N$, satisfying \ref{P1} and $\tilde a_i>0$, with $\sum_{i=1}^N\tilde a_i=1$, such that
    \[\eeta=\sum_{i=1}^N \tilde a_i\delta_{\gamma_i}.\]
    From Claim 1 (cf. also \eqref{eq:suppeta}), we get \ref{P2}, i.e. $\gamma_i(0)=x_i$ for any $i=1,\dots,N$; while \ref{P3} comes from Claim 2.
    In particular, since
    \[\sum_{i=1}^N\tilde a_i\delta_{\gamma_i(0)}=(\mathsf{e}_0)_\sharp\eeta=\bar\mu=\sum_{i=1}^N a_i\delta_{x_i},\]
    we deduce that $\tilde a_i=a_i$ for any $i=1,\dots,N$.\\
    \quad \\
    \textit{Claim 5}: if $\eeta_1,\eeta_2\in \prob(\rmC([0,+\infty);\X))$ are such that $(\mathsf{e}_t)_\sharp \eeta_1=(\mathsf{e}_t)_\sharp \eeta_2=\mu_t$ for every $t\ge 0$, then $\eeta_1=\eeta_2$.\\
    \textit{Proof of claim 5}: by Claim 4, we have
\[\eeta_1=\sum_{i=1}^N  a_i\delta_{\gamma^1_i},\qquad\eeta_2=\sum_{i=1}^N  a_i\delta_{\gamma^2_i},\]
with $\gamma^1_i,\gamma^2_i\in\rmC([0,+\infty);\X)$ satisfying properties \ref{P1},\ref{P2},\ref{P3}. Let $\eeta:=\frac{1}{2}(\eeta_1+\eeta_2)$ and notice that $\eeta \in \prob(\rmC([0,+\infty); \X))$ and $(\mathsf{e}_t)_\sharp \eeta= \mu_t $ for every $t \ge 0$. By construction we have
\[ \{ \gamma_i^1, \gamma_i^2\}_{i=1}^N \subset \supp(\eeta);\]
however by Claim 3 we have $\#(\supp(\eeta))=N$, so that it must be that $\gamma_i^1=\gamma_i^2$ for every $i=1, \dots, N$.
\end{proof}

\end{document}